\algrenewcommand\Return{\State \algorithmicreturn{} }
\newcommand{\bE}{\mathbb{E}}
\newcommand{\bP}{\mathbb{P}}
\newcommand{\R}{\mathbb{R}}
\newcommand{\cN}{\mathcal{N}}
\newcommand{\cP}{\mathcal{P}}
\newcommand{\N}{\mathcal{N}}
\newcommand{\E}{\mathcal{E}}
\newcommand{\cE}{\mathcal{E}}
\newcommand{\G}{\mathcal{G}}
\renewcommand{\P}{\mathcal{P}}
\newcommand{\ones}{\mathbbm{1}}
\newcommand{\one}{\mathbf{1}}
\newcommand{\eff}{{\rm eff}}
\newcommand{\cCeff}{\mathcal{C}_{\rm eff}}
\renewcommand{\hom}{\text{hom}}
\newcommand{\Ga}{\Gamma}
\newcommand{\ga}{\gamma}
\newcommand{\si}{\sigma}
\newcommand{\etol}{\epsilon_{\text{tol}}}
\DeclareMathOperator*{\argmax}{arg\,max}
\DeclareMathOperator{\supp}{supp}
\DeclareMathOperator{\Mod}{Mod}
\DeclareMathOperator{\MEO}{MEO}
\DeclareMathOperator{\Var}{Var}
\DeclareMathOperator{\Adm}{Adm}
\newtheorem{theorem}{Theorem}[section]
\newtheorem{corollary}[theorem]{Corollary}
\newtheorem{lemma}[theorem]{Lemma}
\theoremstyle{definition}
\newtheorem{definition}[theorem]{Definition}
\newtheorem{example}[theorem]{Example}
\theoremstyle{remark}
\newtheorem{remark}{Remark}[section]
\numberwithin{equation}{section}
\begin{document}

\title{Fairest edge usage and minimum expected overlap for random spanning trees}
\date{May 24, 2018}

\author{Nathan Albin$^1$}
\author{Jason Clemens$^2$}
\author{Derek Hoare}
\author{Pietro Poggi-Corradini$^1$}
\author{Brandon Sit}
\author{Sarah Tymochko}

\address{$^1$Kansas State University, Department of Mathematics, 138 Cardwell Hall, Manhattan, KS 66506}
\address{$^2$Division of Math \& Science, Missouri Valley College, Science Center, 107, Marshall, Missouri 65340}
\email[Nathan Albin]{albin@math.ksu.edu}
\email[Jason Clemens]{clemensj@moval.edu}
\email[Derek Hoare]{hoared@kenyon.edu}
\email[Pietro Poggi-Corradini]{pietro@math.ksu.edu}
\email[Brandon Sit]{sit18@up.edu}
\email[Sarah Tymochko]{sjtymo17@g.holycross.edu}
\thanks{This material is based upon work supported by the National Science Foundation under Grant Nos.~1515810 and~1659123.}

\begin{abstract}
Random spanning trees of a graph $G$ are governed by a corresponding probability mass distribution (or ``law''), $\mu$, defined on the set of all spanning trees of $G$. This paper addresses the problem of choosing $\mu$ in order to utilize the edges as ``fairly'' as possible.  This turns out to be equivalent to minimizing, with respect to $\mu$, the expected overlap of two independent random spanning trees sampled with law $\mu$.  In the process, we introduce the notion of homogeneous graphs. These are graphs for which it is possible to choose a random spanning tree so that all edges have equal usage probability.  The main result is a deflation process that identifies a hierarchical structure of arbitrary graphs in terms of homogeneous subgraphs, which we call homogeneous cores.  A key tool in the analysis is the spanning tree modulus, for which there exists an algorithm based on minimum spanning tree algorithms, such as Kruskal's or Prim's.
\end{abstract}

\maketitle

\section{Introduction}

Throughout this paper, $G=(V,E)$ will be a finite, connected multigraph with vertex set $V$ and edge set $E$.  Although we allow parallel edges, we do not allow self loops.  Often, $G$ will simply be called a ``graph'' for short.  In most of the paper, we consider unweighted graphs; notable exceptions are Sections~\ref{sec:fulkerson-duality},~\ref{sec:modulus} and~\ref{sec:weighted-graphs}.  In these sections, we will use the notation $G=(V,E,\si)$ to indicate that each edge $e\in E$ has a positive weight $\si(e)$ attached to it.

\subsection{Random spanning trees}
Let $\Ga_G$ be the set of all spanning trees of a graph $G$. Kirchhoff's matrix tree theorem shows that the cardinality of $\Ga_G$ can be computed by taking the product of all the non-zero eigenvalues of the combinatorial Laplacian matrix and dividing by $|V|$. This allows for a precise definition of {\it uniform spanning trees}, namely random spanning trees with all possibilities equally likely. Another well-known result of Kirchhoff states that the probability that a given edge belongs to a uniform spanning tree is equal to the effective resistance of that edge, when thinking of the graph as an electrical network with unit edge conductances.

 In this paper, we are interested in more general random spanning trees. To help with the terminology, whenever $\mu$ is a probability mass function (or pmf) on $\Ga_G$, we will write $\mu\in\cP(\Ga_G)$ and we will say that a random spanning tree $\underline{\ga}$ is $\mu$-random or ``has law $\mu$'' (and write $\underline{\ga}\sim \mu$), if
\begin{equation}
  \label{eq:prob-mu}
\bP_\mu(\underline{\ga}=\ga)=\mu(\ga)\qquad\forall\ga\in\Ga_G.
\end{equation}
We use the underline to distinguish the random object $\underline{\ga}$ from its possible values $\ga$.

\subsection{The minimum expected overlap problem}
To begin the discussion, we pose the following problem.  How can we
minimize the expected overlap of two independent random spanning trees with the same law? In other words, given $\mu\in\P(\Gamma_G)$, we consider two independent, identically
distributed random spanning trees
$\underline{\gamma},\underline{\gamma}'\sim\mu$.  The size of the
intersection between these two trees,
$|\underline{\gamma}\cap\underline{\gamma}'|$, is an
integer-valued random variable whose expectation is computed as follows.
\begin{equation*}
  \mathbb{E}_{\mu}|\underline{\gamma}\cap\underline{\gamma}'|
  := \sum_{\gamma,\gamma'\in\Gamma_G}|\gamma\cap\gamma'|\mu(\gamma)\mu(\gamma').
\end{equation*}
The \emph{minimum expected overlap} (MEO) problem is the problem of finding
a law $\mu$ that minimizes this expected overlap:
\begin{equation}
  \label{eq:min-overlap}
  \begin{split}
    \text{minimize}\quad&
    \mathbb{E}_{\mu}|\underline{\gamma}\cap\underline{\gamma}'|\\
    \text{subject to}\quad&\mu\in\P(\Gamma_G).
  \end{split}
\end{equation}
In other words, we seek a law for which iid random trees
``collide'' as little as possible on average.

\subsection{The fairest edge usage problem}

When a law $\mu\in\cP(\Ga_G)$ is given, we can measure the \emph{edge usage probability} (or {\it expectation}) of an edge $e\in E$, which we write as
\begin{equation}\label{eq:edge-usage-prob}
  \mathbb{P}_\mu(e\in\underline{\gamma}) :=
  \sum_{\gamma\in\Gamma_G}\mathbbm{1}_{\{e\in\gamma\}}\mu(\gamma),\qquad
  \text{where}\qquad
  \mathbbm{1}_{\{e\in\gamma\}} =
  \begin{cases}
    1 & \text{if } e\in\gamma,\\
    0 & \text{if } e\notin\gamma.
  \end{cases}
\end{equation}
This evaluates the likelihood that a particular edge $e\in E$ is used
in the random tree $\underline{\gamma}\sim\mu$.

The optimal laws $\mu^*$ for $\MEO(\Ga_G)$ are not in general unique. However, the corresponding edge usage probabilities $\eta^*(e)=\bP_{\mu^*}(e\in\underline{\ga})$ do not depend on the optimal law $\mu^*$ and, as we will see in Theorem \ref{thm:variance}, they uniquely minimize the variance of edge usage probabilities. In other words, the edge usage probabilities of any optimal pmf for $\MEO(\Ga_G)$ are given by the unique solution to the following problem:
 \begin{equation}
    \label{eq:min-variance}
    \begin{split}
      \text{minimize}\quad &
      \Var(\eta)\\
      \text{subject to}\quad & \eta(e) =
      \mathbb{P}_{\mu}\left(e\in\underline{\gamma}\right)\;
      \forall e\in E\\
      & \mu\in\P(\Gamma_G).
    \end{split}
  \end{equation}
We call this problem the \emph{fairest edge usage} (FEU) problem.

The set of optimal laws $\mu^*$ induces a natural partition of $\Gamma_G$ into two sets, which we call the \emph{fair} and \emph{forbidden} trees of $G$.  
\begin{definition}\label{def:fair-forbidden}
  Given a graph $G=(V,E)$ and its family of spanning
  trees $\Ga_G$, a spanning tree $\ga\in\Ga_G$ is called a
  \emph{fair tree} if there exists an optimal pmf $\mu^*$ such that
  $\mu^*(\ga)>0$.  The set of all fair spanning trees for $G$ is
  denoted $\Ga_G^f$.  A tree $\ga\in\Ga_G\setminus\Ga_G^f$ (if such a
  tree exists) is called a \emph{forbidden tree}.
\end{definition}
In other words, the fair trees are those trees that occur in the support of \emph{some} optimal $\mu^*$, while the forbidden trees cannot occur in the support of \emph{any} optimal $\mu^*$.  Thus, the forbidden trees of $G$ do not contribute to the value of $\eta^*$.

\subsection{Homogeneous cores and the deflation process}

In view of the variance minimization in the FEU problem (\ref{eq:min-variance}), it is natural to consider the class of graphs for which it is possible to make the variance in expected edge usage zero. We call such graphs {\it homogeneous graphs}. One of our main results, Theorem~\ref{thm:component}, can be summarized as follows.
\begin{theorem}
Let $G=(V,E)$ be a finite, connected multigraph with no self loops. Then, $G$ admits a  homogeneous core $H$, i.e., $G$ admits a connected vertex-induced subgraph $H$ with at least one edge and with the property that every fair tree $\ga\in\Ga_G$  restricts to a spanning tree of $H$.
\end{theorem}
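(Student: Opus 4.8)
The plan is to pass from the combinatorial problem to the geometry of the optimal edge‑usage vector, and then to exploit the fact that the associated optimal trees are exactly minimum spanning trees. Write $\eta^*\in\R^E$ for the common optimal edge usage of the FEU and MEO problems (unique, by Theorem~\ref{thm:variance}). Since $|\underline\ga\cap\underline\ga'|=\sum_{e\in E}\ones_{\{e\in\underline\ga\}}\ones_{\{e\in\underline\ga'\}}$ for independent copies, the MEO objective equals $\sum_{e\in E}\bP_\mu(e\in\underline\ga)^2=\|\eta_\mu\|_2^2$, where $\eta_\mu(e)=\bP_\mu(e\in\underline\ga)$; so MEO is the minimization of $\mu\mapsto\|N^\top\mu\|_2^2$ over the simplex $\cP(\Ga_G)$, with $N$ the tree--edge incidence matrix, and $\sum_e\eta_\mu(e)=|V|-1$ is fixed.

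The first key step is the identity: every fair tree is a minimum spanning tree of $G$ for the edge weights $\eta^*$. Indeed, the MEO objective is convex and quadratic, so first‑order optimality over the simplex says that for any optimal $\mu^*$ the quantity $\sum_{e\in\ga}\eta^*(e)$ is constant on $\supp(\mu^*)$ and no smaller elsewhere; averaging this against $\mu^*$ identifies the constant as $\sum_e\eta^*(e)^2=\|\eta^*\|_2^2=\min_{\ga\in\Ga_G}\sum_{e\in\ga}\eta^*(e)$. Hence $\supp(\mu^*)$ — and therefore, since $\eta^*$ does not depend on the choice of optimal $\mu^*$, the whole of $\Ga_G^f$ — lies inside the set of $\eta^*$‑minimum spanning trees. (Equivalently, one can phrase this through Fulkerson duality, with the spanning tree $2$‑modulus density $\rho^*$ proportional to $\eta^*$; either way this is where the modulus machinery / the MST algorithms enter.)

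Next I would isolate the ``lowest level.'' Let $w_1=\min_{e\in E}\eta^*(e)$ and $E_1=\{e:\eta^*(e)=w_1\}\ne\varnothing$, and look at the subgraph $(V,E_1)$. The structural claim is that $E_1$ is \emph{pure}: any edge of $G$ whose endpoints lie in a common connected component of $(V,E_1)$ already belongs to $E_1$. Here the minimum spanning tree viewpoint does the work — an edge $f$ with $\eta^*(f)>w_1$ whose endpoints were already joined inside $(V,E_1)$ would, in Kruskal's algorithm run on the weights $\eta^*$, always be examined only after its endpoints had been merged by strictly lighter edges (independently of tie‑breaking), so $f$ would lie in no minimum spanning tree, hence in no fair tree, forcing $\eta^*(f)=0$; as $\eta^*(f)>w_1\ge 0$ this is absurd. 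Picking any connected component $C$ of $(V,E_1)$ that contains an edge (one exists since $E_1\ne\varnothing$) and letting $H$ be the vertex‑induced subgraph of $G$ on $V(C)$, purity gives $H=C$: it is connected, has at least one edge, and $\eta^*\equiv w_1$ on $E(H)$.

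Finally I would verify that every fair tree restricts to a spanning tree of $H$. By the standard matroid/Kruskal fact, any $\eta^*$‑minimum spanning tree $\ga$ satisfies $|\ga\cap E_1|=|V|-\#\{\text{components of }(V,E_1)\}$; being a forest contained in $E_1$ with exactly that many components, $\ga\cap E_1$ must meet each component of $(V,E_1)$ in a spanning tree of it, so in particular $\ga\cap E(H)$ is a spanning tree of $H$. Since $\Ga_G^f$ is contained in the $\eta^*$‑minimum spanning trees, this proves $H$ is a homogeneous core; and pushing $\mu^*$ forward along $\ga\mapsto\ga\cap E(H)$ produces a law on $\Ga_H$ with constant edge usage $w_1$, so $H$ is genuinely a homogeneous graph, which justifies the name. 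The only delicate points are the complementary‑slackness step identifying fair trees with $\eta^*$‑minimum spanning trees (it relies on uniqueness of $\eta^*$ and on handling the simplex constraint correctly) and the purity claim, which must be phrased so that it remains valid even when $w_1=0$; everything afterward is bookkeeping with Kruskal's algorithm.
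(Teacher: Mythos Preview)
Your proof is correct and follows essentially the same approach as the paper's Theorem~\ref{thm:component}: take a connected component $H$ of the minimum level set of $\eta^*$, observe that fair trees are $\eta^*$-minimum spanning trees (you via first-order optimality on the simplex, the paper via the complementary slackness condition~\eqref{eq:comp-slack-kkt}), and then use MST structure to establish vertex-inducedness and the restriction property. The only differences are cosmetic---you prove ``purity'' (vertex-inducedness) before the restriction property rather than after, and you use the Kruskal counting identity $|\gamma\cap E_1|=|V|-\#\{\text{components of }(V,E_1)\}$ for restriction while the paper uses a direct edge-swap---but these are standard and interchangeable MST arguments.
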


We then use this theorem to develop a deflation process, which identifies a homogeneous core, shrinks it to a single vertex, and then repeats. This process reveals an interesting hierarchical structure of general graphs.  Our second main result, Theorem~\ref{thm:serial-spt}, can be summarized as follows.
\begin{theorem}
Let $G$ be a graph with homogeneous core $H\subsetneq G$, and let $G/H$ be the result of shrinking $H$ to a single node and pruning away any self-loops. Then a serial rule holds for the $\MEO$ problem; namely, the minimum expected overlap on $G$ is the sum of the corresponding minimum expected overlaps on $H$ and $G/H$. Moreover, any optimal pmf for $\MEO(\Ga_G)$ can be constructed by coupling any two optimal pmfs for $\MEO(\Ga_H)$ and $\MEO(\Ga_{G/H})$ respectively.
\end{theorem}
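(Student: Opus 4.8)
The plan is to exploit the product decomposition of spanning trees under contraction together with the homogeneous-core theorem. Since $H$ is vertex-induced it already contains every edge of $G$ with both endpoints in $V(H)$, so contracting $H$ creates no self-loops and $E(G/H)=E(G)\setminus E(H)$, with every edge of $G/H$ literally an edge of $G$. With this identification the classical contraction bijection $\Phi(T_H,T_{G/H})=T_H\cup T_{G/H}$ is a bijection from $\Ga_H\times\Ga_{G/H}$ onto the set of spanning trees $\ga\in\Ga_G$ for which $\ga\cap E(H)$ is a spanning tree of $H$. Writing $\ga=\ga_H\sqcup\ga_{G/H}$ for this decomposition, and noting $\ga_H\subseteq E(H)$ while $\ga'_{G/H}\subseteq E(G)\setminus E(H)$, a one-line computation of $\ga\cap\ga'$ yields the additivity identity $|\ga\cap\ga'|=|\ga_H\cap\ga'_H|+|\ga_{G/H}\cap\ga'_{G/H}|$ for any two trees of this form.

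Next I would record the key probabilistic observation. Let $\mu$ be any coupling of laws $\mu_H\in\cP(\Ga_H)$ and $\mu_{G/H}\in\cP(\Ga_{G/H})$, i.e.\ a law on $\Ga_H\times\Ga_{G/H}$ with those marginals, regarded through $\Phi$ as a law on $\Ga_G$ supported on the product set. For iid $\underline{\ga},\underline{\ga}'\sim\mu$ the $H$-components $\underline{\ga}_H,\underline{\ga}'_H$ are iid with law $\mu_H$ and the $G/H$-components are iid with law $\mu_{G/H}$, so taking expectations in the additivity identity gives
$$\bE_\mu|\underline{\ga}\cap\underline{\ga}'| = \bE_{\mu_H}|\underline{\ga}_H\cap\underline{\ga}'_H| + \bE_{\mu_{G/H}}|\underline{\ga}_{G/H}\cap\underline{\ga}'_{G/H}|.$$
Thus the expected overlap depends only on the two marginals, never on how they are coupled inside a single draw. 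Choosing $\mu_H$ and $\mu_{G/H}$ optimal for their respective MEO problems and $\mu$ any coupling (e.g.\ the product) immediately yields $\MEO(\Ga_G)\le\MEO(\Ga_H)+\MEO(\Ga_{G/H})$, and shows every such coupling attains this value.

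For the reverse inequality and the optimality transfer I would invoke the homogeneous-core theorem. Any optimal $\mu^*$ for $\MEO(\Ga_G)$ is supported on fair trees (by the definition of fair tree), and by the theorem every fair tree $\ga$ restricts to a spanning tree of $H$; an edge count ($|\ga|-|\ga\cap E(H)|=|V(G/H)|-1$) together with connectivity of the contracted image shows $\ga\setminus E(H)$ is a spanning tree of $G/H$. Hence $\mu^*$ is supported on $\Phi(\Ga_H\times\Ga_{G/H})$, so it is a coupling of its marginals $\mu_H^*,\mu_{G/H}^*$, and the displayed identity gives
$$\MEO(\Ga_G)=\bE_{\mu_H^*}|\underline{\ga}_H\cap\underline{\ga}'_H| + \bE_{\mu_{G/H}^*}|\underline{\ga}_{G/H}\cap\underline{\ga}'_{G/H}| \ge \MEO(\Ga_H)+\MEO(\Ga_{G/H}).$$
Combining with the upper bound proves the serial rule $\MEO(\Ga_G)=\MEO(\Ga_H)+\MEO(\Ga_{G/H})$, and forces equality in the last display, i.e.\ $\mu_H^*$ and $\mu_{G/H}^*$ are optimal for $\MEO(\Ga_H)$ and $\MEO(\Ga_{G/H})$. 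So every optimal $\mu^*$ is a coupling of two optimal laws, and by the previous paragraph every such coupling is optimal; this is the ``moreover'' claim.

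The step I expect to be the main obstacle is the bookkeeping around the contraction and the support: verifying that $\Phi$ is a genuine bijection onto the stated set, that no self-loops are created (which is where vertex-inducedness of $H$ is used), and---most delicately---that the support of \emph{every} optimal law, not merely the union of all supports, lies inside $\Phi(\Ga_H\times\Ga_{G/H})$. This last point is exactly where the homogeneous-core theorem enters, and it requires care about the distinction between a fair tree and a tree in the support of one particular optimal $\mu^*$. Once that is in place, the purely probabilistic content---additivity of overlap and its dependence only on the marginals---is routine.
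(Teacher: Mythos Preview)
Your proposal is correct and follows essentially the same route as the paper. The paper factors the argument through a general serial rule (Theorem~\ref{thm:serial}) for any partition that \emph{divides} a family, then separately establishes the bijection $\psi_2(\Gamma^*)\cong\Gamma_{G/H}$ (Lemma~\ref{lem:phi-bijection}) and the restriction property (Theorem~\ref{thm:component}); you do the same three pieces in one pass specialized to spanning trees, but the mathematical content---additivity of overlap, dependence of expected overlap only on the marginals, upper bound via product coupling, lower bound via the restriction property of a homogeneous core---is identical. Your self-flagged worry at the end is not actually an issue: the support of any single optimal $\mu^*$ is by definition a subset of the fair trees, so the restriction property for fair trees already handles every optimal law, not just the union of supports.
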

Consider, for example, the graph in Figure~\ref{fig:deflate}(a). Here, the optimal edge usage
probability function $\eta^*(e)=\mathbb{P}_{\mu^*}(e\in\underline{\gamma})$ takes
three distinct values, indicated by the edge styles as described in
the figure caption.
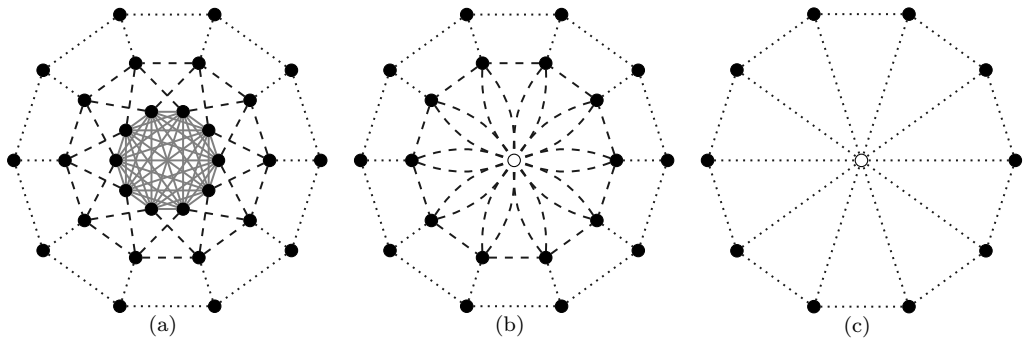
\begin{figure}
  \centering
  \subfigure[]{
    \begin{tikzpicture}[baseline,scale=0.68]
      \tikzstyle{node} = [draw,shape=circle,fill=black,scale=0.5];
      \tikzstyle{comp0} = [draw=black!50,line width=0.75pt];
      \tikzstyle{comp1} = [draw=black!90,line width=0.75pt,dashed];
      \tikzstyle{comp2} = [draw=black!90,line width=0.75pt,dotted];
      \node[node] (0) at (1.0000,0.0000) {};
      \node[node] (1) at (0.8090,0.5878) {};
      \node[node] (2) at (0.3090,0.9511) {};
      \node[node] (3) at (-0.3090,0.9511) {};
      \node[node] (4) at (-0.8090,0.5878) {};
      \node[node] (5) at (-1.0000,0.0000) {};
      \node[node] (6) at (-0.8090,-0.5878) {};
      \node[node] (7) at (-0.3090,-0.9511) {};
      \node[node] (8) at (0.3090,-0.9511) {};
      \node[node] (9) at (0.8090,-0.5878) {};
      \node[node] (10) at (2.0000,0.0000) {};
      \node[node] (11) at (1.6180,1.1756) {};
      \node[node] (12) at (0.6180,1.9021) {};
      \node[node] (13) at (-0.6180,1.9021) {};
      \node[node] (14) at (-1.6180,1.1756) {};
      \node[node] (15) at (-2.0000,0.0000) {};
      \node[node] (16) at (-1.6180,-1.1756) {};
      \node[node] (17) at (-0.6180,-1.9021) {};
      \node[node] (18) at (0.6180,-1.9021) {};
      \node[node] (19) at (1.6180,-1.1756) {};
      \node[node] (20) at (3.0000,0.0000) {};
      \node[node] (21) at (2.4271,1.7634) {};
      \node[node] (22) at (0.9271,2.8532) {};
      \node[node] (23) at (-0.9271,2.8532) {};
      \node[node] (24) at (-2.4271,1.7634) {};
      \node[node] (25) at (-3.0000,0.0000) {};
      \node[node] (26) at (-2.4271,-1.7634) {};
      \node[node] (27) at (-0.9271,-2.8532) {};
      \node[node] (28) at (0.9271,-2.8532) {};
      \node[node] (29) at (2.4271,-1.7634) {};
      \path[comp0] (0) -- (1);
      \path[comp0] (0) -- (2);
      \path[comp0] (0) -- (3);
      \path[comp0] (0) -- (4);
      \path[comp0] (0) -- (5);
      \path[comp0] (0) -- (6);
      \path[comp0] (0) -- (7);
      \path[comp0] (0) -- (8);
      \path[comp0] (0) -- (9);
      \path[comp1] (0) -- (11);
      \path[comp1] (0) -- (19);
      \path[comp0] (1) -- (2);
      \path[comp0] (1) -- (3);
      \path[comp0] (1) -- (4);
      \path[comp0] (1) -- (5);
      \path[comp0] (1) -- (6);
      \path[comp0] (1) -- (7);
      \path[comp0] (1) -- (8);
      \path[comp0] (1) -- (9);
      \path[comp1] (1) -- (10);
      \path[comp1] (1) -- (12);
      \path[comp0] (2) -- (3);
      \path[comp0] (2) -- (4);
      \path[comp0] (2) -- (5);
      \path[comp0] (2) -- (6);
      \path[comp0] (2) -- (7);
      \path[comp0] (2) -- (8);
      \path[comp0] (2) -- (9);
      \path[comp1] (2) -- (11);
      \path[comp1] (2) -- (13);
      \path[comp0] (3) -- (4);
      \path[comp0] (3) -- (5);
      \path[comp0] (3) -- (6);
      \path[comp0] (3) -- (7);
      \path[comp0] (3) -- (8);
      \path[comp0] (3) -- (9);
      \path[comp1] (3) -- (12);
      \path[comp1] (3) -- (14);
      \path[comp0] (4) -- (5);
      \path[comp0] (4) -- (6);
      \path[comp0] (4) -- (7);
      \path[comp0] (4) -- (8);
      \path[comp0] (4) -- (9);
      \path[comp1] (4) -- (13);
      \path[comp1] (4) -- (15);
      \path[comp0] (5) -- (6);
      \path[comp0] (5) -- (7);
      \path[comp0] (5) -- (8);
      \path[comp0] (5) -- (9);
      \path[comp1] (5) -- (14);
      \path[comp1] (5) -- (16);
      \path[comp0] (6) -- (7);
      \path[comp0] (6) -- (8);
      \path[comp0] (6) -- (9);
      \path[comp1] (6) -- (15);
      \path[comp1] (6) -- (17);
      \path[comp0] (7) -- (8);
      \path[comp0] (7) -- (9);
      \path[comp1] (7) -- (16);
      \path[comp1] (7) -- (18);
      \path[comp0] (8) -- (9);
      \path[comp1] (8) -- (17);
      \path[comp1] (8) -- (19);
      \path[comp1] (9) -- (10);
      \path[comp1] (9) -- (18);
      \path[comp1] (10) -- (19);
      \path[comp1] (10) -- (11);
      \path[comp2] (10) -- (20);
      \path[comp1] (11) -- (12);
      \path[comp2] (11) -- (21);
      \path[comp1] (12) -- (13);
      \path[comp2] (12) -- (22);
      \path[comp1] (13) -- (14);
      \path[comp2] (13) -- (23);
      \path[comp1] (14) -- (15);
      \path[comp2] (14) -- (24);
      \path[comp1] (15) -- (16);
      \path[comp2] (15) -- (25);
      \path[comp1] (16) -- (17);
      \path[comp2] (16) -- (26);
      \path[comp1] (17) -- (18);
      \path[comp2] (17) -- (27);
      \path[comp1] (18) -- (19);
      \path[comp2] (18) -- (28);
      \path[comp2] (19) -- (29);
      \path[comp2] (20) -- (21);
      \path[comp2] (20) -- (29);
      \path[comp2] (21) -- (22);
      \path[comp2] (22) -- (23);
      \path[comp2] (23) -- (24);
      \path[comp2] (24) -- (25);
      \path[comp2] (25) -- (26);
      \path[comp2] (26) -- (27);
      \path[comp2] (27) -- (28);
      \path[comp2] (28) -- (29);
    \end{tikzpicture}
  }
  \subfigure[]{
    \begin{tikzpicture}[baseline,scale=0.68]
      \tikzstyle{node} = [draw,shape=circle,fill=black,scale=0.5];
      \tikzstyle{vnode} = [draw,shape=circle,fill=white,scale=0.5];
      \tikzstyle{comp1} = [draw=black!90,line width=0.75pt,dashed];
      \tikzstyle{comp2} = [draw=black!90,line width=0.75pt,dotted];
      \node[node] (10) at (2.0000,0.0000) {};
      \node[node] (11) at (1.6180,1.1756) {};
      \node[node] (12) at (0.6180,1.9021) {};
      \node[node] (13) at (-0.6180,1.9021) {};
      \node[node] (14) at (-1.6180,1.1756) {};
      \node[node] (15) at (-2.0000,0.0000) {};
      \node[node] (16) at (-1.6180,-1.1756) {};
      \node[node] (17) at (-0.6180,-1.9021) {};
      \node[node] (18) at (0.6180,-1.9021) {};
      \node[node] (19) at (1.6180,-1.1756) {};
      \node[node] (20) at (3.0000,0.0000) {};
      \node[node] (21) at (2.4271,1.7634) {};
      \node[node] (22) at (0.9271,2.8532) {};
      \node[node] (23) at (-0.9271,2.8532) {};
      \node[node] (24) at (-2.4271,1.7634) {};
      \node[node] (25) at (-3.0000,0.0000) {};
      \node[node] (26) at (-2.4271,-1.7634) {};
      \node[node] (27) at (-0.9271,-2.8532) {};
      \node[node] (28) at (0.9271,-2.8532) {};
      \node[node] (29) at (2.4271,-1.7634) {};
      \node[vnode] (v1) at (0,0) {};
      \path[comp1] (10) -- (19);
      \path[comp1] (10) -- (11);
      \path[comp2] (10) -- (20);
      \path[comp1] (11) -- (12);
      \path[comp2] (11) -- (21);
      \path[comp1] (12) -- (13);
      \path[comp2] (12) -- (22);
      \path[comp1] (13) -- (14);
      \path[comp2] (13) -- (23);
      \path[comp1] (14) -- (15);
      \path[comp2] (14) -- (24);
      \path[comp1] (15) -- (16);
      \path[comp2] (15) -- (25);
      \path[comp1] (16) -- (17);
      \path[comp2] (16) -- (26);
      \path[comp1] (17) -- (18);
      \path[comp2] (17) -- (27);
      \path[comp1] (18) -- (19);
      \path[comp2] (18) -- (28);
      \path[comp2] (19) -- (29);
      \path[comp2] (20) -- (21);
      \path[comp2] (20) -- (29);
      \path[comp2] (21) -- (22);
      \path[comp2] (22) -- (23);
      \path[comp2] (23) -- (24);
      \path[comp2] (24) -- (25);
      \path[comp2] (25) -- (26);
      \path[comp2] (26) -- (27);
      \path[comp2] (27) -- (28);
      \path[comp2] (28) -- (29);
      \path[comp1] (10) edge [bend left=20] (v1);
      \path[comp1] (10) edge [bend right=20] (v1);
      \path[comp1] (11) edge [bend left=20] (v1);
      \path[comp1] (11) edge [bend right=20] (v1);
      \path[comp1] (12) edge [bend left=20] (v1);
      \path[comp1] (12) edge [bend right=20] (v1);
      \path[comp1] (13) edge [bend left=20] (v1);
      \path[comp1] (13) edge [bend right=20] (v1);
      \path[comp1] (14) edge [bend left=20] (v1);
      \path[comp1] (14) edge [bend right=20] (v1);
      \path[comp1] (15) edge [bend left=20] (v1);
      \path[comp1] (15) edge [bend right=20] (v1);
      \path[comp1] (16) edge [bend left=20] (v1);
      \path[comp1] (16) edge [bend right=20] (v1);
      \path[comp1] (17) edge [bend left=20] (v1);
      \path[comp1] (17) edge [bend right=20] (v1);
      \path[comp1] (18) edge [bend left=20] (v1);
      \path[comp1] (18) edge [bend right=20] (v1);
      \path[comp1] (19) edge [bend left=20] (v1);
      \path[comp1] (19) edge [bend right=20] (v1);
    \end{tikzpicture}
    }
    \subfigure[]{
    \begin{tikzpicture}[baseline,scale=0.682]
      \tikzstyle{node} = [draw,shape=circle,fill=black,scale=0.5];
      \tikzstyle{vnode} = [draw,shape=circle,fill=white,scale=0.5];
      \tikzstyle{comp2} = [draw=black!90,line width=0.75pt,dotted];
      \node[node] (20) at (3.0000,0.0000) {};
      \node[node] (21) at (2.4271,1.7634) {};
      \node[node] (22) at (0.9271,2.8532) {};
      \node[node] (23) at (-0.9271,2.8532) {};
      \node[node] (24) at (-2.4271,1.7634) {};
      \node[node] (25) at (-3.0000,0.0000) {};
      \node[node] (26) at (-2.4271,-1.7634) {};
      \node[node] (27) at (-0.9271,-2.8532) {};
      \node[node] (28) at (0.9271,-2.8532) {};
      \node[node] (29) at (2.4271,-1.7634) {};
      \node[vnode] (v1) at (0,0) {};
      \path[comp2] (20) -- (21);
      \path[comp2] (20) -- (29);
      \path[comp2] (21) -- (22);
      \path[comp2] (22) -- (23);
      \path[comp2] (23) -- (24);
      \path[comp2] (24) -- (25);
      \path[comp2] (25) -- (26);
      \path[comp2] (26) -- (27);
      \path[comp2] (27) -- (28);
      \path[comp2] (28) -- (29);
      \path[comp2] (20) -- (v1);
      \path[comp2] (21) -- (v1);
      \path[comp2] (22) -- (v1);
      \path[comp2] (23) -- (v1);
      \path[comp2] (24) -- (v1);
      \path[comp2] (25) -- (v1);
      \path[comp2] (26) -- (v1);
      \path[comp2] (27) -- (v1);
      \path[comp2] (28) -- (v1);
      \path[comp2] (29) -- (v1);
    \end{tikzpicture}
  }
    \caption{Hierarchical structure exposed by the solution to
      the MEO
      problem~\eqref{eq:min-overlap}. Edges are styled according to
      the edge usage probability
      $\eta(e)=\mathbb{P}_\mu(e\in\underline{\gamma})$ when $\mu$ is
      optimal. Solid edges are present with probability $1/5$, dashed
      edges with probability $1/3$ and dotted edges with probability
      $1/2$.}
    \label{fig:deflate}
\end{figure}
The subgraph induced by the least-used edges (those with probability $1/5$) form
a homogeneous core, as guaranteed by
Theorem~\ref{thm:component}.  The process of
deflation consists in shrinking this core to a
single node producing a new graph (Figure~\ref{fig:deflate}(b)) where
the core is replaced by the white node at the center; edges within the
core (which would produce self-loops) are removed; and edges from
outside the core to the core are retained with multiplicity.
Note that every edge in Figure~\ref{fig:deflate}(b) corresponds to a
unique edge in Figure~\ref{fig:deflate}(a).  The power of deflation
lies in the fact that any optimal law $\mu^*$ for Figure~\ref{fig:deflate}(b)
gives each edge the exact same edge usage probability that it had
before the homogeneous core was shrunk (see Section~\ref{ssec:deflation}).  Any graph is guaranteed to have a non-trivial homogeneous core, hence the
deflation process can be iterated, producing, in this case, the graph in
Figure~\ref{fig:deflate}(c), which is itself homogeneous, meaning that there exists a law $\mu$ giving all edges equal usage probability.

In this way, the minimum expected overlap problem  produces a type
of hierarchical deconstruction of a graph, identifying one or more
highly-connected homogeneous cores surrounded by increasingly sparse
peripheral layers.

\subsection{Families of objects and Fulkerson duality}
\label{sec:fulkerson-duality}

The key to our approach to the MEO problem is its connection to the theory of modulus of families of objects \cite{acfpc}.
Note that, using (\ref{eq:edge-usage-prob}), the unique optimal edge usage probability $\eta^*$ can be thought of as the expected indicator function of fair random trees. It turns out that $\eta^*$ can also be described as the extremal density of a modulus problem. Before explaining how this works, we recall the basic dual structure of modulus problems.

Assume $G=(V,E,\si)$ is a weighted graph with edge weights $\si\in \R_{>0}^E$.  By a \emph{family of objects} on $G$, we mean a pair $(\Ga,\cN)$.  In principle, $\Ga$ is simply a countable (possibly infinite) index set.  In practice, $\Ga$ is typically associated with certain ``real'' objects on $G$, such as paths, cuts, spanning trees, etc.  $\cN\in\mathbb{R}^{\Ga\times E}_{\ge 0}$ is called the \emph{usage matrix} for the family: each \emph{object} $\ga\in\Ga$ is given a corresponding {\it usage vector} $\cN(\ga,\cdot)^T\in\R_{\ge 0}^E$, where
\[
\text{$\cN(\ga,e):=$ the usage of $e$ by $\ga$}.
\]
The term \emph{usage} here is flexible; when $\Ga\subset 2^E$, a common choice is simply to use the indicator vector $\ones_{\{e\in\gamma\}}$ as in~\eqref{eq:edge-usage-prob}, but as we shall see, more general concepts of usage arise naturally.  To simplify notation, we typically refer to $\Ga$ alone as the family and think of $\cN$ as a ``universal'' symbol so that the notation $\cN(\ga,e)$ is always used to denote usage.

The $\MEO$ problem (\ref{eq:min-overlap}) can be generalized to any family of objects by computing the {\it weighted overlap} between two objects $\ga$ and $\ga'$ as follows.
\begin{equation}
  \label{eq:min-overlap-gen}
C(\ga,\ga'):=\sum_{e\in E}\si(e)^{-1}\cN(\ga,e)\cN(\ga',e).
\end{equation}

In what follows, we identify each object $\ga\in\Ga$ by its corresponding vector $\cN(\ga,\cdot)^T$ in the orthant $\R_{\ge 0}^E$, which we call $\mathbf{\eta}${\it -space}.
On the other hand, $\mathbf{\rho}${\it -space} is the orthant $\R_{\ge 0}^E$, where we interpret $\rho$ as a {\it cost-function} or {\it density}. Namely, for every $e\in E$,
\[
\rho(e):=\text{ the cost of using edge $e$}.
\]
(Although these two set correspond to exactly the same subset of $\mathbb{R}^E$, it can aid intuition to think of $\eta$ and $\rho$ objects of different type.  The reason they appear to reside in the same set is simply that the dual convex cone to $\mathbb{R}_{\ge 0}^E$ can be identified with $\mathbb{R}_{\ge 0}^E$.)
Given $\rho$, every object $\ga\in\Ga$ acquires a {\it total usage cost}:
\[
\ell_\rho(\ga):=\sum_{e\in E}\cN(\ga,e)\rho(e) = (\cN\rho)(e).
\]
A density $\rho$ is {\it admissible} for a family $\Ga$ if, in informal terms,  ``everyone pays at least a dollar,'' namely, if
\[
\ell_\rho(\ga)\ge 1\qquad\forall\ga\in\Ga.
\]
In terms of the usage matrix, this condition can be written as
\[
\cN\rho\ge\one,
\]
where $\one$ is the vector of all ones in $\R^\Ga$.
Let $\Adm \Ga$ be the set of all admissible densities in $\rho$-space:
\[
\Adm \Ga:=\{\rho\in\R^E_{\ge 0}: \cN\rho\ge\one\}.
\]
Note that $\Adm\Ga$ is closed, convex in $\R_{\ge 0}^E$, and recessive, meaning that adding a non-negative vector $z$ to an admissible density $\rho$ does not change admissibility. In formulas,
\[
\Adm \Ga +\R_{\ge 0}^E=\Adm\Ga.
\]
The {\it Fulkerson dual} family for $\Ga$ is:
\[
\hat{\Ga}:={\rm ext}(\Adm\Ga)\subset \R_{\ge 0}^E,
\]
where ${\rm ext}$ denotes the set of {\it extreme points}.  Since $\hat\Gamma$ is a set of points in $\rho$-space, we can interpret it as a dual family of objects to $\Ga$ which, in turn, has its own dual family.
Fulkerson duality (see \cite{acfpc}) states that:
\begin{equation}\label{eq:dual-dual}
\hat{\hat{\Ga}}\subset\Ga.
\end{equation}

\subsection{Modulus of families of objects}
\label{sec:modulus}
 Fix $1\le p<\infty$, the $p${\it -modulus} of a family of objects $\Ga$ is:
\begin{equation}\label{eq:p-modulus-def}
\Mod_{p,\si}(\Ga):=\inf_{\cN\rho\ge 1}\sum_{e\in E}\si(e)\rho(e)^p.
\end{equation}
We say that $\cE_{p,\si}(\rho):=\sum_{e\in E}\si(e)\rho(e)^p$  is  the {\it energy} of the density $\rho$.
If $p=\infty$, $\Mod_{\infty,\si}(\Ga)$ is defined using $\cE_{\infty,\si}(\rho)=\max_{e\in E}\{\si(e)\rho(e)\}$. In geometric terms, the modulus is related to the weighted $p$-norm distance from the convex set $\Adm\Ga$ to the origin in $\mathbb{R}^E$.  When $1<p<\infty$, strict convexity of the $p$-norm implies that there is a unique optimal density $\rho^*$.
For $p=1,\infty$, an optimal $\rho^*$ exists, but may not be unique.
Moreover, $1$-modulus is a linear program, so at least one optimal $\rho^*$ must occur at an extreme point of $\Adm \Ga$. Namely,
\begin{equation}\label{eq:gen-mincut}
\Mod_{1,\si}(\Ga)=\min_{\hat{\ga}\in\hat{\Ga}}\si^T \hat{\ga}.
\end{equation}
Furthermore, by varying $\si$, every $\hat{\ga}\in\hat{\Ga}$ arises as the unique optimal solution of a particular $\Mod_{1,\si}(\Ga)$ problem.
 Equation (\ref{eq:gen-mincut}) says that $1$-modulus is a generalized {\it min-cut} problem.  (See Example~\ref{ex:paths-cuts}.)

Fulkerson duality for modulus states that (see Theorem 3.7 of \cite{acfpc}): if $1<p<\infty$ and $pq=p+q$, then
\begin{equation}\label{eq:fulkerson-duality}
\Mod_{p,\si}(\Ga)^{1/p}\Mod_{q,\si^{1-q}}(\hat{\Ga})^{1/q}=1.
\end{equation}
Moreover,
\begin{equation}\label{eq:eta-star-rho-star}
\eta^*(e)=\frac{\si(e)\rho^*(e)^{p-1}}{\Mod_{p,\si}(\Ga)}\qquad\forall e\in E.
\end{equation}
A limit can be taken in (\ref{eq:fulkerson-duality}), to obtain:
\[
\Mod_{1,\si}(\Ga)\Mod_{\infty,\si^{-1}}(\hat{\Ga})=1.
\]
In the special case when $p=2$, we get that
\begin{equation}\label{eq:ptwofulkerson}
\Mod_{2,\si}(\Ga)\Mod_{2,\si^{-1}}(\hat{\Ga})=1.
\end{equation}
Moreover, in this case, the relation between the extremal density $\eta^*$ for $\Mod_{2,\si^{-1}}(\hat{\Ga})$ and the extremal density $\rho^*$ for $\Mod_{2,\si}(\Ga)$ simplifies to
\begin{equation}\label{eq:eta-star-rho-star-p2}
\eta^*(e)=\frac{\si(e)}{\Mod_{2,\si}(\Ga)}\rho^*(e)\qquad\forall e\in E,
\end{equation}
which, in the unweighted $p=2$ case, says that $\eta^*$ and $\rho^*$ are parallel.

\begin{example}[Paths and cuts]
\label{ex:paths-cuts}
When $\Ga=\Ga_s(a,b)$ is the family of all simple paths connecting nodes $a$ and $b$, the Fulkerson dual is the family $\hat{\Ga}=\Ga_c(a,b)$ of all minimal $ab$-cuts. So, by (\ref{eq:gen-mincut}), $\Mod_{1,\si}(\Ga)$ is the usual min-cut problem with capacities $\si$. Furthermore, $\Mod_{2,\si}(\Ga)$ is the effective conductance $\cCeff(a,b)$ when considering $G$ as a resistor network with edge resistances $r(e)=\si(e)^{-1}$. Finally, $\Mod_{\infty,\si}(\Ga)$ is the reciprocal of shortest-path distance between $a$ and $b$ with $\sigma$ providing the edge lengths.
\end{example}

\subsection{Spanning Tree Modulus and the MEO problem}\label{sec:spt-meo}
Although the main theorems of this paper can be developed in more generality, we make two simplifying assumptions now that help reduce notational overhead.  First, we restrict ourselves from here on to the case $p=2$.  Second, we consider only unweighted (i.e., $\si\equiv 1$) multigraphs.  Exponents other than $p$ will not be addressed in this paper.  Weighted graphs are addressed briefly in Section~\ref{sec:weighted-graphs}.

Let $G=(V,E)$ be a graph and let $\Ga=\Ga_G$ be the family of all spanning trees of $G$.
In this case, the Fulkerson dual family $\hat{\Ga}$ can be interpreted as the set of (weighted) feasible
  partitions (see \cite{chopra1989}).
\begin{definition}\label{def:feasible-part}
  A \textit{feasible partition} $P$ of a graph $G=(V,E)$ is a
  partition of the vertex set $V$ into two or more subsets,
  $\{V_1, \ldots , V_{k_P}\}$, such that each of the induced subgraphs
  $G(V_i)$ is connected. The corresponding edge set, $E_P$, is defined
  to be the set of edges in $G$ that connect vertices belonging to different $V_i$'s.
\end{definition}
Feasible partitions play an important role in characterizing homogeneous graphs in Section~\ref{sec:homog-and-fp}. The results of~\cite{chopra1989} imply that the Fulkerson dual of $\Ga_G$ is the set of all vectors
  \begin{equation}\label{eq:feasible-partition-usage}
   \frac{1}{k_P-1}\ones_{E_P},
  \end{equation}
with $P$ ranging over all feasible partitions.
Note that, by Fulkerson duality (\ref{eq:dual-dual}), the extreme points of $\Adm(\hat{\Ga})$ are (indicator functions of) spanning trees.  By convexity, any $\mu\in\cP(\Ga)$ gives rise to an admissible density $\eta=\mathcal{N}^T\mu\in\Adm(\hat{\Ga})$.  In particular, the unique optimal density $\eta^*$ for $\Mod_2(\hat{\Ga})$ belongs to the convex hull of $\Ga$.
In other words, there is an optimal probability measure $\mu^*\in\cP(\Ga)$  such that
\[
\eta^*(e)=\sum_{\ga\in\Ga}\mu^*(\ga)\cN(\ga,e)=\left(\cN^T \mu^*\right)(e)=\bE_{\mu^*}\left(\cN(\underline{\ga},e)\right)\qquad\forall e\in E.
\]
When the family of spanning trees is endowed with the standard indicator function usage, the $\cN(\underline{\ga},e)$ are indicator random variables, hence the expected edge usage becomes an edge probability:
\[
\eta(e)=\bE_{\mu}\left(\cN(\underline{\ga},e)\right)=\bP_{\mu}\left(e\in\underline{\ga} \right)\qquad\forall e\in E.
\]
Finally, since $p=2$ and $\si\equiv 1$, the energy in this case is
\begin{align}
\cE_2(\eta) & =\eta^T\eta  = \sum_{e\in E} \bP_{\mu}\left(e\in\underline{\ga} \right)^2 \label{eq:eta-energy}\\
&=\mu^T\cN\cN^T\mu  =\sum_{\ga,\ga'}\sum_{e\in E}\cN(\ga,e)\cN(\ga',e)\mu(\ga)\mu(\ga') \notag\\
& = \sum_{\ga,\ga'}|\ga\cap\ga'|\mu(\ga)\mu(\ga')=\bE_{\mu}\left(|\ga\cap\ga'|\right). \notag\\ \notag
\end{align}
This shows that $\Mod_2(\hat{\Ga})$ is equal to the minimum expected overlap in the MEO problem. Also since $\Mod_2(\hat{\Ga})$ always has  a unique optimal density $\eta^*$, this shows that the optimal edge usage probabilities are unique.  Using Lagrangian duality and the KKT conditions, the theory can be summarized as follows.
\begin{theorem}[\cite{apc:jan2016}]
  \label{thm:rho-eta-mu-kkt}
Let $G=(V,E)$ be graph, let $\Ga=\Ga_G$ be the family of spanning trees of $G$, and let $\hat{\Ga}$ be its Fulkerson dual family. Then $\rho\in \R^E_{\ge 0}$,
  $\eta\in \R^E_{\ge 0}$ and $\mu\in\cP(\Ga)$ are optimal respectively for $\Mod_{2}(\Ga)$, $\Mod_{2}(\hat{\Ga})$, and $\MEO(\Ga)$  if and only if the following conditions are satisfied.
  \begin{gather}
    \rho\in\Adm(\Gamma),\quad \eta = \N^T\mu,
    \label{eq:pdfeas-kkt}\\
    \eta(e) =
    \frac{\rho(e)}{\Mod_{2}(\Gamma)}\quad\forall
    e\in E,
    \label{eq:rho-vs-eta-kkt}\\
    \mu(\gamma)(1-\ell_\rho(\gamma)) = 0\quad\forall\gamma\in\Gamma.
    \label{eq:comp-slack-kkt}
  \end{gather}
In particular,
\begin{equation}\label{eq:meo-mod}
\MEO(\Ga)=\Mod_{2}(\hat{\Ga})=\Mod_{2}(\Ga)^{-1}.
\end{equation}
\end{theorem}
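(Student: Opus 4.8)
The three problems are all finite-dimensional convex programs (recall $\Gamma=\Gamma_G$ is finite): $\Mod_2(\Gamma)=\min\{\rho^T\rho:\rho\ge 0,\ \cN\rho\ge\one\}$ and $\Mod_2(\hat\Gamma)=\min\{\eta^T\eta:\eta\in\Adm(\hat\Gamma)\}$ minimize a strictly convex quadratic over a polyhedron, and $\MEO(\Gamma)$ minimizes the convex quadratic $\mu\mapsto\mu^T\cN\cN^T\mu$ over the simplex $\cP(\Gamma)$. Since the feasible sets are polyhedral, the KKT conditions are necessary and sufficient for optimality of each. The plan is to write the KKT system for $\Mod_2(\Gamma)$, extract from its Lagrange multiplier a normalized pmf $\mu$ and verify that \eqref{eq:pdfeas-kkt}--\eqref{eq:comp-slack-kkt} are precisely this system, and then transfer optimality to $\Mod_2(\hat\Gamma)$ and $\MEO(\Gamma)$ using the $p=2$ Fulkerson duality \eqref{eq:ptwofulkerson}, the relation \eqref{eq:eta-star-rho-star-p2}, and the energy identity \eqref{eq:eta-energy} already established above.

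\emph{Necessity.} Because $\cN\ge 0$, a one-line swap argument shows the minimizer of $\rho^T\rho$ over $\{\cN\rho\ge\one\}$ already has $\rho\ge 0$ (zeroing out a negative coordinate keeps $\cN\rho\ge\one$ and strictly lowers the energy), so the orthant constraint is inactive and may be dropped. Forming $L(\rho,\lambda)=\rho^T\rho-\lambda^T(\cN\rho-\one)$ with $\lambda\in\R^\Gamma_{\ge 0}$, stationarity gives $\rho^*=\tfrac12\cN^T\lambda$ (automatically $\ge 0$) and complementary slackness reads $\lambda_\gamma(\ell_{\rho^*}(\gamma)-1)=0$ for all $\gamma$. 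Since $\rho^*\neq 0$ we have $|\lambda|_1:=\one^T\lambda>0$, so $\mu:=\lambda/|\lambda|_1\in\cP(\Gamma)$ and $\eta:=\cN^T\mu=2\rho^*/|\lambda|_1$. Pairing stationarity with complementary slackness, $2(\rho^*)^T\rho^*=(\rho^*)^T\cN^T\lambda=\sum_\gamma\lambda_\gamma\ell_{\rho^*}(\gamma)=\sum_\gamma\lambda_\gamma=|\lambda|_1$, hence $\Mod_2(\Gamma)=(\rho^*)^T\rho^*=|\lambda|_1/2$ and $\eta=\rho^*/\Mod_2(\Gamma)$, which is \eqref{eq:rho-vs-eta-kkt}; rewriting complementary slackness through $\mu$ gives \eqref{eq:comp-slack-kkt}, and $\rho^*\in\Adm(\Gamma)$ with $\eta=\cN^T\mu$ is \eqref{eq:pdfeas-kkt}.

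\emph{Sufficiency and transfer.} Conversely, if $\rho,\eta,\mu$ satisfy \eqref{eq:pdfeas-kkt}--\eqref{eq:comp-slack-kkt}, then $\lambda:=2\Mod_2(\Gamma)\mu\ge 0$ satisfies $\tfrac12\cN^T\lambda=\Mod_2(\Gamma)\eta=\rho$ by \eqref{eq:rho-vs-eta-kkt} and $\lambda_\gamma(\ell_\rho(\gamma)-1)=0$ by \eqref{eq:comp-slack-kkt}, so KKT sufficiency makes $\rho$ optimal for $\Mod_2(\Gamma)$. Next, since each $\hat\gamma\in\hat\Gamma=\ext(\Adm(\Gamma))$ is by definition admissible for $\Gamma$, every spanning-tree indicator vector is admissible for $\hat\Gamma$, so $\eta=\cN^T\mu$, being a convex combination of such vectors, lies in $\Adm(\hat\Gamma)$. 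Its energy is $\eta^T\eta=\rho^T\rho/\Mod_2(\Gamma)^2=\Mod_2(\Gamma)^{-1}$, which by \eqref{eq:ptwofulkerson} equals $\Mod_2(\hat\Gamma)$; as the $2$-modulus has a unique minimizer, $\eta$ is optimal for $\Mod_2(\hat\Gamma)$. Finally, the identity $\Mod_2(\hat\Gamma)=\MEO(\Gamma)$ is exactly \eqref{eq:eta-energy} together with the fact (noted above) that the optimal density for $\Mod_2(\hat\Gamma)$ lies in the convex hull of $\Gamma$; hence $\bE_\mu|\underline{\gamma}\cap\underline{\gamma}'|=\eta^T\eta=\Mod_2(\hat\Gamma)=\MEO(\Gamma)$, so $\mu$ is $\MEO$-optimal, and chaining the equalities gives \eqref{eq:meo-mod}.

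\emph{Main obstacle.} The delicate part is not any single computation but making the three-way equivalence airtight: one must know that the KKT conditions for $\Mod_2(\Gamma)$ are simultaneously the optimality conditions for $\Mod_2(\hat\Gamma)$ and $\MEO(\Gamma)$, which relies on strong duality for polyhedral convex programs and, crucially, on the uniqueness of the $2$-modulus minimizers $\rho^*$ and $\eta^*$ — this is what forces any optimal $\MEO$ law $\mu^*$ to satisfy $\cN^T\mu^*=\eta^*=\rho^*/\Mod_2(\Gamma)$, even though $\mu^*$ itself need not be unique. The only genuinely technical wrinkle, the orthant constraint $\rho\ge 0$ in $\Mod_2(\Gamma)$, is removed by the swap argument above, after which everything reduces to bookkeeping with Fulkerson duality and the energy formula \eqref{eq:eta-energy}.
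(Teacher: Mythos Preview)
The paper does not prove this theorem; it is quoted from~\cite{apc:jan2016} and simply used. Your KKT-based argument is the standard route and is essentially correct, with one small gap worth flagging.

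Your ``Necessity'' paragraph takes the unique optimal $\rho^*$ for $\Mod_2(\Gamma)$ and \emph{constructs} a particular $\mu$ (the normalized multiplier) and $\eta=\cN^T\mu$ satisfying \eqref{eq:pdfeas-kkt}--\eqref{eq:comp-slack-kkt}. Your ``Sufficiency'' paragraph then shows that any triple satisfying the conditions is optimal. What is still missing is the full ``only if'': given an \emph{arbitrary} optimal $\mu^*$ for $\MEO(\Gamma)$, you must check \eqref{eq:comp-slack-kkt}. You correctly note in the last paragraph that uniqueness of $\eta^*$ forces $\cN^T\mu^*=\eta^*=\rho^*/\Mod_2(\Gamma)$, but you never derive complementary slackness for this $\mu^*$. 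The missing line is
\[
\sum_{\gamma}\mu^*(\gamma)\,\ell_{\rho^*}(\gamma)
=(\rho^*)^T\cN^T\mu^*
=(\rho^*)^T\eta^*
=\frac{(\rho^*)^T\rho^*}{\Mod_2(\Gamma)}=1,
\]
which, together with $\ell_{\rho^*}(\gamma)\ge 1$ and $\sum_\gamma\mu^*(\gamma)=1$, forces $\mu^*(\gamma)>0\Rightarrow\ell_{\rho^*}(\gamma)=1$. With that line added, the argument is complete.
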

Therefore, by  (\ref{eq:ptwofulkerson}) and (\ref{eq:eta-star-rho-star-p2}), the value of the \text{MEO} problem, as well as, the optimal edge usage probabilities $\eta^*$, can be obtained by computing $\Mod_{2}(\Ga_G)$ and the optimal density $\rho^*$ instead.  An efficient algorithm for computing $\rho^*$ is described in Section~\ref{sec:algorithm}, thus leading to a means of solving the MEO problem.
\begin{remark}
To complete the picture in Theorem \ref{thm:rho-eta-mu-kkt}, one could use (\ref{eq:dual-dual}) and introduce a measure $\nu\in\cP(\hat{\Ga})$, so that $\rho^*(e)$ can be interpreted as the expected edge usage of random objects in $\hat{\Ga}$. However, we will not pursue this aspect of the theory here.
\end{remark}

The following corollary can be helpful in proving optimality.
\begin{corollary}\label{cor:weak-duality}
  Let $\Gamma$ be a finite family of objects on $G$ and let $\hat{\Gamma}$ be its
  Fulkerson dual family. Let $\rho\in\Adm(\Gamma)$ and $\eta\in\Adm(\hat{\Gamma})$.
  Then
  \begin{equation*}
    \E_{2}(\rho)\E_{2}(\eta) \ge 1.
  \end{equation*}
  Moreover, this inequality is satisfied as equality if and only if $\rho$
  and $\eta$ are optimal for their corresponding modulus problems.
\end{corollary}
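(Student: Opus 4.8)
The plan is to deduce everything from a single Euclidean pairing bound and then read off the equality case from the already-stated Fulkerson duality for modulus. We work in the running setting ($p=2$, $\si\equiv 1$), so that $\cE_2(\rho)=\sum_{e\in E}\rho(e)^2=\|\rho\|_2^2$ and likewise for $\eta$; write $\langle\cdot,\cdot\rangle$ for the standard inner product on $\R^E$. We may assume $\Adm(\Gamma)\neq\emptyset$ and $\Adm(\hat\Gamma)\neq\emptyset$, since otherwise no admissible $\rho$ or $\eta$ exists and the statement is vacuous; one checks that then $\hat\Gamma$ is a nonempty set of nonzero vectors, so in particular $\cE_2(\eta)>0$, and also $\Mod_2(\Gamma)>0$ (as $0\notin\Adm(\Gamma)$).

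The main step is the pairing bound: $\langle\rho,\eta\rangle\ge 1$ whenever $\rho\in\Adm(\Gamma)$ and $\eta\in\Adm(\hat\Gamma)$. To prove it I would use that $\Adm(\Gamma)=\{\rho\in\R_{\ge0}^E:\N\rho\ge\one\}$ is a polyhedron (finitely many constraints, since $\Gamma$ is finite) contained in the orthant, hence it contains no line and possesses extreme points, and its recession cone is exactly $\R_{\ge0}^E$ (adding any $z\ge0$ preserves $\rho\ge0$ and, since $\N\ge0$, also $\N\rho\ge\one$, while no direction with a negative coordinate can be recessive for a set inside the orthant). By the Minkowski--Weyl decomposition of a pointed polyhedron, every $\rho\in\Adm(\Gamma)$ can be written $\rho=\sum_i\lambda_i\hat\gamma^{(i)}+z$ with $\hat\gamma^{(i)}\in\ext(\Adm(\Gamma))=\hat\Gamma$, $\lambda_i\ge0$, $\sum_i\lambda_i=1$, and $z\ge0$. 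Since $\eta\in\Adm(\hat\Gamma)$ means precisely $\langle\hat\gamma,\eta\rangle\ge 1$ for all $\hat\gamma\in\hat\Gamma$, and since $\eta,z\ge0$, we get $\langle\rho,\eta\rangle=\sum_i\lambda_i\langle\hat\gamma^{(i)},\eta\rangle+\langle z,\eta\rangle\ge\sum_i\lambda_i=1$.

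Given the pairing bound, the inequality follows from Cauchy--Schwarz: $1\le\langle\rho,\eta\rangle\le\|\rho\|_2\|\eta\|_2=\cE_2(\rho)^{1/2}\cE_2(\eta)^{1/2}$, and squaring yields $\cE_2(\rho)\cE_2(\eta)\ge 1$. (If one prefers not to invoke the pairing lemma at all, the same bound is immediate from $\cE_2(\rho)\ge\Mod_2(\Gamma)$ and $\cE_2(\eta)\ge\Mod_2(\hat\Gamma)$ — true simply because $\rho,\eta$ are admissible — together with $\Mod_2(\Gamma)\Mod_2(\hat\Gamma)=1$ from \eqref{eq:ptwofulkerson}; I would probably record both routes.) For the equality case, if $\rho$ and $\eta$ are optimal then $\cE_2(\rho)=\Mod_2(\Gamma)$ and $\cE_2(\eta)=\Mod_2(\hat\Gamma)$, and \eqref{eq:ptwofulkerson} gives product $1$. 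Conversely, if $\cE_2(\rho)\cE_2(\eta)=1$, then from admissibility and \eqref{eq:ptwofulkerson} the chain $1=\cE_2(\rho)\cE_2(\eta)\ge\Mod_2(\Gamma)\,\cE_2(\eta)\ge\Mod_2(\Gamma)\Mod_2(\hat\Gamma)=1$ consists entirely of equalities; cancelling the positive factors $\cE_2(\eta)$ and $\Mod_2(\Gamma)$ forces $\cE_2(\rho)=\Mod_2(\Gamma)$ and $\cE_2(\eta)=\Mod_2(\hat\Gamma)$, i.e.\ $\rho$ and $\eta$ attain their respective moduli and are optimal (and, since $p=2$ gives a unique optimal density, $\rho=\rho^*$ and $\eta=\eta^*$).

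The only place with any genuine content is the pairing bound $\langle\rho,\eta\rangle\ge 1$, which relies on the ``blocking polyhedron'' structure of $\Adm(\Gamma)$ — that it equals the convex hull of its extreme points $\hat\Gamma$ plus $\R_{\ge0}^E$ — the same structural fact that underlies Fulkerson duality \eqref{eq:dual-dual}. After that the argument is Cauchy--Schwarz plus bookkeeping with \eqref{eq:ptwofulkerson}. I would also flag at the outset that finiteness of $\Gamma$ is exactly what makes $\Adm(\Gamma)$ a bona fide polyhedron and $\hat\Gamma$ finite, so that $\Mod_2(\hat\Gamma)$ and \eqref{eq:ptwofulkerson} are legitimately available.
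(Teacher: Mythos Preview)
Your proof is correct. The paper's own argument is exactly the short alternative you mention in parentheses: from admissibility $\cE_2(\rho)\ge\Mod_2(\Gamma)$ and $\cE_2(\eta)\ge\Mod_2(\hat\Gamma)$, and then $\Mod_2(\Gamma)\Mod_2(\hat\Gamma)=1$ from~\eqref{eq:ptwofulkerson}, with the equality case read off from the chain. Your featured route---the pairing bound $\langle\rho,\eta\rangle\ge 1$ via the Minkowski--Weyl decomposition of $\Adm(\Gamma)$, followed by Cauchy--Schwarz---is a genuinely different argument for the inequality. Its virtue is that it derives $\cE_2(\rho)\cE_2(\eta)\ge 1$ directly from the blocking-polyhedron structure, without appealing to the Fulkerson identity~\eqref{eq:ptwofulkerson}; the cost is the extra polyhedral machinery. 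For the equality characterization you revert to the modulus chain anyway, so in the end both proofs rely on~\eqref{eq:ptwofulkerson} at that point. Either route is fine; the paper simply takes the shorter one throughout.
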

\begin{proof}
  The proof follows from the definition of modulus as a minimization
  over the admissible set, which shows that
  \begin{equation*}
    \E_{2}(\rho) \ge \Mod_{2}(\Gamma) =
    \Mod_{2}(\hat{\Gamma})^{-1} \ge \E_{2}(\eta)^{-1},
  \end{equation*}
  with equality holding if and only if $\rho$ and $\eta$ solve
  their corresponding optimization problems.
\end{proof}

\subsection{An algorithm for spanning tree modulus}
\label{sec:algorithm}

As mentioned before, the family $\Ga_G$ of all spanning trees of $G$ typically contains a
very large number of objects. For instance, the complete graph $K_N$ has $N^{N-2}$ spanning trees.  In principle, since every spanning tree of $G$ gives rise to a constraint in the modulus problem, it would be computationally infeasible even to enumerate all constraints, let alone provide them as input to a standard quadratic program solver.  However,  the basic algorithm~\cite{APCDSG} described below allows one to solve the problem to within a given tolerance by iteratively ``growing'' a subfamily $\Ga'\subset \Ga_G$ with approximately the same modulus. In practice, this algorithm typically halts after discovering only a relatively small set of  ``important'' trees.

\begin{algorithm}
  \begin{enumerate}
  \item {\bf Start:} Set $\Ga^\prime=\emptyset$ and $\rho\equiv 0$.
  \item {\bf Repeat:}
    \begin{enumerate}
    \item Find $\ga\in \Ga_G\setminus\Ga^\prime$ such that
      $\ell_\rho(\ga)<1-\etol$.  Stop if none found.
    \item Add $\ga$ to $\Ga^\prime$.
    \item Optimize $\rho$ so that $\cE_p(\rho)=\Mod_p(\Ga^\prime)$.
    \end{enumerate}
  \end{enumerate}
  \caption{Basic $p$-modulus algorithm with tolerance $\etol\ge 0$.}\label{alg:mod}
\end{algorithm}

Algorithm \ref{alg:mod} is suggested by the $\Ga$-monotonicity of $p$-modulus, i.e., the fact that $\Ga_1\subset\Ga_2$ implies $\Mod_p(\Ga_1) \le\Mod_p(\Ga_2)$.  This is an example of an exterior point method; the initial guess for $\rho$ is infeasible and each step attempts to move $\rho$ closer to feasibility while guaranteeing that it is optimal for a subproblem.

The initial conditions and step (c) guarantee that, at the beginning of each pass through the loop, $\rho$ is optimal for the modulus of the subfamily $\Ga'\subset\Ga_G$.  Step (a) ensures that the algorithm terminates only when $\rho$ is approximately feasible (and approximately optimal) for $\Mod_p(\Ga_G)$.  If $\etol=0$, the algorithm will terminate the first time $\Mod_p(\Ga')=\Mod_p(\Ga_G)$, since in this case all active constraints will have been discovered.  In this case, step (c) ensures that $\rho$ is optimal for $\Mod_p(\Ga_G)$.  For $\etol>0$, it is possible to control the relative errors in the modulus value as well as in the optimal $\rho$; see \cite[Theorem~9.1]{APCDSG} for a precise statement.

Note that the algorithm described does not specify how $\ga$ should be selected in part (a).  It is known~\cite[Theorem~3.5(iv)]{apc:jan2016} that there exists a subfamily $\Ga'\subset\Ga_G$ of size $|\Ga'|\le|E|$ such that $\Mod_p(\Ga')=\Mod_p(\Ga_G)$.  Ideally, one would like to add one of the trees in this family on each step.  However, these constraints are not known a priori.  One approach that works well experimentally, is to choose the most violated (i.e., the $\gamma$ with smallest $\ell_{\rho}(\gamma)$) on each step.  This can be done efficiently for spanning tree modulus, e.g., by Kruskal's algorithm.  In practice, choosing the lightest tree on each step tends to lead to the algorithm ending with a relatively small $\Gamma'$.

Algorithm~\ref{alg:mod} can be used to solve the MEO problem as follows.  By applying standard techniques from convex optimization, one can arrange that part (c) yield, not only the density $\rho$ that is optimal for $\Mod_2(\Ga')$, but also the corresponding Lagrange dual variables $\lambda\ge 0$ enforcing the $\cN\rho\ge\one$ constraints.  As shown in~\cite[Theorem~5.1]{apc:jan2016}, when $\lambda$ is normalized to a pmf $\mu$, the resulting $\mu$ is optimal for the MEO problem.

Therefore, the algorithm not only gives an approximation for modulus, but it also provides a subfamily $\Ga'$ as well as a pmf $\mu\in\cP(\Ga')$ such that
\begin{equation*}
  \bE_{\mu}|\underline{\gamma}\cap\underline{\gamma}'| = \MEO(\Ga')\approx
  \MEO(\Ga_G).
\end{equation*}
To our knowledge, there is no other method for effectively computing the solution to the MEO problem.  The basic algorithm for computing modulus has played a pivotal role in our investigations. For instance, much of the impetus to study homogeneous cores began when trying to explain Figure~\ref{fig:spt}.

\begin{figure}
  \centering
  \includegraphics[trim=1.5cm 1.5cm 1.1cm 1.1cm,clip,width=0.6\textwidth]{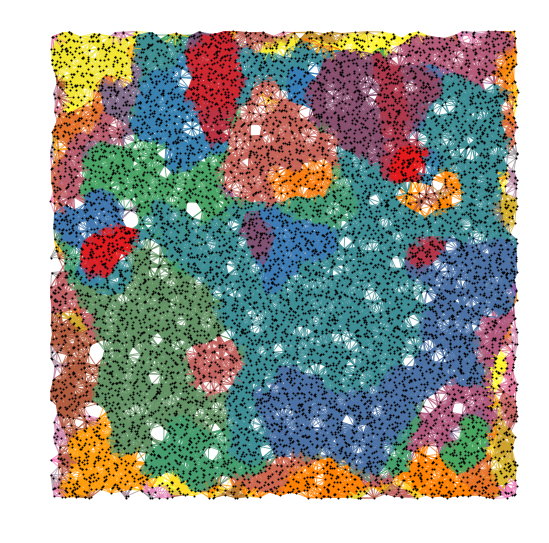}
\caption{Spanning tree modulus on a random geometric graph.  Edge colors indicate expected edge usage.  The ``islands'' of constant color are related to the deflation process described in Section~\ref{sec:deflation}.}\label{fig:spt}
\end{figure}

In the case of spanning trees,  there are ways of significantly improving the performance of Algorithm \ref{alg:mod}.  The two time-critical parts of the algorithm are the \emph{constraint selection} (Step 2(a)) and the \emph{subproblem optimization} (Step 2(c)).  For the former, we use an efficient ``smallest weighted object'' method, such as Kruskal's minimum spanning tree algorithm, while, for the latter, we implemented   a dual method based on the algorithm of Goldfarb and Idnani~\cite{goldfarb1983} that can move efficiently from the solution of one subproblem to the next. As a result, we were able to compute the spanning tree modulus of the random geometric graph shown in Figure~\ref{fig:spt} (containing 10,000 nodes and 151,280 links) with a tolerance $\etol=10^{-4}$ on a desktop CPU in under 10 minutes.

\subsection{Weighted graphs}
\label{sec:weighted-graphs}

As mentioned previously, this paper is primarily focused on unweighted multigraphs.  It turns out that this is not as restrictive as it might at first seem.  Suppose we have a weighted multigraph $G=(V,E,\si)$ with weights $\si\in\mathbb{R}^E_{>0}$.  Since modulus is continuous in $\si$~\cite[Theorem 7.2]{acfpc}, we may as well assume $\si\in\mathbb{Q}^E_{>0}$; the optimal $\rho$, $\mu$ and $\eta$ are all continuous in $\si$.  Moreover, it is straightforward to show that, for any $s>0$, modulus satisfies
\begin{equation*}
\Mod_{p,s\si}(\Ga) = s\Mod_{p,\si}(\Ga),
\end{equation*}
with exactly the same optimal density $\rho$.  Thus, by approximation, we may replace any weighted multigraph with a multigraph with positive integer weights.

Now, given a weighted multigraph $G=(V,E,\si)$ with $\si\in\mathbb{Z}^E_{>0}$, we can transform $G$ into an unweighted multigraph $G'=(V,E')$ by treating the weights $\si(e)$ as edge multiplicities.  In other words, each $e\in E$ gives rise to $\si(e)$ parallel edges in $E'$.  Keeping in mind the $\si$ weights as multiplicities, it is then straightforward to establish an equivalence between the MEO, FEU and spanning tree modulus problems on $G$ and $G'$.

\subsection{Summary of results}

The key contributions of this paper can be summarized briefly as follows.
\begin{itemize}
\item The serial rule developed in Theorem~\ref{thm:serial} provides conditions under which the MEO problem splits into two simpler problems.  This is a generalization of the well-known serial rule in resistor networks which applies when the family of paths between terminals splits into paths from the first terminal to an articulation point and paths from the articulation point to the second terminal.
\item Section~\ref{sec:homog-unif} introduces the concepts of homogeneous graphs (with respect to spanning tree modulus).  These graphs play the role of ``atoms'' in the deflation process and we provide several useful characterizations of them.
\item Section~\ref{sec:homog-and-fp} establishes the connection between homogeneous graphs and feasible partitions.  In particular, Theorem~\ref{thm:eta-max-feasible-partition} shows how the solution of the MEO problem leads directly to the solution to the \emph{minimum feasible partition} problem:
\begin{equation*}
\begin{split}
\text{minimize}\quad&\frac{|E_P|}{k_P-1}\\
\text{subject to}\quad&P\text{ is a feasible partition.}
\end{split}
\end{equation*}
This is a generalization of the minimum (global) cut problem.
\item Theorem~\ref{thm:component} shows that every graph has a homogeneous core.  The proof is constructive, using the solution of the MEO problem to produce this core.  Section~\ref{sec:deflation} uses certain fundamental properties of cores and their complements to describe a deflation process that decomposes the MEO problem recursively into subproblems on homogeneous graphs.
\item Section~\ref{sec:denseness} establishes a connection between deflation and denseness.  In particular, Theorem~\ref{thm:densest-subgraph-is-core} shows that the homogeneous core produced in the deflation process solves the following \emph{densest subgraph} problem:
\begin{equation*}
\begin{split}
\text{minimize}\quad&\frac{|E_H|}{|V_H|-1}\\
\text{subject to}\quad&H\text{ is a nontrivial, connected,} \\
&\text{vertex-induced subgraph of }G.
\end{split}
\end{equation*}
\end{itemize}

\subsection{A road map}

The remainder of this paper is organized as follows.
We begin, in Section \ref{sec:first-examples}, by describing some simple examples.
In Section~\ref{sec:serial-rule}, we develop a useful serial rule, thus named by analogy to the case of effective resistance, i.e., modulus of connecting paths.  In Section~\ref{sec:homog-unif}, we introduce the notion of homogeneous and uniform graphs for spanning tree modulus.
Then, in Section \ref{sec:deflation}, we use the serial rule to establish the main theorems of this paper, i.e., describe the process of deflation. Finally, in Section \ref{sec:applications}, we give a few theoretical applications of the deflation process.

\section{Some motivating examples}
\label{sec:first-examples}

Consider the three
graphs shown in Figure~\ref{fig:first-examples}.  Each graph
$G=(V,E)$ is defined, as usual, by its vertex set $V$ and edge set
$E$.  For each graph $G$, let $\Gamma=\Gamma_G$ be the set of spanning
trees of $G$.  Each spanning tree $\gamma\in\Gamma$ has a natural
representation in $\{0,1\}^E$ through its indicator function $\ones_{\{e\in\ga\}}$.

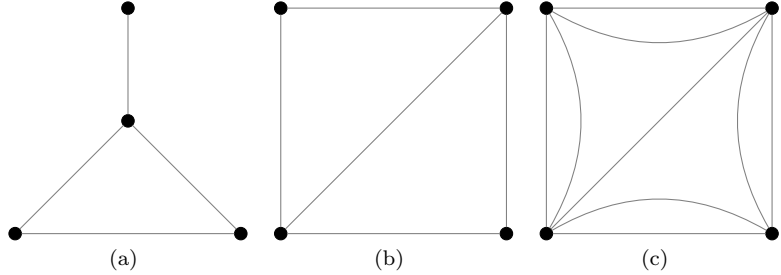
\begin{figure}
  \centering
  \subfigure[]{
    \begin{tikzpicture}[baseline,scale=1.5]
      \tikzstyle{node} = [draw,shape=circle,fill=black,scale=0.5];
      \tikzstyle{default} = [draw=black!50];
      \node[node] (a) at (0,0) {};
      \node[node] (b) at (2,0) {};
      \node[node] (c) at (1,1) {};
      \node[node] (d) at (1,2) {};
      \path[default] (a) -- (b);
      \path[default] (b) -- (c);
      \path[default] (c) -- (a);
      \path[default] (d) -- (c);
    \end{tikzpicture}
  }
  \subfigure[]{
    \begin{tikzpicture}[baseline,scale=1.5]
      \tikzstyle{node} = [draw,shape=circle,fill=black,scale=0.5];
      \tikzstyle{default} = [draw=black!50];
      \node[node] (a) at (0,0) {};
      \node[node] (b) at (2,0) {};
      \node[node] (c) at (2,2) {};
      \node[node] (d) at (0,2) {};
      \path[default] (a) -- (b);
      \path[default] (b) -- (c);
      \path[default] (c) -- (d);
      \path[default] (d) -- (a);
      \path[default] (a) -- (c);
    \end{tikzpicture}
  }
  \subfigure[]{
    \begin{tikzpicture}[baseline,scale=1.5]
      \tikzstyle{node} = [draw,shape=circle,fill=black,scale=0.5];
      \tikzstyle{default} = [draw=black!50];
      \node[node] (a) at (0,0) {};
      \node[node] (b) at (2,0) {};
      \node[node] (c) at (2,2) {};
      \node[node] (d) at (0,2) {};
      \path[default] (a) edge [bend left] (b);
      \path[default] (a) edge (b);
      \path[default] (b) edge [bend left] (c);
      \path[default] (b) edge (c);
      \path[default] (c) edge [bend left] (d);
      \path[default] (c) edge (d);
      \path[default] (d) edge [bend left] (a);
      \path[default] (d) edge (a);
      \path[default] (a) -- (c);
    \end{tikzpicture}
  }
  \caption{Graphs for the examples in
    Section~\ref{sec:first-examples}.}
  \label{fig:first-examples}
\end{figure}

\begin{example}
  Consider the graph in Figure~\ref{fig:first-examples}(a).  If the 3
  spanning trees are enumerated as
  $\Gamma=\{\gamma_1,\gamma_2,\gamma_3\}$ (the actual enumeration
  doesn't matter), then it can be seen that
  \begin{equation*}
    |\gamma_i\cap\gamma_j| =
    \begin{cases}
      3 & \text{if }i = j,\\
      2 & \text{otherwise}.
    \end{cases}
  \end{equation*}
  The symmetries in the problem allow for a straightforward
  verification that the expected overlap is minimized by the uniform
  pmf $\mu\equiv\frac{1}{3}$, yielding an expected overlap of
  \begin{equation*}
    \mathbb{E}_{\mu}|\underline{\gamma}\cap\underline{\gamma}'|
    = 3\mathbb{P}_\mu(\underline{\gamma}=\underline{\gamma}') +
    2\mathbb{P}_\mu(\underline{\gamma}\ne\underline{\gamma}')
    = 3\cdot\frac{1}{3} + 2\cdot\frac{2}{3} = \frac{7}{3}.
  \end{equation*}
\end{example}
Introducing the
indicator random variables
\begin{equation*}
  I_e :=
  \begin{cases}
    1 & \text{if }e\in\underline{\gamma}\cap\underline{\gamma}'\\
    0 &\text{otherwise}
  \end{cases}
\end{equation*}
for each $e\in E$, and using linearity of expectation along with independence of $\underline{\gamma}$ and $\underline{\gamma}'$ shows that
\begin{equation}
  \label{eq:overlap-vs-usage}
  \mathbb{E}_{\mu}|\underline{\gamma}\cap\underline{\gamma}'|
  = \sum_{e\in E}\mathbb{E}_\mu(I_e)
  = \sum_{e\in E}\mathbb{P}_\mu(e\in\underline{\gamma}\;\text{and}\;
  e\in\underline{\gamma}')
  = \sum_{e\in E}\mathbb{P}_\mu(e\in\underline{\gamma})^2.
\end{equation}
Since every spanning tree $\gamma\in\Gamma$ contains exactly $|V|-1$
edges, it follows that the average edge usage probability is
\begin{equation*}
  \frac{1}{|E|}\sum_{e\in E}\mathbb{P}_\mu(e\in\underline{\gamma})
  = \frac{1}{|E|}\sum_{e\in E}\sum_{\gamma\in\Gamma}\mathbbm{1}_\gamma(e)\mu(\gamma)
  = \frac{1}{|E|}\sum_{\gamma\in\Gamma}\mu(\gamma)\sum_{e\in E}\mathbbm{1}_\gamma(e)
  = \frac{|V|-1}{|E|},
\end{equation*}
which is a constant independent of $\mu$.  This fact together with~\eqref{eq:overlap-vs-usage} shows that, in this case, the MEO problem~\eqref{eq:min-overlap} is equivalent to minimizing the variance of the edge usage probabilities.  This fact is proved for general graphs in Theorem~\ref{thm:variance}.

\begin{example}
  Consider the graph in Figure~\ref{fig:first-examples}(b).  There are
  8 spanning trees of this graph, 4 which use the diagonal and 4 which
  do not.  If the uniform pmf $\mu\equiv\frac{1}{8}$ is chosen, then
  the diagonal will appear in half of the spanning trees while all
  other edges will appear with probability $\frac{5}{8}$.  Thus, the
  variance in edge usage probabilities is
  \begin{equation*}
    \frac{1}{5}\left[4\left(\frac{5}{8}\right)^2 +
      \left(\frac{1}{2}\right)^2\right] -
      \left\{\frac{1}{5}\left[4\left(\frac{5}{8}\right) +
        \left(\frac{1}{2}\right)\right]\right\}^2
     = \frac{1}{400},
  \end{equation*}
  and, from~\eqref{eq:overlap-vs-usage}, the expected overlap is
  \begin{equation*}
    \mathbb{E}_{\mu}|\underline{\gamma}\cap\underline{\gamma}'|
    = 4\left(\frac{5}{8}\right)^2 +
    \left(\frac{1}{2}\right)^2 = \frac{29}{16} = 1.8125.
  \end{equation*}

  On the other hand, consider the pmf
  \begin{equation}
    \label{eq:opt-pmf-slashed-square}
    \mu(\gamma) =
    \begin{cases}
      \frac{3}{20} & \text{if $\gamma$ contains the diagonal},\\
      \frac{2}{20} & \text{otherwise}.
    \end{cases}
  \end{equation}
  With this pmf, all edges are equally likely to occur in
  $\underline{\gamma}$, so this $\mu$ minimizes the variance of the
  edge usage probabilities and, hence, also the expected overlap,
  which can be computed as
  \begin{equation*}
    \mathbb{E}_{\mu}|\underline{\gamma}\cap\underline{\gamma}'|
    = 5\left(\frac{3}{5}\right)^2 = 9/5 = 1.8.
  \end{equation*}
\end{example}

\begin{example}
  Finally, consider the graph in Figure~\ref{fig:first-examples}(c).
  By identifying parallel edges, each spanning tree of this graph
  projects to a spanning tree of the graph in the previous example.
  Since each spanning tree of~\ref{fig:first-examples}(b) that
  contains the diagonal has 4 pre-images under this projection while
  all other spanning trees have 8 pre-images, the graph in
  Figure~\ref{fig:first-examples}(c) has 48 spanning trees.

  In this case, if $\mu\equiv\frac{1}{48}$ is the uniform pmf, then a
  straightforward computation shows that all edges have a
  $\frac{1}{3}$ probability of occurring in a random spanning tree
  $\underline{\gamma}\sim\mu$.  This gives the minimum expected overlap
  \begin{equation*}
    \mathbb{E}_{\mu}|\underline{\gamma}\cap\underline{\gamma}'|
    = 9\left(\frac{1}{3}\right)^2 = 1.
  \end{equation*}
\end{example}

\section{The serial rule}\label{sec:serial-rule}

In this section, we develop a generalization in the context of spanning tree modulus of the ``serial rule''
for resistor networks.  To start, let $G=(V,E)$ be a graph and
let $\Gamma\subset 2^E$ be a family of objects on $G$.
Let $E=E_1\cup E_2$ be a
partition of the edge set into two non-empty and non-overlapping subsets.  Such a
partition produces two restriction operators, for $i=1,2$,
\begin{align}\label{eq:restriction-operator}
\psi_i:  2^E &\rightarrow 2^{E_i} \\
           S &\mapsto S\cap E_i  \notag
\end{align}
and hence two induced families of objects, for $i=1,2$,
\begin{equation}
  \label{eq:Gamma-i}
  \Gamma_i := \left\{\gamma\ones_{E_i} : \gamma\in\Gamma\right\}.
\end{equation}
\begin{definition}
Given a partition $\{E_1,E_2\}$ of $E$, a pmf $\mu\in\P(\Gamma)$ naturally restricts to two new pmfs
$\mu_i\in\P(\Gamma_i)$, for $i=1,2$, called the {\it marginals} (or {\it push-forwards}),  defined as
\begin{equation}\label{eq:restrict-mu}
\mu_i(\gamma):=((\psi_i)_*\mu)(\gamma)=\sum_{\substack{T \in 2^E\\\psi_i(T)=\gamma}}\mu(T),\qquad\forall \ga\in 2^{E_i}.
\end{equation}
On the other hand, given two measures  $\nu_i: 2^{E_i}\rightarrow\R$, $i=1,2$, define the \emph{trivial coupling} (or {\it product measure}) $\nu_1\oplus \nu_2$ as
\begin{equation}\label{eq:product-measure}
 (\nu_1 \oplus \nu_2)(S):=\nu_1(\psi_1(S)) \nu_2(\psi_2(S))\qquad
 \forall S\in 2^E.
\end{equation}
\end{definition}

\begin{definition}
  We say that a partition $\{E_1, E_2\}$ for $E$  \emph{divides} $\Gamma$ if
\begin{equation}\label{eq:part-divides}
  \Gamma = \Gamma_1 \oplus \Gamma_2 :=
  \left\{
    \gamma_1 \cup \gamma_2 : \gamma_i\in\Gamma_i,\;i=1,2
  \right\}.
\end{equation}
\end{definition}
In other words, when a partition divides $\Ga$, an object is in $\Gamma$ if and only if it is the
union of an arbitrary object in $\Gamma_1$ and another in $\Gamma_2$.
In that case, the modulus/MEO problems split into
two smaller subproblems.
\begin{theorem}\label{thm:serial}
  Let $\Gamma\subset 2^E$ be a family of objects on the graph $G=(V,E)$,
  and let $E=E_1\cup E_2$ be a partition that divides $\Gamma$. Let $\Ga_i$, $i=1,2$, be the induced families as in (\ref{eq:Gamma-i}).  Then:
\begin{enumerate}
\item the following serial rule holds
\begin{equation}\label{eq:meo-serial}
\MEO(\Ga)=\MEO(\Ga_1)+\MEO(\Ga_2);
\end{equation}
 \item  a pmf $\mu\in\P(\Gamma)$ is optimal for $\MEO(\Gamma)$
  if and only if its marginal pmfs $\mu_i\in\cP(\Ga_i)$, $i=1,2$, defined in (\ref{eq:restrict-mu}),  are optimal for $\MEO(\Ga_i)$, respectively;
\item conversely, given two pmfs $\nu_i\in\cP(\Ga_i)$ that are optimal for $\MEO(\Ga_i)$, for $i=1,2$, then $\nu_1\oplus\nu_2$ is an optimal pmf in $\cP(\Ga)$ for $\MEO(\Ga)$;
\item finally, for any pmf $\mu$ with marginals $\mu_i$, if $e\in E_i$, $i=1,2$, then
\[
\bP_\mu(e\in\underline{\ga})=\bP_{\mu_i}(e\in \underline{\ga_i}).
\]
\end{enumerate}
\end{theorem}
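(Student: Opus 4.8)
The plan is to prove the four statements roughly in the logical order (4) $\to$ (1)--(3), since the edge-usage identity in (4) is the computational engine behind the energy splitting, which in turn drives the serial rule. First I would establish (4): given $\mu\in\cP(\Gamma)$ with marginals $\mu_i$, the key observation is that for $e\in E_i$ the event $\{e\in\underline{\ga}\}$ depends only on $\psi_i(\underline{\ga})$, so
\begin{equation*}
\bP_\mu(e\in\underline{\ga}) = \sum_{\ga\in\Gamma}\ones_{\{e\in\ga\}}\mu(\ga)
= \sum_{\tau\in\Gamma_i}\ones_{\{e\in\tau\}}\sum_{\substack{\ga\in\Gamma\\\psi_i(\ga)=\tau}}\mu(\ga)
= \sum_{\tau\in\Gamma_i}\ones_{\{e\in\tau\}}\mu_i(\tau) = \bP_{\mu_i}(e\in\underline{\ga_i}),
\end{equation*}
which is just the pushforward/marginalization identity~\eqref{eq:restrict-mu}. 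This step requires no divisibility hypothesis and is essentially bookkeeping.

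Next I would use~\eqref{eq:overlap-vs-usage} (which holds for any family and any pmf, being just linearity of expectation and independence) to write the expected overlap as a sum over edges of squared usage probabilities, and split that sum over $E = E_1\sqcup E_2$. Combining with (4) gives, for \emph{any} $\mu\in\cP(\Gamma)$ with marginals $\mu_i$,
\begin{equation}\label{eq:overlap-split-plan}
\bE_\mu|\underline{\ga}\cap\underline{\ga}'| = \sum_{e\in E_1}\bP_{\mu_1}(e\in\underline{\ga_1})^2 + \sum_{e\in E_2}\bP_{\mu_2}(e\in\underline{\ga_2})^2 = \bE_{\mu_1}|\underline{\ga_1}\cap\underline{\ga_1}'| + \bE_{\mu_2}|\underline{\ga_2}\cap\underline{\ga_2}'|.
\end{equation}
Note this decomposition holds even without the divisibility assumption; divisibility is what makes the two subproblems \emph{independently} solvable. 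For the converse direction needed in (1) and (3), I would check that if the partition divides $\Gamma$ then for \emph{any} pair $\nu_i\in\cP(\Gamma_i)$ the trivial coupling $\nu_1\oplus\nu_2$, as defined in~\eqref{eq:product-measure}, is a genuine pmf on $\Gamma$ (here divisibility~\eqref{eq:part-divides} is exactly what guarantees its support lands in $\Gamma$) and that its marginals are $\nu_1$ and $\nu_2$; the latter is a short computation using that $\psi_i$ restricted to the product structure is a coordinate projection. Applying~\eqref{eq:overlap-split-plan} to $\mu=\nu_1\oplus\nu_2$ shows $\bE_{\nu_1\oplus\nu_2}|\underline{\ga}\cap\underline{\ga}'|$ equals the sum of the two sub-overlaps.

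Now (1) follows: on one hand, for any $\mu\in\cP(\Gamma)$, \eqref{eq:overlap-split-plan} plus the definition of MEO gives $\bE_\mu|\underline{\ga}\cap\underline{\ga}'|\ge \MEO(\Gamma_1)+\MEO(\Gamma_2)$, so $\MEO(\Gamma)\ge \MEO(\Gamma_1)+\MEO(\Gamma_2)$; on the other hand, taking $\nu_i$ optimal for $\MEO(\Gamma_i)$ and forming $\nu_1\oplus\nu_2$ achieves this lower bound, giving the reverse inequality and establishing~\eqref{eq:meo-serial}. Statement (3) is then immediate, since $\nu_1\oplus\nu_2$ attains the value $\MEO(\Gamma)$. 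For (2): if $\mu$ is optimal for $\MEO(\Gamma)$ then by~\eqref{eq:overlap-split-plan} $\bE_{\mu_1}|\cdot| + \bE_{\mu_2}|\cdot| = \MEO(\Gamma_1)+\MEO(\Gamma_2)$; since each term is individually bounded below by the corresponding MEO, both must be equalities, so each $\mu_i$ is optimal. Conversely, if both marginals are optimal, \eqref{eq:overlap-split-plan} gives that $\mu$ attains $\MEO(\Gamma_1)+\MEO(\Gamma_2)=\MEO(\Gamma)$, hence $\mu$ is optimal.

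The main obstacle I anticipate is purely notational rather than conceptual: carefully handling the identification of $\Gamma$ with subsets of $\{0,1\}^E$ versus the ``$\gamma\ones_{E_i}$'' notation in~\eqref{eq:Gamma-i}, and verifying that $\psi_i$ interacts correctly with the product $\Gamma_1\oplus\Gamma_2$ — in particular that $\psi_i(\gamma_1\cup\gamma_2)=\gamma_i$, which needs $E_1\cap E_2=\emptyset$ and $\gamma_i\subseteq E_i$. Once that bookkeeping is pinned down, every step reduces to linearity of expectation and the elementary identity~\eqref{eq:overlap-vs-usage}; no convexity or duality machinery is needed for this particular theorem (in contrast to the modulus reformulation elsewhere in the paper).
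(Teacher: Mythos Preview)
Your proposal is correct and follows essentially the same strategy as the paper's proof: establish the additive splitting of expected overlap into the two marginal expected overlaps, verify that the trivial coupling $\nu_1\oplus\nu_2$ lands in $\cP(\Gamma)$ with the right marginals (this is where divisibility enters), and then read off (1)--(3) from the two inequalities. The only difference is ordering: you prove (4) first and derive the overlap splitting via the edge-probability identity $\sum_e\eta(e)^2$, whereas the paper splits $C(\gamma,\gamma')$ directly at the object level and proves (4) last; these are equivalent routes through the same computation.
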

\begin{remark}
Note that a pmf $\mu\in\mathcal{P}(\Ga)$ is not necessarily the trivial coupling of its marginals.  In particular, Theorem~\ref{thm:serial}(2) does not imply that  $\mu=\mu_1\oplus\mu_2$; rather, it shows that any coupling of $\mu_1$ and $\mu_2$ is optimal for $\MEO(\Gamma)$, including $\mu_1\oplus\mu_2$.
\end{remark}
\begin{proof}[Proof of Theorem \ref{thm:serial}]
Using  the fact that the partition divides $\Ga$, we see that for any $\ga,\ga'\in\Ga$, there are $\ga_i,\ga'_i\in\Ga_i$, $i=1,2$, so that $\ga=\ga_1\cup\ga_2$ and $\ga'=\ga'_1\cup\ga'_2$. Therefore, by the overlap formula (\ref{eq:min-overlap-gen}),
\begin{equation}\label{eq:split-overlap}
  \begin{split}
C(\ga,\ga') & =\sum_{e\in E}\cN(\ga,e)\cN(\ga',e)\\
& = \sum_{e\in E_1}\cN(\ga_1,e)\cN(\ga'_1,e) + \sum_{e\in E_2}\cN(\ga_2,e)\cN(\ga'_2,e)\\
& = \sum_{e\in E}\cN(\ga_1,e)\cN(\ga'_1,e) + \sum_{e\in E}\cN(\ga_2,e)\cN(\ga'_2,e)\\
& = C(\ga_1,\ga'_1)+C(\ga_2,\ga'_2).
\end{split}
\end{equation}

Using the fact that the partition divides $\Ga$ together with~\eqref{eq:split-overlap} yields
\begin{align*}
\bE_{\mu}[C(\underline{\ga},\underline{\ga'})] & =\sum_{\ga,\ga'\in\Ga}\mu(\ga)\mu(\ga')C(\ga,\ga')\\
& = \sum_{\ga_1,\ga'_1,\ga_2,\ga'_2}\mu(\ga_1\cup\ga_2)\mu(\ga'_1\cup\ga'_2)(C(\ga_1,\ga'_1)+C(\ga_2,\ga'_2)).
\end{align*}
The summand splits into two terms.  For the first term, observe that
\begin{equation*}
  \sum_{\ga_1,\ga'_1,\ga_2,\ga'_2}\mu(\ga_1\cup\ga_2)\mu(\ga'_1\cup\ga'_2)C(\ga_1,\ga'_1)
  = \sum_{\ga_1,\ga'_1}\mu_1(\ga_1)\mu_1(\ga'_1)C(\ga_1,\ga'_1),
\end{equation*}
where $\mu_1$ is the first marginal of $\mu$. The second term yields a similar expression with $\mu_1$ replaced by the second marginal $\mu_2$. This implies that, if $\mu\in\cP(\Ga)$, and $\mu_i$, $i=1,2$, are its marginals, then
\begin{equation}\label{eq:eo-mu-marginals}
\bE_{\mu}[C(\underline{\ga},\underline{\ga'})] = \bE_{\mu_1}[C(\underline{\ga_1},\underline{\ga'_1})] + \bE_{\mu_2}[C(\underline{\ga_2},\underline{\ga'_2})].
\end{equation}
Moreover,
\begin{equation}\label{eq:first-meo-upperbd}
  \MEO(\Ga_1)+\MEO(\Ga_2)  \le \bE_{\mu_1}[C(\underline{\ga_1},\underline{\ga'_1})] + \bE_{\mu_2}[C(\underline{\ga_2},\underline{\ga'_2})].
\end{equation}
Applying~(\ref{eq:eo-mu-marginals}) and~\eqref{eq:first-meo-upperbd} to an optimal pmf  $\mu$, we thus get that
\begin{equation}\label{eq:second-meo-upperbd}
\MEO(\Ga_1)+\MEO(\Ga_2)  \le \bE_{\mu}[C(\underline{\ga},\underline{\ga'})]=\MEO(\Ga).
\end{equation}

Conversely, suppose $\nu_i\in\P(\Gamma_i)$, $i=1,2$, are optimal for their respective $\MEO$
  problems. Let $\nu:=\nu_1\oplus\nu_2$ be defined as in (\ref{eq:product-measure}).
Then $\nu$ is supported on $\gamma$ of the form $\ga_1\cup \ga_2$, where $\ga_i$ is in the support of $\nu_i$, and hence in $\Ga_i$, for $i=1,2$. Since the partition $\{E_1,E_2\}$ divides $\Ga$, this means that the support of $\nu$ is in $\Ga$. Moreover,
\begin{align*}
\sum_{\substack{\ga_1\in \Ga_1 \\ \ga_2\in\Ga_2}}\nu(\ga_1\cup\ga_2)  =
\sum_{\substack{\ga_1\in \Ga_1 \\ \ga_2\in\Ga_2}}\nu_1(\ga_1)\nu_2(\ga_2) =\left(\sum_{\ga_1\in\Ga_1}\nu_1(\ga_1)\right)
\left(\sum_{\ga_2\in\Ga_2}\nu_2(\ga_2)\right)=1.
\end{align*}
So $\nu$ is a pmf in $\cP(\Ga)$. Also,
\[
\sum_{\ga_2\in\Ga_2}\nu(\ga_1\cup\ga_2)=\nu_1(\ga_1)\sum_{\ga_2\in\Ga_2}\nu_2(\ga_2)=\nu_1(\ga_1),
\]
so $\nu_1$ is a marginal of $\nu$, and the same can be said of $\nu_2$. In particular,
(\ref{eq:eo-mu-marginals}) and (\ref{eq:first-meo-upperbd}) hold with $\nu$ and $\nu_i$'s instead of $\mu$ and $\mu_i$'s. Finally,
\begin{align}
\MEO(\Ga) & \le \bE_{\nu}[C(\underline{\ga},\underline{\ga'})]= \bE_{\nu_1}[C(\underline{\ga_1},\underline{\ga'_1})] + \bE_{\nu_2}[C(\underline{\ga_2},\underline{\ga'_2})] \label{eq:third-meo-upperbd}\\
& = \MEO(\Ga_1)+\MEO(\Ga_2). \notag
\end{align}
Therefore, the inequalities in (\ref{eq:second-meo-upperbd}), with $\nu$'s instead of $\mu$'s, and in (\ref{eq:third-meo-upperbd}) are both equalities.
This shows that parts~1 and~3 hold.

For part 2, assume that $\mu\in\cP(\Ga)$ is optimal. Then by part 1, the inequality in (\ref{eq:second-meo-upperbd}) is an equality and, by~\eqref{eq:eo-mu-marginals},
\[
0=\left( \bE_{\mu_1}[C(\underline{\ga_1},\underline{\ga'_1})]-\MEO(\Ga_1)\right) + \left(\bE_{\mu_2}[C(\underline{\ga_2},\underline{\ga'_2})] -\MEO(\Ga_2)\right).
\]
Since each quantity in parenthesis is non-negative, they both have to be zero.
Namely, the marginals of $\mu$ are optimal as well for their respective $\MEO$ problems.

Conversely, suppose that $\mu$ is a pmf on $\Ga$ such that both of its marginals $\mu_i$ are optimal for their respective $\MEO$ problems.  Then~(\ref{eq:first-meo-upperbd}) holds as equality and thus, by~(\ref{eq:eo-mu-marginals}) and part 1,  $\mu$ is optimal for $\MEO(\Ga)$. This concludes the proof of part~2.

For part 4, assume $\mu$ is a pmf in $\cP(\Ga)$ with marginals $\mu_i$, $i=1,2$.
 If $e\in E_1$, then
  \begin{align*}
      \mathbb{P}_{\mu}(e\in\underline{\gamma}) &=
      \sum_{\gamma\in\Gamma}\cN(\gamma,e)\mu(\gamma) \\
& = \sum_{\substack{\gamma_1\in\Gamma_1\\\gamma_2\in\Gamma_2}}
      \cN(\gamma_1\cup\gamma_2,e)\mu(\gamma_1\cup\gamma_2)
&\text{(by (\ref{eq:part-divides}))} \\
&= \sum_{\gamma_1\in\Gamma_1}\cN(\gamma_1,e)
      \sum_{\gamma_2\in\Gamma_2}\mu(\gamma_1\cup\gamma_2)
&\text{(since $e\in E_1$)}\\
&  =
      \sum_{\gamma_1\in\Gamma_1}\cN(\gamma_1,e)\mu_1(\gamma_1)
&\text{(by (\ref{eq:restrict-mu}))}
 \\
&      = \mathbb{P}_{\mu_1}(e\in\underline{\gamma}_1).
  \end{align*}
A similar argument holds when $e\in E_2$.
\end{proof}

In fact, even when a partition doesn't divide the family, the serial
rule still provides a bound.
\begin{corollary}
  Let $\Gamma\subset 2^E$ be a family of objects on the graph $G=(V,E)$,
  let $E=E_1\cup E_2$ be a partition of $E$, and let $\Gamma_1$ and $\Gamma_2$ be defined as in~\eqref{eq:Gamma-i}.  Then
  \begin{equation*}
    \Mod_{2}(\Gamma)^{-1} \ge \Mod_{2}(\Gamma_1)^{-1} +
    \Mod_{2}(\Gamma_2)^{-1},
  \end{equation*}
with equality if the partition divides $\Ga$.
\end{corollary}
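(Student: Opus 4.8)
The plan is to reduce the corollary to Theorem~\ref{thm:serial} together with Theorem~\ref{thm:rho-eta-mu-kkt} (specifically the identity $\Mod_2(\hat\Ga)=\Mod_2(\Ga)^{-1}=\MEO(\Ga)$), which converts the claimed inequality into a statement purely about MEO. First I would observe that the equality case is already finished: when $\{E_1,E_2\}$ divides $\Ga$, Theorem~\ref{thm:serial}(1) gives $\MEO(\Ga)=\MEO(\Ga_1)+\MEO(\Ga_2)$, and rewriting each $\MEO$ as $\Mod_2(\cdot)^{-1}$ via~\eqref{eq:meo-mod} yields exactly $\Mod_2(\Ga)^{-1}=\Mod_2(\Ga_1)^{-1}+\Mod_2(\Ga_2)^{-1}$. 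So the real content is the inequality for an arbitrary partition.

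For the inequality I would mimic the ``easy direction'' of the proof of Theorem~\ref{thm:serial}. The key point is that the overlap-splitting identity~\eqref{eq:split-overlap} does \emph{not} use that the partition divides $\Ga$: for any $\ga,\ga'\in\Ga$, writing $\ga_i=\ga\ones_{E_i}$ and $\ga_i'=\ga'\ones_{E_i}$ (which lie in $\Ga_i$ by definition~\eqref{eq:Gamma-i}), one has $C(\ga,\ga')=C(\ga_1,\ga_1')+C(\ga_2,\ga_2')$ simply because $E=E_1\sqcup E_2$. Hence for any $\mu\in\cP(\Ga)$ with marginals $\mu_i\in\cP(\Ga_i)$, the computation leading to~\eqref{eq:eo-mu-marginals} goes through verbatim, giving $\bE_\mu[C(\underline\ga,\underline\ga')]=\bE_{\mu_1}[C(\underline{\ga_1},\underline{\ga_1'})]+\bE_{\mu_2}[C(\underline{\ga_2},\underline{\ga_2'})]\ge\MEO(\Ga_1)+\MEO(\Ga_2)$. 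Applying this to an MEO-optimal $\mu$ for $\Ga$ and invoking~\eqref{eq:eta-energy}--\eqref{eq:meo-mod} to identify the left side with $\MEO(\Ga)=\Mod_2(\Ga)^{-1}$ (and likewise for $\Ga_1,\Ga_2$) gives the desired inequality
\[
\Mod_2(\Ga)^{-1}=\MEO(\Ga)\ge\MEO(\Ga_1)+\MEO(\Ga_2)=\Mod_2(\Ga_1)^{-1}+\Mod_2(\Ga_2)^{-1}.
\]

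The one place requiring a small check — and the only thing I would call a potential obstacle — is that $\mu_i$, the push-forward of $\mu$ under $\psi_i$, is genuinely a competitor for $\MEO(\Ga_i)$, i.e. that $\Ga_i$ as \emph{defined} by~\eqref{eq:Gamma-i} is precisely the set on which $\mu_i$ is supported and that $\bE_{\mu_i}[C(\underline{\ga_i},\underline{\ga_i'})]\ge\MEO(\Ga_i)$ by definition of MEO as a minimum over $\cP(\Ga_i)$. This is immediate from~\eqref{eq:Gamma-i} and~\eqref{eq:restrict-mu}, so the argument is essentially a bookkeeping restatement of the first half of the proof of Theorem~\ref{thm:serial} with the divisibility hypothesis deleted wherever it was not actually used. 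I would phrase the proof as: ``Retaining the notation of Theorem~\ref{thm:serial}, note that~\eqref{eq:split-overlap} and hence~\eqref{eq:eo-mu-marginals} hold for \emph{any} partition; applying this to an optimal $\mu$ for $\MEO(\Ga)$ and using~\eqref{eq:meo-mod} gives the inequality, while the equality statement is Theorem~\ref{thm:serial}(1).''
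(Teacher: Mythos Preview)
Your proof is correct but takes a different route from the paper's. The paper argues on the modulus side: since one always has $\Gamma\subseteq\Gamma_1\oplus\Gamma_2$, monotonicity of modulus gives $\Mod_2(\Gamma)\le\Mod_2(\Gamma_1\oplus\Gamma_2)$, and then Theorem~\ref{thm:serial} applied to the family $\Gamma_1\oplus\Gamma_2$ (which \emph{is} divided by the partition, with the same induced families $\Gamma_1,\Gamma_2$) computes the right-hand side as $\bigl(\Mod_2(\Gamma_1)^{-1}+\Mod_2(\Gamma_2)^{-1}\bigr)^{-1}$. You instead work on the MEO side, observing that~\eqref{eq:split-overlap} and hence~\eqref{eq:eo-mu-marginals} in the proof of Theorem~\ref{thm:serial} never used divisibility, so they directly yield $\MEO(\Gamma)\ge\MEO(\Gamma_1)+\MEO(\Gamma_2)$. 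The paper's version treats Theorem~\ref{thm:serial} as a black box and adds only a one-line monotonicity step; your version opens the box and isolates exactly which half of its proof survives without divisibility. Both are short, and both ultimately rely on the identification $\MEO=\Mod_2^{-1}$ for general $\Gamma\subset 2^E$, which is stated in~\eqref{eq:meo-mod} for spanning trees but whose derivation in Section~\ref{sec:spt-meo} goes through verbatim for any such family.
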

\begin{proof}
  This is a consequence of Theorem~\ref{thm:serial} and a monotonicity
  property of modulus.  Note that, given an arbitrary partition of $E$, it
  is always the case that $\Gamma\subseteq\Gamma_1\oplus\Gamma_2$. This
  implies that $\Adm(\Gamma)\supseteq\Adm(\Gamma_1\oplus\Gamma_2)$
  and, therefore, that
  \begin{equation*}
    \Mod_{2}(\Gamma) \le \Mod_{2}(\Gamma_1\oplus\Gamma_2).
  \end{equation*}
\end{proof}

This serial rule can be  generalized to partitions with a larger number of parts.
In the case of spanning tree modulus, this
allows us to restrict attention to biconnected graphs.  Recall that a
graph is called {\it (vertex) biconnected} if it is impossible to disconnect
the graph by removing a single vertex.  Every connected graph has a
unique decomposition into biconnected components that are connected to
each other through \emph{articulation points} (i.e., points whose
removal \emph{does} disconnect the graph).

\begin{theorem}\label{thm:serial-spt}
Let $\Ga=\Ga_G$ be the family of all spanning trees of a graph $G=(V,E)$, and let
$E=\cup_{i=1}^r E_i$ be the partition of $E$ into biconnected components. Then this partition divides $\Ga$, meaning that every spanning tree $\ga$ of $G$ is a concatenation of  trees $\ga_i$, spanning each biconnected component $E_i$. Let $\Gamma_i$ be
  defined as in~\eqref{eq:Gamma-i} for $i=1,2,\ldots,r$.  Then
  \begin{equation*}
    \Mod_{2}(\Gamma)^{-1} = \sum_{i=1}^r\Mod_{2}(\Gamma_i)^{-1}.
  \end{equation*}
\end{theorem}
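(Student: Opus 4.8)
The plan is to reduce the statement to two ingredients: a purely combinatorial lemma identifying spanning trees of $G$ with tuples of spanning trees of its blocks (this is exactly the claim that the block partition \emph{divides} $\Gamma$ in the sense of Section~\ref{sec:serial-rule}), and then an iteration of the two‑part serial rule already established in Theorem~\ref{thm:serial} and the corollary following it.

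\textbf{Step 1 (the structural lemma).} I would first prove: a set $T\subseteq E$ is a spanning tree of $G$ if and only if $T\cap E_i$ is a spanning tree of the block $G(V_i)$ for every $i=1,\dots,r$, where $V_i$ denotes the vertex set of the $i$‑th biconnected component. The heart of this is the sublemma that any simple path in $G$ between two vertices $u,v$ lying in a common block $B$ uses only edges of $E_B$: if such a path used an edge $e'\notin E_B$, then letting $x\in V_B$ be the last vertex reached before the path leaves $E_B$, one can splice the portion of the path after $x$ with an $E_B$‑edge incident to $x$ to produce a simple cycle containing both $e'$ and an edge of $E_B$, contradicting the description of blocks as the equivalence classes of edges under ``lying on a common simple cycle.'' Given this sublemma, for a spanning tree $T$ the restriction $T\cap E_i$ already connects every pair of vertices of $V_i$ (the unique $T$‑path between them stays in $E_i$), so $T\cap E_i$ is connected, spanning $V_i$, and acyclic, hence a spanning tree of $G(V_i)$. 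Conversely, if each $T\cap E_i$ is a spanning tree of its block, then $T$ is connected — walk through the (connected) block–cut tree, passing from block to block through shared articulation points — and $\lvert T\rvert=\sum_i(\lvert V_i\rvert-1)=\lvert V\rvert-1$, the count following from the block–cut tree being a tree; hence $T$ is a spanning tree of $G$. In particular the induced family $\Gamma_i$ of~\eqref{eq:Gamma-i} is exactly the family of spanning trees of $G(V_i)$, and the block partition divides $\Gamma$.

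\textbf{Step 2 (iterating the serial rule).} Write $\hat E:=E_2\cup\cdots\cup E_r$. Step 1 shows that the two‑part partition $E=E_1\cup\hat E$ divides $\Gamma$, because a candidate $\gamma$ is a spanning tree of $G$ iff $\gamma\cap E_1\in\Gamma_1$ and $\gamma\cap\hat E$ lies in the family induced on $\hat E$ — the latter being precisely $\{T':\ T'\cap E_i\text{ is a spanning tree of }G(V_i),\ i\ge 2\}$. Hence the corollary following Theorem~\ref{thm:serial} applies with equality, giving $\Mod_{2}(\Gamma)^{-1}=\Mod_{2}(\Gamma_1)^{-1}+\Mod_{2}(\hat\Gamma)^{-1}$, where $\hat\Gamma$ is the induced family on $\hat E$. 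Since $\hat\Gamma$ has the same product structure over the blocks $E_2,\dots,E_r$, the same argument repeats verbatim, and a straightforward induction on $r$ yields $\Mod_{2}(\Gamma)^{-1}=\sum_{i=1}^r\Mod_{2}(\Gamma_i)^{-1}$. Equivalently, one may first record the multi‑part generalization of Theorem~\ref{thm:serial} flagged in the remark preceding this theorem and apply it a single time.

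The main obstacle is Step 1, specifically the path‑stays‑in‑its‑block sublemma, where one must be careful with the degenerate blocks (bridges, $\lvert V_i\rvert=2$) and with the case in which the vertex at which the path first leaves $B$ is $u$ itself. Everything downstream — the edge count via the block–cut tree, connectivity of the glued tree, and the induction in Step 2 — is routine once the lemma is available. Since this structural fact about block decompositions and spanning trees is essentially classical, in the final write‑up it may suffice to present the cycle argument briefly and cite a standard graph‑theory reference.
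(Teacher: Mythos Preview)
Your approach is correct and matches the paper's, which gives only a two-sentence proof: ``This is a consequence of Theorem~\ref{thm:serial} and the fact that the biconnected partition of $G$ divides the family of spanning trees.'' You are simply supplying the details the paper leaves implicit---the block-decomposition lemma in Step~1 and the passage from the two-part serial rule to $r$ parts in Step~2---so there is no divergence in strategy, only in level of detail. One minor remark: your cycle-splicing sketch for the path-stays-in-its-block sublemma is a bit loose as written (it is cleaner to argue via articulation points: leaving block $B$ forces the path through a cut vertex $a$, and returning to $B$ forces it through $a$ again, violating simplicity), but you already flag this as the spot needing care, and the fact is standard.
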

\begin{proof}
  This is a consequence of Theorem~\ref{thm:serial} and the fact that
  the biconnected partition of $G$ divides the family of spanning
  trees.
\end{proof}

Essentially, in order to understand spanning tree
modulus, it is sufficient to understand it on biconnected graphs.  A
pmf $\mu\in\P(\Gamma)$ is optimal for $\MEO(\Ga)$ if and
only if its restrictions $\mu_i\in\P(\Gamma_i)$ are optimal for
the $\MEO$ problems of the associated biconnected components.

\section{Homogeneous and uniform graphs}\label{sec:homog-unif}

The examples of Section~\ref{sec:first-examples} suggest the following
definitions.

\begin{definition}\label{def:homog-unif}
  Let $G=(V,E)$ be given and let $\Ga=\Ga_G$ be the family of spanning trees of $G$. Let $\rho^*$ and $\eta^*$ be the unique extremal densities for $\Mod_{2}(\Ga)$ and $\Mod_{2}(\hat{\Ga})$ respectively.
  Then $G$ is called \emph{homogeneous} (with respect to spanning tree
  modulus) if $\eta^*$ is constant, or equivalently by (\ref{eq:rho-vs-eta-kkt}), if $\rho^*$ is constant.

 The graph $G=(V,E)$ is called \emph{uniform} (with respect
  to spanning tree modulus) if the uniform pmf $\mu_0$ defined
  as
  \begin{equation}\label{eq:mu-0}
  \mu_0(\ga) = \frac{1}{|\Ga|}
  \end{equation}
  is optimal for $\MEO(\Ga_G)$---that
  is, by (\ref{eq:pdfeas-kkt}), if $\eta^*=\N^T\mu_0$.
\end{definition}

In Figure~\ref{fig:first-examples}, for example,~(b) and~(c) are
homogeneous, while~(a) and (c) are uniform.  In what follows, we will
examine these two classes of graphs carefully and then show that every
graph can be decomposed into a sequence of homogeneous graphs through
a process called deflation.

\subsection{Homogeneous graphs}

An interesting property of spanning tree modulus that distinguishes it
from the modulus of other families on a graph (paths or cuts, for
example) is that the row sums of $\N$ are identical. Indeed, since every
spanning tree uses $|V|-1$ edges, it follows that
\begin{equation}
  \label{eq:row-sums}
  \sum_{e\in E}\N(\gamma,e) = |V|-1\quad\forall\gamma\in\Gamma.
\end{equation}
This has implications for the distribution of edge usage
probabilities. Define the {\it expectation} and {\it
  variance} of a vector $\xi\in \R^E$ as usual:
\[
\bE(\xi):= \frac{1}{|E|}\sum_{e\in E}\xi(e)
\quad\text{and}\quad \Var(\xi):= \bE(\xi^2) -
\left(\bE(\xi)\right)^2
\]
where the square in $\xi^2$ is applied element-wise.
\begin{lemma}
  \label{lem:avg-eta}
  Let $G=(V,E)$ be given, and let $\Ga_G$ be the family of
  spanning trees of $G$. Let $\mu\in\P(\Gamma_G)$ be a pmf and let
  $\eta = \N^T\mu$ be the corresponding edge usage probabilities.  Then
  \begin{equation*}
    \bE(\eta) = \frac{|V|-1}{|E|}.
  \end{equation*}
\end{lemma}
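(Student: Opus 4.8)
The plan is to compute $\bE(\eta)$ directly from its definition by interchanging a finite double sum. First I would expand
$$
\bE(\eta) = \frac{1}{|E|}\sum_{e\in E}\eta(e)
= \frac{1}{|E|}\sum_{e\in E}\left(\N^T\mu\right)(e)
= \frac{1}{|E|}\sum_{e\in E}\sum_{\ga\in\Gamma_G}\N(\ga,e)\,\mu(\ga),
$$
using $\eta = \N^T\mu$.

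Next, since both $E$ and $\Gamma_G$ are finite, I would swap the order of summation to obtain
$$
\bE(\eta) = \frac{1}{|E|}\sum_{\ga\in\Gamma_G}\mu(\ga)\left(\sum_{e\in E}\N(\ga,e)\right).
$$
The key input at this point is the row-sum identity~\eqref{eq:row-sums}, namely $\sum_{e\in E}\N(\ga,e) = |V|-1$ for every $\ga\in\Gamma_G$; this is the only structural fact about spanning trees used in the argument, and it is just the statement that every spanning tree has exactly $|V|-1$ edges. Substituting gives $\bE(\eta) = \frac{|V|-1}{|E|}\sum_{\ga\in\Gamma_G}\mu(\ga)$, and since $\mu$ is a pmf on $\Gamma_G$ the remaining sum equals $1$, leaving $\frac{|V|-1}{|E|}$.

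There is no real obstacle here: the computation is essentially the one already carried out for the motivating example in Section~\ref{sec:first-examples}, now stated for an arbitrary graph and an arbitrary $\mu$. The only point requiring (trivial) care is that the interchange of summations is legitimate, which holds because all sums are finite.
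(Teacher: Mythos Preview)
Your proof is correct and follows essentially the same approach as the paper: expand $\bE(\eta)$, interchange the finite sums, and apply the row-sum identity~\eqref{eq:row-sums} together with $\sum_{\ga}\mu(\ga)=1$. The paper's version is just slightly terser, omitting the explicit remarks about the interchange and the pmf summing to one.
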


\begin{proof}
  Assuming $\mu$ and $\eta$ as stated, we have
  \begin{equation*}
    \bE(\eta) = \frac{1}{|E|}\sum_{e\in E}\eta(e)
    = \frac{1}{|E|}\sum_{e\in E}\sum_{\gamma\in\Gamma_G}\mu(\gamma)\N(\gamma,e)
    = \frac{1}{|E|}\sum_{\gamma\in\Gamma_G}\mu(\gamma)\sum_{e\in E}\N(\gamma,e)
  \end{equation*}
  and the result follows from~\eqref{eq:row-sums}.
\end{proof}

An immediate consequence of Lemma~\ref{lem:avg-eta} is the following
theorem.
\begin{theorem}
  \label{thm:variance}
  Let $G=(V,E)$ be given, with spanning trees $\Gamma_G$. Then the
  extremal density $\eta^*$ for $\Mod_{2}(\widehat{\Ga_G})$ is the unique solution to the FEU problem~\eqref{eq:min-variance}.
\end{theorem}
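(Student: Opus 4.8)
The plan is to connect the two optimization problems — the $2$-modulus problem $\Mod_{2}(\widehat{\Ga_G})$ and the FEU problem \eqref{eq:min-variance} — by observing that they minimize the same objective up to an additive constant over the same feasible set. First I would recall from Section~\ref{sec:spt-meo} (in particular the discussion culminating in \eqref{eq:eta-energy} and Theorem~\ref{thm:rho-eta-mu-kkt}) that $\eta^*$ is the unique minimizer of $\cE_2(\eta) = \eta^T\eta = \sum_{e\in E}\eta(e)^2$ over the feasible set $\{\eta : \eta = \N^T\mu,\ \mu\in\cP(\Ga_G)\}$, which is exactly the set of edge usage probability vectors appearing as constraints in \eqref{eq:min-variance}. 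Both problems range over precisely this set.

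The key step is then the identity, for any such $\eta$,
\begin{equation*}
\Var(\eta) = \bE(\eta^2) - \bE(\eta)^2 = \frac{1}{|E|}\sum_{e\in E}\eta(e)^2 - \left(\frac{|V|-1}{|E|}\right)^2 = \frac{1}{|E|}\cE_2(\eta) - \left(\frac{|V|-1}{|E|}\right)^2,
\end{equation*}
where the middle equality uses Lemma~\ref{lem:avg-eta} to pin $\bE(\eta)$ to the constant $\tfrac{|V|-1}{|E|}$, independently of $\mu$. Since $1/|E| > 0$ and the subtracted term is a fixed constant, $\Var(\eta)$ is a strictly increasing affine function of $\cE_2(\eta)$ on the feasible set; hence minimizing $\Var(\eta)$ is equivalent to minimizing $\cE_2(\eta)$, and the two problems have the same unique minimizer, namely $\eta^*$.

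There is essentially no hard obstacle here — the content is entirely in Lemma~\ref{lem:avg-eta}, which has already been established, and in the observation that the feasible set of \eqref{eq:min-variance} coincides with the admissible-density description of $\Mod_{2}(\widehat{\Ga_G})$ coming from the fact (noted before Theorem~\ref{thm:rho-eta-mu-kkt}) that the optimal $\eta^*$ for $\Mod_2(\widehat\Ga)$ lies in the convex hull of $\Ga_G$ and conversely every $\N^T\mu$ is admissible for $\widehat\Ga$. The one point requiring a word of care is uniqueness: I would invoke the strict convexity of $\cE_2$ (equivalently, of $\rho \mapsto \|\rho\|_2^2$) on the closed convex admissible set to conclude the minimizer of $\cE_2$, and therefore of $\Var$, is unique, so that "the unique solution to the FEU problem" is well-defined and equals $\eta^*$. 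The whole argument is a two-line reduction once the constancy of the mean is in hand.
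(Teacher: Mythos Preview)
Your proposal is correct and follows essentially the same route as the paper: both arguments use Lemma~\ref{lem:avg-eta} to fix $\mathbb{E}(\eta)$ as a constant independent of $\mu$, rewrite $\cE_2(\eta)$ and $\Var(\eta)$ as affine functions of one another, and conclude that the two minimization problems over the common feasible set $\{\eta=\N^T\mu:\mu\in\cP(\Ga_G)\}$ share the unique minimizer $\eta^*$. Your added remark on uniqueness via strict convexity of $\cE_2$ is a welcome clarification that the paper leaves implicit.
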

\begin{proof}
  Let $\eta$ be any admissible density for the Fulkerson dual family $\widehat{\Gamma_G}$. By (\ref{eq:pdfeas-kkt}), $\Mod_{2}(\widehat{\Ga_G})$ is also the value of the following minimization problem:
 \begin{equation*}
    \label{eq:prob-interpretation}
    \begin{split}
      \text{\rm minimize}\quad &
      \cE_{2}(\eta)\\
      \text{\rm subject to}\quad & \eta(e) =
      (\cN^T\mu)(e)\;
      \forall e\in E,\\
      & \mu\in\P(\Gamma_G).
    \end{split}
  \end{equation*}
Note that the energy of $\eta$ can be written as:
\begin{equation*}
      \E_{2}(\eta) = \sum_{e\in E}\eta(e)^2
      = |E|\left(\Var(\eta)-\mathbb{E}(\eta)^2\right)
\end{equation*}
Hence, by Lemma~\ref{lem:avg-eta},
\begin{equation}\label{eq:energy-variance}
 \E_{2}(\eta)
 = |E|\Var(\eta)-\frac{(|V|-1)^2}{|E|}.
\end{equation}
  establishing a straightforward equivalence between $\Mod_{2}(\widehat{\Ga_G})$ and the FEU problem.
\end{proof}
The following is a useful criterion for showing that a graph is homogeneous.
\begin{corollary}
  \label{cor:homog-density}
  Let $G=(V,E)$ be given, let $\Gamma_G$ be the family of
  spanning trees of $G$, and let $\widehat{\Gamma_G}$ be the Fulkerson dual of $\Ga_G$.  Define the density
  \begin{equation}\label{eq:eta-sigma}
    \eta_{\hom}(e) := \frac{|V|-1}{|E|}.
  \end{equation}
  Then,
  \begin{equation}\label{eq:homog-bound}
    \Mod_{2}(\widehat{\Gamma_G})\ge \E_{2}(\eta_{\hom})=\frac{(|V|-1)^2}{|E|}.
  \end{equation}
Moreover, the following are equivalent
\begin{itemize}
\item[(i)] $G$ is homogeneous,
\item[(ii)] $\eta_{\hom}\in\Adm(\widehat{\Gamma_G})$,
\item[(iii)] equality holds in (\ref{eq:homog-bound}).
\end{itemize}
\end{corollary}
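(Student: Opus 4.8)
The plan is to read everything off Theorem~\ref{thm:variance} and Lemma~\ref{lem:avg-eta}. From the proof of Theorem~\ref{thm:variance}, $\Mod_{2}(\widehat{\Gamma_G})$ is the minimum of $\E_{2}(\eta)$ over all densities $\eta=\cN^T\mu$ with $\mu\in\P(\Gamma_G)$, and every such $\eta$ satisfies the energy--variance identity $\E_{2}(\eta)=|E|\,\Var(\eta)+\frac{(|V|-1)^2}{|E|}$ (this is~\eqref{eq:energy-variance} together with $\bE(\eta)=\frac{|V|-1}{|E|}$ from Lemma~\ref{lem:avg-eta}). Since $\Var(\eta)\ge 0$ always, it follows that $\E_{2}(\eta)\ge\frac{(|V|-1)^2}{|E|}$ for every admissible $\eta$, and minimizing over $\mu$ gives the bound~\eqref{eq:homog-bound}; the middle equality $\E_{2}(\eta_{\hom})=\frac{(|V|-1)^2}{|E|}$ is the one-line computation $\sum_{e\in E}\eta_{\hom}(e)^2=|E|\,\bigl(\tfrac{|V|-1}{|E|}\bigr)^{2}$.

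For the three-way equivalence I would establish (i)$\Rightarrow$(iii)$\Rightarrow$(i) together with (ii)$\Leftrightarrow$(iii). For (i)$\Rightarrow$(iii): if $G$ is homogeneous then $\eta^*$ is constant, and by Lemma~\ref{lem:avg-eta} the constant must equal $\bE(\eta^*)=\frac{|V|-1}{|E|}$, so $\eta^*=\eta_{\hom}$ and hence $\Mod_{2}(\widehat{\Gamma_G})=\E_{2}(\eta^*)=\E_{2}(\eta_{\hom})$, which is equality in~\eqref{eq:homog-bound}. For (iii)$\Rightarrow$(i): if equality holds, the optimal density obeys $\E_{2}(\eta^*)=\frac{(|V|-1)^2}{|E|}$, so the energy--variance identity forces $\Var(\eta^*)=0$; a nonnegative vector of zero variance is constant, which is precisely the definition of homogeneity. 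For (ii)$\Rightarrow$(iii): if $\eta_{\hom}\in\Adm(\widehat{\Gamma_G})$, then $\Mod_{2}(\widehat{\Gamma_G})\le\E_{2}(\eta_{\hom})$ straight from the definition of modulus as an infimum over admissible densities, and combining with~\eqref{eq:homog-bound} forces equality. For (iii)$\Rightarrow$(ii): the argument just given for (iii)$\Rightarrow$(i) also yields $\eta^*=\eta_{\hom}$, and $\eta^*$ is admissible by definition, so $\eta_{\hom}\in\Adm(\widehat{\Gamma_G})$.

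The only point requiring a little care is that the bound and the energy--variance identity are being applied to the \emph{optimal} density $\eta^*$, which one must know to be of the form $\cN^T\mu^*$; this is exactly what is recorded in the discussion preceding Theorem~\ref{thm:rho-eta-mu-kkt} (the unique optimal $\eta^*$ for $\Mod_{2}(\widehat{\Gamma_G})$ lies in the convex hull of $\Gamma_G$, hence equals $\cN^T\mu^*$ for some $\mu^*\in\P(\Gamma_G)$). With that in place there is no real obstacle: the corollary is essentially the observation that, among the edge densities arising from a pmf on spanning trees, the energy is minimized exactly by the constant density, whose value is pinned down by Lemma~\ref{lem:avg-eta}.
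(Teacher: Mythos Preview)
Your proposal is correct and follows essentially the same route as the paper: both derive the bound~\eqref{eq:homog-bound} from the energy--variance identity~\eqref{eq:energy-variance} combined with Lemma~\ref{lem:avg-eta}, and both establish the equivalences by showing that homogeneity forces $\eta^*=\eta_{\hom}$ and that equality in the bound forces $\Var(\eta^*)=0$. Your explicit remark that the identity applies because $\eta^*=\cN^T\mu^*$ for some $\mu^*\in\P(\Gamma_G)$ is a point the paper's proof leaves implicit, so your version is if anything a touch more careful.
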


\begin{proof}
  Let
  $\eta\in\Adm(\widehat{\Gamma_G})$.  Then
  \begin{equation}\label{eq:homog-var-bound}
    \Var(\eta) \ge 0 = \Var(\eta_{\hom}).
  \end{equation}
  Using~\eqref{eq:energy-variance},
\begin{equation}\label{eq:homog-energy-bound}
\cE_{2}(\eta)\ge \cE_{2}(\eta_{\hom})=\frac{(|V|-1)^2}{|E|},
\end{equation}
 and minimizing over
  $\eta\in\Adm(\widehat{\Gamma_G})$ yields~\eqref{eq:homog-bound}.

By Definition~\ref{def:homog-unif}, $G$ is homogeneous if and only if the optimal density $\eta^*$ for $\Mod_{2}(\widehat{\Ga_G})$ is constant.  By Lemma~\ref{lem:avg-eta} this is equivalent to the statement that $\eta^*=\eta_{\hom}$.  This shows that (i) implies both (ii) and (iii).  Moreover, if $\eta_{\hom}$ is admissible, then it is optimal by~\eqref{eq:homog-energy-bound}, showing that (ii) implies (i).  Finally, to see that (iii) implies (i), suppose equality holds in (\ref{eq:homog-bound}). Then $\eta=\eta^*$ attains the bounds in~\eqref{eq:homog-var-bound} and~\eqref{eq:homog-energy-bound}.  This implies that $\eta^*$ is constant and thus that $G$ is homogeneous.
\end{proof}

Combining this with the probabilistic interpretation provides another
characterization of homogeneous graphs.

\begin{corollary}
  \label{cor:homog-pmf}
  A graph $G=(V,E)$ is homogeneous if and only if there exists
  a pmf $\mu\in\P(\Gamma_G)$ such that $\bP_\mu(e\in\underline{\ga})$ does not depend on $e$.
  Moreover, if such a pmf $\mu$ exists, then it is necessarily optimal for $\MEO(\Ga_G)$, and
  \begin{equation}\label{eq:pmu-homog}
    \mathbb{P}_{\mu}\left(e\in\underline{\gamma}\right)
    = \frac{|V|-1}{|E|}.
  \end{equation}
\end{corollary}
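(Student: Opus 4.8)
The plan is to deduce everything from Corollary~\ref{cor:homog-density}, specifically from the equivalence of (i) and (ii), together with the probabilistic interpretation of $\eta$-densities developed in Section~\ref{sec:spt-meo} and the energy identity~\eqref{eq:eta-energy}. Throughout, for a pmf $\mu\in\P(\Ga_G)$ I write $\eta=\N^T\mu$ for the associated edge usage vector, so that $\eta(e)=\bP_\mu(e\in\underline{\ga})$, and I recall two facts already in hand: first, by convexity $\eta=\N^T\mu\in\Adm(\widehat{\Ga_G})$ for \emph{every} $\mu\in\P(\Ga_G)$; second, the unique optimal density $\eta^*$ for $\Mod_2(\widehat{\Ga_G})$ is itself of the form $\N^T\mu^*$ for some $\mu^*\in\P(\Ga_G)$, which is exactly an optimal pmf for $\MEO(\Ga_G)$.

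For the forward direction, suppose $G$ is homogeneous. Then by Definition~\ref{def:homog-unif} the optimal $\eta^*$ is constant, and by Lemma~\ref{lem:avg-eta} this forces $\eta^*=\eta_{\hom}$, i.e.\ $\eta^*(e)=\frac{|V|-1}{|E|}$. Taking $\mu=\mu^*$ with $\eta^*=\N^T\mu^*$ gives a pmf with $\bP_\mu(e\in\underline{\ga})=\eta^*(e)$ independent of $e$, which is the desired pmf. For the converse, suppose some $\mu\in\P(\Ga_G)$ has $\eta(e)=\bP_\mu(e\in\underline{\ga})$ independent of $e$; say $\eta\equiv c$. By Lemma~\ref{lem:avg-eta}, $c=\bE(\eta)=\frac{|V|-1}{|E|}$, so $\eta=\eta_{\hom}$. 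But $\eta=\N^T\mu\in\Adm(\widehat{\Ga_G})$, so $\eta_{\hom}\in\Adm(\widehat{\Ga_G})$, which is condition (ii) of Corollary~\ref{cor:homog-density}; hence $G$ is homogeneous. This also pins down the value $\bP_\mu(e\in\underline{\ga})=\frac{|V|-1}{|E|}$ claimed in~\eqref{eq:pmu-homog}.

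It remains to prove the ``moreover'': any such $\mu$ is optimal for $\MEO(\Ga_G)$. From the previous paragraph, the associated $\eta=\eta_{\hom}$ is admissible for $\widehat{\Ga_G}$, so by the equivalence (ii)$\Rightarrow$(iii) in Corollary~\ref{cor:homog-density}, equality holds in~\eqref{eq:homog-bound}, i.e.\ $\cE_2(\eta_{\hom})=\Mod_2(\widehat{\Ga_G})$, so $\eta_{\hom}$ is the optimal density. On the other hand, the energy computation~\eqref{eq:eta-energy} gives $\cE_2(\eta)=\sum_{e\in E}\bP_\mu(e\in\underline{\ga})^2=\bE_\mu|\underline{\ga}\cap\underline{\ga}'|$. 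Combining, $\bE_\mu|\underline{\ga}\cap\underline{\ga}'|=\Mod_2(\widehat{\Ga_G})=\MEO(\Ga_G)$ by~\eqref{eq:meo-mod}, so $\mu$ attains the minimum expected overlap and is therefore optimal.

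The argument is essentially bookkeeping: nothing here needs a genuinely new idea beyond Corollary~\ref{cor:homog-density}. The one point that needs care is citing, rather than re-deriving, the two structural facts recalled at the outset---that every $\N^T\mu$ lies in $\Adm(\widehat{\Ga_G})$, and that $\eta^*$ is realized as $\N^T\mu^*$ for an $\MEO$-optimal $\mu^*$---since without them the equivalence between the pmf-level statement and the density-level statement of Corollary~\ref{cor:homog-density} does not transfer. Both are established in Section~\ref{sec:spt-meo} (and the latter is part of Theorem~\ref{thm:rho-eta-mu-kkt}), so no real obstacle arises.
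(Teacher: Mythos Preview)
Your proof is correct and follows essentially the same route as the paper's: both directions go through Corollary~\ref{cor:homog-density} via the admissibility of $\N^T\mu$ established in Section~\ref{sec:spt-meo}, and Lemma~\ref{lem:avg-eta} pins the constant to $(|V|-1)/|E|$. Your treatment of the ``moreover'' clause is somewhat more explicit than the paper's---you unwind optimality through the energy identity~\eqref{eq:eta-energy} and~\eqref{eq:meo-mod}, whereas the paper simply notes that $\eta=\eta^*$ forces $\mu$ to be optimal---but this is a difference in presentation, not in substance.
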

\begin{proof}
If $G=(V,E)$ is homogeneous, then $\eta^*$ is constant.  So, by~\eqref{eq:pdfeas-kkt}, any optimal $\mu^*$ for the MEO problem satisfies $\eta^*=\cN^T\mu^*$.

Conversely, assume that there is a pmf $\mu\in\cP(\Ga_G)$ such that
$\bP_\mu(e\in\underline{\ga})$ is independent of $e$.  Then $\eta=\mathcal{N}^T\mu$ is admissible, as described in Section~\ref{sec:spt-meo}, and $\eta$ is constant.  Corollary~\ref{cor:homog-density} then shows that $G$ is homogeneous and $\eta=\eta^*$ and, thus, that $\mu$ is optimal.
Moreover, $\eta=\eta_{\hom}$, which gives~\eqref{eq:pmu-homog}.
\end{proof}

An intuitive way to interpret Theorem \ref{thm:variance} and Corollary~\ref{cor:homog-pmf} is that
homogeneity is a ``preferred property'' of a graph. A graph that fails
to be homogeneous must necessarily contain some barrier that excludes
the existence of a pmf that perfectly equalizes the edge usage
probabilities.  The next section exactly characterizes such barriers.

\subsection{Homogeneity and feasible partitions}\label{sec:homog-and-fp}

We begin by defining the weight of a feasible partition (see Definition \ref{def:feasible-part}).

\begin{definition}
	Let $P$ be a feasible partition on $G=(V,E)$.  The \emph{weight} of $P$ is defined as
    \begin{equation*}
    w(P) = \frac{|E|}{k_P-1}.
    \end{equation*}
\end{definition}

The importance of feasible partitions in studying spanning trees comes
from the observation that, if $P$ is a feasible partition for $G$ with $k_P$ parts, then any spanning tree of $G$ must use at least $k_P-1$ edges in
$E_P$.  This provides the following lemmas.
\begin{lemma}\label{lem:fp-avg-eta}
Let $G=(V,E)$ be a graph and let $\eta^*$ be optimal
  for $\Mod_{2}(\widehat{\Ga_G})$, with $\eta^*=\N^T\mu^*$ for any optimal pmf
  $\mu^*$.  Let $P$ be a feasible partition of $G$ with $k_P$ parts.  Then 
  \begin{equation*}
   \frac{1}{|E_P|}\sum_{e\in E_P}\eta^*(e) 
   \ge \frac{k_P-1}{ |E_P|} = w(P)^{-1}.
  \end{equation*}
  If every fair tree $\gamma\in\Gamma_G^f$ has the property that $|\ga\cap E_P|=k_P-1$, then the inequality holds as equality.
\end{lemma}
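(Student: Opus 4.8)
The plan is to exploit the fact that any spanning tree $\ga\in\Ga_G$ must use \emph{at least} $k_P-1$ edges of $E_P$, since contracting each part $V_i$ of the feasible partition $P$ turns $\ga$ into a connected spanning subgraph of the quotient multigraph on $k_P$ nodes, which therefore contains a spanning tree, i.e.\ at least $k_P-1$ edges. Applying this to $\ga=\underline{\ga}\sim\mu^*$ and taking expectations, linearity of expectation gives
\[
\sum_{e\in E_P}\eta^*(e)=\sum_{e\in E_P}\bP_{\mu^*}(e\in\underline{\ga})=\bE_{\mu^*}\,|\underline{\ga}\cap E_P|\ge k_P-1,
\]
and dividing by $|E_P|$ yields the claimed inequality; the equality $\frac{k_P-1}{|E_P|}=w(P)^{-1}$ is just the definition of $w(P)$ (wait---note $w(P)=|E|/(k_P-1)$ as stated, so $w(P)^{-1}=(k_P-1)/|E|$; the displayed $w(P)^{-1}=(k_P-1)/|E_P|$ in the lemma suggests the intended reading is $w(P)=|E_P|/(k_P-1)$, and I would simply invoke whichever normalization the paper has fixed).

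For the equality case, I would first observe that any optimal $\mu^*$ for $\MEO(\Ga_G)$ is supported on fair trees: by Definition~\ref{def:fair-forbidden}, every tree in the support of an optimal pmf is fair, so $\mu^*(\ga)>0$ implies $\ga\in\Ga_G^f$. Hence, under the hypothesis that every fair tree $\ga$ satisfies $|\ga\cap E_P|=k_P-1$, we have $|\underline{\ga}\cap E_P|=k_P-1$ with $\mu^*$-probability one, so the expectation above equals exactly $k_P-1$ rather than merely bounding it from below, and the inequality collapses to equality. It is worth emphasizing here that this uses the characterization (Theorem~\ref{thm:rho-eta-mu-kkt}, equation~\eqref{eq:pdfeas-kkt}) that $\eta^*=\N^T\mu^*$ for \emph{any} optimal $\mu^*$, together with the fact (noted in Section~\ref{sec:spt-meo}) that the optimal $\eta^*$ does not depend on the choice of optimal $\mu^*$; this is what makes the left-hand side well defined independent of $\mu^*$.

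The only genuinely substantive point---and the one I would state carefully---is the lower bound $|\ga\cap E_P|\ge k_P-1$ for an arbitrary spanning tree, i.e.\ the claim that a feasible partition forces every spanning tree to cross between parts at least $k_P-1$ times. This is the contraction argument above: collapsing each connected induced subgraph $G(V_i)$ keeps $\ga$ spanning and connected on the $k_P$ super-nodes (connectivity of $G(V_i)$ is exactly what guarantees that $\ga$ restricted to $V_i$, together with the internal edges, keeps $V_i$ in one piece after contraction), and a connected graph on $k_P$ vertices has at least $k_P-1$ edges. Everything else is linearity of expectation and the definition of a fair tree, so I expect no real obstacle beyond phrasing this crossing bound cleanly; the rest of the proof is essentially a two-line computation.
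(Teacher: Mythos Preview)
Your proposal is correct and is essentially the same as the paper's proof: both swap the order of summation to rewrite $\sum_{e\in E_P}\eta^*(e)$ as $\sum_{\gamma}\mu^*(\gamma)|\gamma\cap E_P|$, invoke the crossing bound $|\gamma\cap E_P|\ge k_P-1$, and then use that $\mu^*$ is supported on fair trees for the equality case. Your parenthetical about $w(P)$ is a genuine typo in the paper's definition (it should read $|E_P|$ rather than $|E|$, as confirmed by its later use in Theorem~\ref{thm:homog-feasible-part-bound}), so your reading is the intended one.
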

\begin{proof}
  In light of the observation that every spanning tree must use at
  least $k_P-1$ edges of $E_P$, we can estimate the average
  \begin{equation}\label{eq:fp-eta-avg}
    \begin{split}
      \frac{1}{|E_P|}\sum_{e\in E_P}\eta^*(e) &=
      \frac{1}{|E_P|}\sum_{e\in
        E_P}\sum_{\gamma\in\Gamma}\mu^*(\gamma)\N(\gamma,e) \\
      &=
      \frac{1}{|E_P|}\sum_{\gamma\in\Gamma}\mu^*(\gamma)\sum_{e\in
        E_P}\N(\gamma,e)\\
      &= \frac{1}{|E_P|}\sum_{\gamma\in\Gamma}\mu^*(\gamma)
      |\ga\cap E_P|
      \ge\frac{k_P-1}{|E_P|}.
    \end{split}
  \end{equation}
  Since, by definition, $\mu^*$ is supported on fair trees, equality holds if every fair tree uses exactly $k_P-1$ edges of $E_P$.
\end{proof}
\begin{lemma}\label{lem:fp-eta}
  Let $G=(V,E)$ be a graph and let $\eta^*$ be optimal
for $\Mod_{2}(\widehat{\Ga_G})$, with $\eta^*=\N^T\mu^*$ for any optimal pmf
  $\mu^*$.  Let $P$ be a feasible partition of $G$ with $k_P$ parts.  Then there exists
  an edge $e\in E_P$ such that
  \begin{equation*}
   \eta^*(e) \ge \frac{k_P-1}{ |E_P|} = w(P)^{-1}.
  \end{equation*}
\end{lemma}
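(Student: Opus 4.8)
The plan is to deduce Lemma~\ref{lem:fp-eta} directly from Lemma~\ref{lem:fp-avg-eta} by an averaging/pigeonhole argument. Lemma~\ref{lem:fp-avg-eta} tells us that the \emph{average} value of $\eta^*$ over the edge set $E_P$ is at least $\frac{k_P-1}{|E_P|} = w(P)^{-1}$. The key observation is simply that the maximum of a finite collection of real numbers is always at least their average.

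Concretely, I would argue as follows. Since $E_P$ is nonempty (a feasible partition has $k_P\ge 2$ parts, so at least one cross edge exists, and in fact $|E_P|\ge k_P-1\ge 1$), we may select
\[
e^*\in\argmax_{e\in E_P}\eta^*(e).
\]
Then
\[
\eta^*(e^*) = \max_{e\in E_P}\eta^*(e) \ge \frac{1}{|E_P|}\sum_{e\in E_P}\eta^*(e) \ge \frac{k_P-1}{|E_P|} = w(P)^{-1},
\]
where the first inequality is the elementary fact that a maximum dominates an average, and the second inequality is exactly the conclusion of Lemma~\ref{lem:fp-avg-eta}. This $e^*$ is the desired edge.

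There is essentially no obstacle here: the content is entirely carried by Lemma~\ref{lem:fp-avg-eta}, and this statement is a one-line corollary. The only point requiring a moment's care is confirming $E_P\ne\emptyset$ so that the maximum is well-defined; this follows immediately from the definition of a feasible partition, since any spanning tree must use at least $k_P-1\ge1$ edges of $E_P$, forcing $|E_P|\ge1$.
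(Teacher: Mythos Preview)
Your proposal is correct and matches the paper's own proof essentially verbatim: the paper simply cites the averaging inequality~\eqref{eq:fp-eta-avg} from Lemma~\ref{lem:fp-avg-eta} and observes that some edge in $E_P$ must have $\eta^*$-value at least the average. Your additional remark that $E_P\ne\emptyset$ is a harmless clarification.
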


\begin{proof}
  This is a consequence of~\eqref{eq:fp-eta-avg}; at least one edge $e\in E_P$ must have usage probability $\eta^*(e)$ that is greater or equal to the average.
\end{proof}

The following lemma establishes useful necessary conditions on fair spanning trees.

\begin{lemma}\label{lem:mu-star-stability}
	Let $\mu^*$ be optimal for the MEO problem and let $\eta^*=\N^T\mu^*$ be the optimal expected edge usage.  Let $\gamma^*$ be a fair spanning tree in the support of $\mu^*$, let $e'\in E\setminus\gamma^*$, and let $C$ be the cycle created by adding $e'$ to $\gamma^*$.  Then
    \begin{equation}\label{eq:eta-attains-max-on-cycle}
    	\eta^*(e') = \max_{e\in C}\eta^*(e).
    \end{equation}
\end{lemma}
\begin{proof}
	Theorem~\ref{thm:rho-eta-mu-kkt} implies that~\eqref{eq:eta-attains-max-on-cycle} is equivalent to the statement
    \begin{equation}\label{eq:rho-attains-max-on-cycle}
    	\rho^*(e') = \max_{e\in C}\rho^*(e).
    \end{equation}
    Suppose this is not true.  Then, we must have
    \begin{equation*}
    	\rho^*(e') < \rho^*(e^*)\qquad\text{where}\qquad
        e^* = \argmax_{e\in C}\rho^*(e)\in\ga^*.
    \end{equation*}
    Let $\gamma'$ be the tree obtained by swapping $e'$ with $e^*$ in $\gamma^*$.  By the complementary slackness condition~\eqref{eq:comp-slack-kkt}, we have $\ell_{\rho^*}(\ga^*)=1$.  But then
    \begin{equation*}
    \ell_{\rho^*}(\ga') = \ell_{\rho^*}(\ga^*) - \rho^*(e^*) + \rho^*(e')
    < 1,
    \end{equation*}
    contradicting the admissibility of $\rho^*$.  Hence, the lemma must hold.
\end{proof}

\begin{theorem}\label{thm:eta-max-feasible-partition}
	Let $\mu^*$ be optimal for the MEO problem and let $\eta^*=\N^T\mu^*$.  Define
    \begin{equation*}
    E^* = \{e\in E : \eta^*(e) =  K\},\qquad
    \text{where}\quad K := \max_{e\in E}\eta^*(e).
    \end{equation*}
    Then there exists a feasible partition $P^*$ such that
    \begin{enumerate}
    \item $E^*=E_{P^*}$,
    \item $|\gamma^*\cap E_{P^*}|= k_{P^*}-1\qquad\forall\gamma^*\in\supp\mu^*$,
    \item $\eta^*(e) =  K = w(P^*)^{-1}\qquad\forall e\in E_{P^*}$,
    \item $w(P^*) = \min\{w(P):\text{$P$ is a feasible partition of $G$}\}$.
    \end{enumerate}
\end{theorem}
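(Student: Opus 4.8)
The plan is to let $P^*$ be the partition of $V$ into the connected components of the subgraph $G_0 := (V, E\setminus E^*)$ obtained by deleting the maximally-used edges, and to verify the four properties in turn. Each part of $P^*$ induces a connected subgraph by construction, and $P^*$ has at least two parts since $E^*\neq\emptyset$: indeed $\sum_{e\in E}\eta^*(e) = \sum_{\gamma}\mu^*(\gamma)(|V|-1) = |V|-1 > 0$ by~\eqref{eq:row-sums} (assuming $G$ has an edge), so $K>0$ and the argmax set $E^*$ is nonempty. I will also invoke the optimal density $\rho^*$ of Theorem~\ref{thm:rho-eta-mu-kkt}, for which~\eqref{eq:rho-vs-eta-kkt} gives $E^* = \{e : \rho^*(e) = \max_{e'}\rho^*(e')\}$, together with the complementary slackness~\eqref{eq:comp-slack-kkt}, which forces $\ell_{\rho^*}(\gamma) = 1$ whenever $\gamma\in\supp\mu^*$. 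Property~(1) has an easy half: if $e\notin E^*$ then $e\in G_0$, so its endpoints lie in a common component of $G_0$, hence in a common part, whence $e\notin E_{P^*}$; thus $E_{P^*}\subseteq E^*$. For the reverse inclusion, suppose $e=\{u,v\}\in E^*$ had both endpoints in one component of $G_0$. Since $\eta^*(e)=K>0$, some $\gamma^*\in\supp\mu^*$ contains $e$; deleting $e$ splits $\gamma^*$ into $T_u\ni u$ and $T_v\ni v$, and a $G_0$-path from $u$ to $v$ must cross the $(T_u,T_v)$ cut along some edge $f$ with $\rho^*(f)<\rho^*(e)=\max\rho^*$ (because $f\in G_0$). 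Exchanging $e$ for $f$ yields a spanning tree $\gamma'$ with $\ell_{\rho^*}(\gamma') = 1 - \rho^*(e) + \rho^*(f) < 1$, contradicting admissibility of $\rho^*$ — the exchange argument underlying Lemma~\ref{lem:mu-star-stability}. Hence $E^* = E_{P^*}$ and $P^*$ is a feasible partition.

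For property~(2), let $\gamma^*_i$ denote the sub-forest of a spanning tree $\gamma^*$ consisting of the edges with both endpoints in $V_i$. Since every tree edge either lies in some $\gamma^*_i$ or joins two parts (hence lies in $E_{P^*}=E^*$), counting gives $|\gamma^*\cap E^*| = |V|-1 - \sum_i|\gamma^*_i| \ge k_{P^*}-1$, with equality exactly when each $\gamma^*_i$ is a spanning tree of $G(V_i)$. So it suffices to show that, for $\gamma^*\in\supp\mu^*$, no $\gamma^*_i$ is disconnected. If some $\gamma^*_i$ were disconnected, pick an edge $f$ of $G(V_i)$ joining two of its components; then $f\notin\gamma^*$, and the $\gamma^*$-path between the endpoints of $f$ cannot remain inside $V_i$, so it crosses into another part along an edge of $E_{P^*}=E^*$. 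Hence the cycle $C$ created by adding $f$ to $\gamma^*$ contains an edge $e$ with $\eta^*(e)=K$, and Lemma~\ref{lem:mu-star-stability} gives $\eta^*(f) = \max_{e''\in C}\eta^*(e'') \ge K$; since $K=\max_{e'\in E}\eta^*(e')$, this forces $f\in E^*$, contradicting that $f$ is internal to $V_i$. This establishes~(2).

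Properties~(3) and~(4) are then short. For~(3), $\eta^*\equiv K$ on $E_{P^*}=E^*$ by definition, and summing $\eta^*=\N^T\mu^*$ over $E^*$ and using~(2) gives $|E^*|\,K = \sum_{\gamma}\mu^*(\gamma)\,|\gamma\cap E^*| = k_{P^*}-1$, i.e.\ $K = w(P^*)^{-1}$ (the equality case of Lemma~\ref{lem:fp-avg-eta}). For~(4), let $P$ be any feasible partition; Lemma~\ref{lem:fp-eta} provides an edge $e\in E_P$ with $\eta^*(e)\ge w(P)^{-1}$, and since $\eta^*(e)\le K = w(P^*)^{-1}$ we conclude $w(P)\ge w(P^*)$, so $P^*$ has minimal weight.

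The crux is property~(2) — equivalently, the structural fact that every tree in the support of an optimal $\mu^*$ restricts to a spanning tree of each part of $P^*$ — together with the nontrivial inclusion $E^*\subseteq E_{P^*}$ in~(1). Both rest on the cut/cycle exchange argument, and the delicate point is to confirm that the exchange edge (respectively, the cycle) genuinely meets $E^*$; once that is secured, the remaining assertions reduce to the averaging estimates already packaged in Lemmas~\ref{lem:fp-avg-eta} and~\ref{lem:fp-eta}.
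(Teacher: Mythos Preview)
Your proof is correct and follows essentially the same approach as the paper's: the same definition of $P^*$ via connected components of $(V,E\setminus E^*)$, the same cycle/cut exchange (Lemma~\ref{lem:mu-star-stability}) for both the hard inclusion $E^*\subseteq E_{P^*}$ and property~(2), and the same appeals to Lemmas~\ref{lem:fp-avg-eta} and~\ref{lem:fp-eta} for~(3) and~(4). One cosmetic remark: your early claim that ``$P^*$ has at least two parts since $E^*\neq\emptyset$'' is not immediate from nonemptiness alone---it follows only after you establish $E^*\subseteq E_{P^*}$ a few lines later---so the ordering is slightly premature, but the logic is complete once that inclusion is in hand.
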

\begin{proof}
	For part 1, observe that removing the edges $E^*$ from $E$ will split the graph $G$ into one or more connected components.  Let $\{V_1,V_2,\ldots,V_k\}$ be the partition of $V$ induced by grouping the vertices in these connected components.  By definition, the vertex-induced graphs $G(V_i)$ are all connected.
    
    Now, let $e^*\in E^*$ be an edge connecting two nodes $x$ and $y$.  In order to establish that $x$ and $y$ belong to different partition parts $V_i$, let us assume the contrary.  Without loss of generality, then, $x,y\in V_1$.  Let $\gamma^*\in\supp\mu^*$ with the property that $e^*\in\gamma^*$.  (Since $\eta^*(e^*)>0$, such a tree must exist.)  If we remove $e^*$ from $\gamma^*$, we split $\gamma^*$ into two connected components, one containing $x$ and the other containing $y$.  Since $G(V_1)$ is connected, there must exist a path between $x$ and $y$ that avoids $E^*$ and some edge $e'$ along this path that connects the two components of $\gamma^*\setminus\{e^*\}$. Adding $e'$ to $\gamma^*$ creates a cycle that includes both $e'$ and $e^*$ and, since $e'\notin E^*$ we have $\eta^*(e')<\eta^*(e^*)$ and Lemma~\ref{lem:mu-star-stability} establishes the contradiction.
    
    The preceding argument shows that whenever $e\in E^*$, $e$ must connect two vertices in distinct partition parts.  In particular, since $E^*$ is nonempty, this implies that $k>1$ so that $\{V_1,\ldots,V_k\}$ is a feasible partition, call it $P^*$, on $G$, and that $E^*\subseteq E_{P^*}$.  On the other hand, any edge $e\in E_{P^*}$ by definition crosses between distinct $V_i$ in the partition.  By the construction of the partition, this $e$ must belong to $E^*$.  This establishes the opposite inclusion, showing that, in fact, $E^*=E_{P^*}$.
    
To see part 2, let $\gamma^*\in\supp\mu^*$.  Since $P^*$ splits $G$ into $k_{P^*}$ connected components, it follows that
    \begin{equation*}
    |\gamma^*\cap E_{P^*}|\ge k_{P^*}-1.
    \end{equation*}
    Suppose that the inequality is strict.  Then there must be at least one component, $G(V_i)$, on which $\gamma^*$ restricts as a forest rather than as a single tree.  Let $x$ and $y$ be two nodes in distinct trees of this forest.  Since $G(V_i)$ is connected, there is a path entirely in $G(V_i)$ that connects these two nodes.  Somewhere along this path, there must exist an edge $e'$ that crosses between two trees in the spanning forest induced by restricting $\gamma^*$ to $G(V_i)$.  Without loss of generality, we may assume that $e'$ connects $x$ to $y$.  The path in $\gamma^*$ that connects $x$ to $y$ must exit $G(V_i)$ and, therefore, must cross an edge $e^*$ of $E_{P^*}$.  Since $e'\notin E^*$, once again we have $\eta^*(e')<\eta^*(e^*)$, and Lemma~\ref{lem:mu-star-stability} establishes a contradiction.
    
To obtain part 3, note that part 2 along with Lemma~\ref{lem:fp-avg-eta} implies that
\begin{equation*}
	 K =  \frac{1}{|E_{P^*}|}\sum_{e\in E_{P^*}}\eta^*(e)
	 =\frac{k_{P^*}-1}{|E_{P^*}|}.
\end{equation*}

For part 4, assume $P'$ is a feasible partition such that $w(P')<w(P^*)$.  Lemma~\ref{lem:fp-eta} implies that there exists an edge $e\in E_{P'}$ such that $\eta^*(e) \ge w(P')^{-1}$.  But then part~3 implies that
    \begin{equation*}
     K \ge \eta^*(e) \ge w(P')^{-1} > w(P^*)^{-1} =  K,
    \end{equation*}
    which is a contradiction.
\end{proof}

Lemma \ref{lem:fp-eta} allows a characterization of homogeneous graphs based
entirely on feasible partitions.

\begin{theorem}\label{thm:homog-feasible-part-bound}
  A graph $G=(V,E)$ is homogeneous if and only if for every
  feasible partition $P$ of $G$ with $k_P$ parts, we have
  \begin{equation}\label{eq:chopra-adm}
    w(P) = \frac{|E_P|}{k_P-1} \ge \frac{|E|}{|V|-1}.
  \end{equation}
\end{theorem}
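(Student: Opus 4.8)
The plan is to prove both directions using the machinery already developed, particularly Corollary~\ref{cor:homog-density} (which says $G$ is homogeneous iff $\eta_{\hom}$ is admissible for $\widehat{\Ga_G}$) and the description of the Fulkerson dual family in~\eqref{eq:feasible-partition-usage}.

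First I would recall from~\eqref{eq:feasible-partition-usage} that the Fulkerson dual family $\widehat{\Ga_G}$ consists exactly of the vectors $\frac{1}{k_P-1}\ones_{E_P}$ as $P$ ranges over all feasible partitions of $G$. Since admissibility of a density $\eta$ for $\widehat{\Ga_G}$ means $\ell_\eta(\hat\ga) = \sum_{e\in E}\cN(\hat\ga,e)\eta(e) \ge 1$ for every $\hat\ga\in\widehat{\Ga_G}$, the condition $\eta_{\hom}\in\Adm(\widehat{\Ga_G})$ translates, for each feasible partition $P$, into
\begin{equation*}
\frac{1}{k_P-1}\sum_{e\in E_P}\eta_{\hom}(e) = \frac{1}{k_P-1}\cdot\frac{|E_P|(|V|-1)}{|E|} \ge 1,
\end{equation*}
which rearranges precisely to~\eqref{eq:chopra-adm}: $\frac{|E_P|}{k_P-1}\ge\frac{|E|}{|V|-1}$. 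Thus the inequality~\eqref{eq:chopra-adm} holding for every feasible partition is literally the same as saying $\eta_{\hom}\in\Adm(\widehat{\Ga_G})$.

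Given this observation, the theorem follows immediately from the equivalence of (i) and (ii) in Corollary~\ref{cor:homog-density}: $G$ is homogeneous if and only if $\eta_{\hom}\in\Adm(\widehat{\Ga_G})$, which by the computation above is if and only if~\eqref{eq:chopra-adm} holds for every feasible partition $P$. I would write out the forward direction (homogeneous $\Rightarrow$ the bound) and the reverse direction (the bound $\Rightarrow$ homogeneous) as the two halves of this single equivalence, being careful to note that the indicator usage $\cN(\hat\ga,e)$ for the dual object associated to $P$ is exactly $\frac{1}{k_P-1}$ on edges of $E_P$ and zero elsewhere.

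I do not anticipate a genuine obstacle here; the only point requiring care is citing the correct characterization of $\widehat{\Ga_G}$ from~\cite{chopra1989} as stated around~\eqref{eq:feasible-partition-usage}, and making sure the normalization constant $\frac{1}{k_P-1}$ is handled consistently when converting the admissibility inequality into the stated weight inequality. Everything else is an algebraic rearrangement combined with a direct appeal to Corollary~\ref{cor:homog-density}.
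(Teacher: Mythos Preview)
Your proposal is correct and essentially matches the paper's proof: the reverse direction is identical, verifying $\eta_{\hom}\in\Adm(\widehat{\Ga_G})$ via the feasible-partition usage vectors~\eqref{eq:feasible-partition-usage} and then invoking Corollary~\ref{cor:homog-density}. For the forward direction the paper instead appeals to Lemma~\ref{lem:fp-eta} applied to $\eta^*=\eta_{\hom}$, but your symmetric use of the equivalence (i)$\iff$(ii) in Corollary~\ref{cor:homog-density} accomplishes the same thing just as cleanly.
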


\begin{proof}
  Let $\eta^*$ be the optimal density for $\Mod_{2}(\widehat{\Ga_G})$.  If $G$ is
  homogeneous, then, by Corollary \ref{cor:homog-density}, $\eta^*=\eta_{\hom}$, so
  \begin{equation*}
  \eta^*(e) = \frac{|V|-1}{|E|}
  \end{equation*}
  for every edge $e\in E$.  Therefore, Lemma~\ref{lem:fp-eta} implies that~\eqref{eq:chopra-adm} holds for every feasible partition $P$.

Conversely,
  suppose~\eqref{eq:chopra-adm} holds for every feasible partition $P$.  If we can show that $\eta_{\hom}\in\Adm(\widehat{\Gamma_G})$ then, by Corollary~\ref{cor:homog-density}, we will
  have that $G$ is homogeneous.
Recall from (\ref{eq:feasible-partition-usage}), that the edge usage matrix for $\widehat{\Ga_G}$ is given by
\[
\cN(P,e)= \frac{1}{k_P-1}\ones_{\{e\in E_P\}}.
\]
So, if $P$ is a
  feasible partition, then
  \begin{equation*}
    \sum_{e\in E}\cN(P,e)\eta_{\hom}(e) =
    \frac{|V|-1}{|E|}\sum_{e\in E_P}\frac{1}{k_P-1}
    = \frac{|V|-1}{|E|}\frac{|E_P|}{k_P-1} \ge 1,
  \end{equation*}
where the last inequality holds by~\eqref{eq:chopra-adm}.
So $\cN\eta_{\hom}\ge\one$ and $\eta_{\hom}\in\Adm(\widehat{\Ga_G})$.
\end{proof}

\subsection{Uniform graphs}

As defined in Definition \ref{def:homog-unif}, a graph $G=(V,E)$ is a uniform graph if the pmf $\mu_0$ defined  in~\eqref{eq:mu-0} is optimal for $\MEO(\Ga_G)$.
Uniform graphs can be understood through the connection to effective
resistance usually attributed to Kirchhoff.  Recall that the effective
resistance of an edge, $R_{\eff}(e)$, is defined as the voltage potential difference required to pass one unit of current from one endpoint of $e$ to the other, when $G$ is viewed as a resistor network
with unit conductances.
\begin{theorem}[Kirchhoff]
  \label{thm:kirchhoff}
  Let $G=(V,E)$ be a graph and let
  $\mu_0$ be defined as in~\eqref{eq:mu-0}.  Then, for every
  $e\in E$,
  \begin{equation*}
     \mathbb{P}_{\mu_0}\left(e\in\underline{\gamma}\right)
    = R_{\eff}(e).
  \end{equation*}
\end{theorem}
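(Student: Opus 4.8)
The plan is to reduce Kirchhoff's theorem to the matrix-tree theorem together with an elementary bijection. Write $e=\{a,b\}$, let $G/e$ be the multigraph obtained by identifying $a$ and $b$ and discarding the resulting self-loops, and $G\setminus e$ the multigraph with $e$ removed. From~\eqref{eq:edge-usage-prob} and the definition~\eqref{eq:mu-0} of the uniform law,
\[
\bP_{\mu_0}(e\in\underline{\gamma}) = \frac{|\{\ga\in\Ga_G : e\in\ga\}|}{|\Ga_G|},
\]
and contracting $e$ is a bijection between the spanning trees of $G$ containing $e$ and the spanning trees of $G/e$, so the numerator equals $|\Ga_{G/e}|$. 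Thus it suffices to prove
\[
R_{\eff}(e)=\frac{|\Ga_{G/e}|}{|\Ga_G|}.
\]

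Next I would invoke the classical identity expressing effective resistance through spanning-forest counts. Let $L$ be the combinatorial Laplacian of $G$. Grounding the network at $b$ and injecting one unit of current at $a$, the vertex potentials $v$ (with $v(b)=0$) solve $\tilde L v=\ones_{\{a\}}$, where $\tilde L$ is $L$ with the row and column of $b$ deleted; hence $R_{\eff}(a,b)=v(a)=(\tilde L^{-1})_{aa}$. By Cramer's rule and the matrix-tree theorem (in its all-minors form, via Cauchy--Binet applied to the incidence matrix), $(\tilde L^{-1})_{aa}=\det(L_{\{a,b\}})/\det(\tilde L)$, where $\det(\tilde L)=|\Ga_G|$ and the principal minor $\det(L_{\{a,b\}})$ equals the number $F(a,b)$ of two-component spanning forests of $G$ in which $a$ and $b$ lie in different components. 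Therefore $R_{\eff}(e)=F(a,b)/|\Ga_G|$.

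Finally I would check that $F(a,b)=|\Ga_{G/e}|$: no such forest uses $e$ (nor a parallel copy of $e$), and merging the two components at the identified vertex carries each such forest to a spanning tree of $G/e$, with inverse obtained by splitting the identified vertex back into $a$ and $b$ (the resulting edge set is acyclic precisely because $e$ and its parallels are absent from $G/e$, so any would-be cycle would project to a cycle of the tree). Combining the two displayed identities gives the theorem. The main obstacle is the second step---the identification of the principal minor $\det(L_{\{a,b\}})$ with the two-forest count $F(a,b)$---which is the genuinely non-elementary ingredient; it is classical, going back to Kirchhoff, and could alternatively just be cited. Everything else is bookkeeping and the contraction/merge bijections.
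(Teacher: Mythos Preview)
The paper does not actually prove this theorem; it is stated as a classical result attributed to Kirchhoff and then used without proof. So there is no ``paper's own proof'' to compare against.

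Your sketch is a correct outline of one standard proof via the matrix--tree theorem. One small stylistic remark: you route through $G/e$ twice---once to identify $|\{\ga\in\Ga_G:e\in\ga\}|$ with $|\Ga_{G/e}|$, and once more to identify the two-forest count $F(a,b)$ with $|\Ga_{G/e}|$. A single bijection would do: removing $e$ from a spanning tree that contains it yields a two-component spanning forest separating $a$ from $b$, and adding $e$ back is the inverse. This gives $|\{\ga:e\in\ga\}|=F(a,b)$ directly, and the passage through $G/e$ is unnecessary. But this is only a matter of economy; the argument as written is correct.
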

\begin{corollary}
  A lower bound on spanning tree modulus is given by
  \begin{equation*}
    \Mod_{2}(\Gamma_G) \ge \left(\sum_{e\in E}R_{\eff}(e)^2\right)^{-1}.
  \end{equation*}
\end{corollary}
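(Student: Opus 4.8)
The plan is to exhibit a single feasible competitor for the dual modulus problem $\Mod_2(\widehat{\Ga_G})$ whose energy equals $\sum_{e\in E}R_{\eff}(e)^2$, and then to invert via Fulkerson duality. First I would take $\mu_0$ to be the uniform pmf from~\eqref{eq:mu-0} and set $\eta_0 := \N^T\mu_0$. By the discussion in Section~\ref{sec:spt-meo} (any $\mu\in\cP(\Ga)$ yields $\N^T\mu\in\Adm(\widehat{\Ga_G})$ by convexity), $\eta_0$ is an admissible density for $\widehat{\Ga_G}$. By Kirchhoff's theorem (Theorem~\ref{thm:kirchhoff}), $\eta_0(e) = \bP_{\mu_0}(e\in\underline{\ga}) = R_{\eff}(e)$ for every $e\in E$.

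Next, since modulus is defined as an infimum of energy over admissible densities (see~\eqref{eq:p-modulus-def}), admissibility of $\eta_0$ immediately gives
\begin{equation*}
\Mod_{2}(\widehat{\Ga_G}) \le \cE_{2}(\eta_0) = \sum_{e\in E}\eta_0(e)^2 = \sum_{e\in E}R_{\eff}(e)^2.
\end{equation*}
(Equivalently, one could invoke Corollary~\ref{cor:weak-duality} with any admissible $\rho$ for $\Ga_G$.)

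Finally, I would invoke the $p=2$ Fulkerson duality relation~\eqref{eq:ptwofulkerson} (or the identity $\Mod_2(\Ga_G) = \Mod_2(\widehat{\Ga_G})^{-1}$ recorded in~\eqref{eq:meo-mod}) to conclude
\begin{equation*}
\Mod_{2}(\Ga_G) = \Mod_{2}(\widehat{\Ga_G})^{-1} \ge \left(\sum_{e\in E}R_{\eff}(e)^2\right)^{-1}.
\end{equation*}
There is no real obstacle here: the only non-routine ingredient is Kirchhoff's theorem, which is assumed, and the argument is just ``plug a natural admissible density into the variational characterization, then dualize.'' The one point worth stating carefully is \emph{why} $\eta_0$ is admissible for $\widehat{\Ga_G}$ rather than for $\Ga_G$ — this is exactly the content of the remark that $\N^T\mu$ lands in $\Adm(\widehat{\Ga_G})$, and it is what makes the inequality go in the stated direction. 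Equality holds precisely when $\mu_0$ is optimal for $\MEO(\Ga_G)$, i.e., when $G$ is uniform.
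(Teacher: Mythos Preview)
Your proposal is correct and is essentially the same argument as the paper's: set $\eta=\N^T\mu_0$, observe it is admissible for $\widehat{\Ga_G}$ so that $\Mod_2(\widehat{\Ga_G})\le\cE_2(\eta)$, identify $\eta(e)=R_{\eff}(e)$ via Kirchhoff, and invert using~\eqref{eq:meo-mod}. Your additional remark that equality holds exactly when $G$ is uniform is a nice touch not made explicit in the paper.
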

\begin{proof}
  Let $\eta=\N^T\mu_0$.  By (\ref{eq:ptwofulkerson}) and Theorem \ref{thm:rho-eta-mu-kkt}, we
  see that
  \begin{equation*}
    \Mod_{2}(\Gamma_G) \ge \E_{2}(\eta)^{-1}
    = \left(\sum_{e\in E}\eta(e)^2\right)^{-1}
  \end{equation*}
  and the result follows from Kirchhoff's Theorem~\ref{thm:kirchhoff}.
\end{proof}
The serial rule developed in
Theorem~\ref{thm:serial-spt} shows that, without loss of generality, we
can limit our study of spanning tree modulus to biconnected graphs. Also, a consequence of the
deflation procedure of Section~\ref{sec:deflation} will be that if $G$
is uniform and biconnected, then $G$ is homogeneous.  As might be
expected, this class of graphs has very special properties.

\begin{theorem}
  A graph $G=(V,E)$ is both homogeneous and uniform if and only
  if the effective resistance $R_{\eff}(e)$ does not depend on
  $e\in E$, i.e.,
  \begin{equation*}
    R_{\eff}(e) \equiv \frac{|V|-1}{|E|}.
  \end{equation*}
\end{theorem}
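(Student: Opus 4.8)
The plan is to reduce everything to the characterizations already established. By Definition~\ref{def:homog-unif}, $G$ is homogeneous precisely when the extremal density $\eta^*$ for $\Mod_{2}(\widehat{\Ga_G})$ is constant, and $G$ is uniform precisely when the uniform pmf $\mu_0$ of~\eqref{eq:mu-0} is optimal for $\MEO(\Ga_G)$, i.e.\ when $\eta^*=\N^T\mu_0$. Kirchhoff's Theorem~\ref{thm:kirchhoff} identifies the $e$-entry of $\N^T\mu_0$ with the effective resistance: $(\N^T\mu_0)(e)=\bP_{\mu_0}(e\in\underline{\ga})=R_{\eff}(e)$. So the statement is essentially the observation that ``$\eta^*$ constant'' together with ``$\eta^*=\N^T\mu_0$'' is the same as ``$R_{\eff}(\cdot)$ constant'', once the value of the constant is pinned down via Lemma~\ref{lem:avg-eta}.

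For the forward implication I would assume $G$ is both homogeneous and uniform. Homogeneity together with Corollary~\ref{cor:homog-density} (or Lemma~\ref{lem:avg-eta}) forces $\eta^*\equiv\frac{|V|-1}{|E|}$. Uniformity gives $\eta^*=\N^T\mu_0$, whose $e$-entry equals $R_{\eff}(e)$ by Kirchhoff. Combining the two yields $R_{\eff}(e)=\frac{|V|-1}{|E|}$ for every $e\in E$.

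For the converse I would assume $R_{\eff}(e)\equiv\frac{|V|-1}{|E|}$ and set $\eta:=\N^T\mu_0$. By Kirchhoff, $\eta(e)=R_{\eff}(e)$ does not depend on $e$, so $\mu_0$ is a pmf whose edge usage probability is independent of the edge. Corollary~\ref{cor:homog-pmf} then immediately gives that $G$ is homogeneous and that $\mu_0$ is optimal for $\MEO(\Ga_G)$; the latter is exactly the definition of $G$ being uniform. This closes the equivalence.

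I do not expect a real obstacle here, since the content lives entirely in the earlier lemmas; the only care needed is in invoking the right combination — Kirchhoff to convert $\mu_0$-usage into $R_{\eff}$, Lemma~\ref{lem:avg-eta} (via Corollary~\ref{cor:homog-density}) to identify the constant as $\frac{|V|-1}{|E|}$, and Corollary~\ref{cor:homog-pmf} for the ``$\mu_0$ optimal $\Rightarrow$ $G$ uniform'' step. For cleanliness one might also note that the constant value is forced: any constant admissible density for $\widehat{\Ga_G}$ must equal $\eta_{\hom}$ by averaging (Lemma~\ref{lem:avg-eta}), so there is no ambiguity about which constant can appear.
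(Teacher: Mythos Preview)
Your proposal is correct and follows essentially the same approach as the paper, which simply cites Corollary~\ref{cor:homog-pmf} and Kirchhoff's Theorem~\ref{thm:kirchhoff}; you have merely unpacked the two directions explicitly and noted where Lemma~\ref{lem:avg-eta} pins down the constant.
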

\begin{proof}
  This is a consequence of Corollary~\ref{cor:homog-pmf}
  and Kirchhoff's Theorem~\ref{thm:kirchhoff}.
\end{proof}
\begin{example}\label{ex:homog-unif}\ 
\begin{itemize}
\item[(a)] Every complete graph is homogeneous and uniform, however the graph formed from $K_3$ by adding two parallel edges to one of the original edges is neither homogeneous nor uniform.
\item[(b)] Figure~\ref{fig:first-examples}(b) gives an example of a nonuniform homogeneous graph.  If edges are added so that the diagonal has multiplicity 3 while all other edges have multiplicity 2, the resulting graph is uniform.
\item[(c)] Every cycle graph $C_N$ is homogeneous and uniform. This is a direct consequence of Corollary~\ref{cor:homog-pmf}. To get a
spanning tree on $C_N$ we remove exactly one edge from the cycle. So each edge $e$
belongs to exactly $N-1$ spanning trees, those formed by removing edges other than $e$. Therefore, the uniform distribution on spanning trees makes the variance of the edge usage probabilities vanish.
\end{itemize}
\end{example}

\section{Deflation}
\label{sec:deflation}

In this section, we introduce the process of \emph{deflating} a
nonhomogeneous graph, essentially decomposing it into
homogeneous components.  We first prove that every graph
contains a homogeneous subgraph, which we call a homogeneous core.
\subsection{The homogeneous core}
\label{sec:homog-core}

It will be useful to treat edge-induced subgraphs $H$ of $G$ simply as subsets of the edge set $E$.
In particular, we will use the sentence ``let $H\subset E$ be a subgraph of $G$''.
\begin{definition}
Let $H\subset E$ be a subgraph of $G$, we say $H$ has the {\it restriction property} if every fair tree $\ga\in\Ga_G^f$ (see Definition~\ref{def:fair-forbidden}) restricts to a spanning tree of $H$. Namely, if $\psi_H$ is the restriction operator defined as in (\ref{eq:restriction-operator}) by $\psi_H(S):=S\cap H$, then
\[
\psi_H(\Ga_G^f)\subset  \Ga_H.
\]
\end{definition}

\begin{theorem}
  \label{thm:component}
  Let $G=(V_G,E_G)$ be a connected multigraph containing at least one edge. Let $\eta^*$ be the optimal density
  for $\Mod_{2}(\widehat{\Ga_G})$. Let
  $H_{\text{min}}\subset E$ be the subgraph where $\eta^*$ attains
  its minimum. Let $H$ be a connected component of $H_{\rm min}$. Then,  the following hold:
  \begin{enumerate}
  \item $H$ has at least one edge.
  \item $H$ has the restriction property.
  \item $H$ is a vertex-induced subgraph of $G$.
  \item $H$ is itself a homogeneous graph.
  \end{enumerate}
\end{theorem}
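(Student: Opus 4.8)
The plan is to establish the four claims roughly in the order listed, with parts (1)--(2) coming together from the same cycle-swapping argument that underlies Lemma~\ref{lem:mu-star-stability} and Theorem~\ref{thm:eta-max-feasible-partition}, part (3) following as a byproduct, and part (4) being the real content, obtained by applying the serial rule (Theorem~\ref{thm:serial}) to the partition $\{H_1, H_2\}$ of $E$ where $H_1 = H$ and $H_2 = E\setminus H$.

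For part (1), I would argue by contradiction: if $H$ were a single vertex with no edges, then $H_{\rm min}$ consists only of isolated vertices, so every edge of $G$ has $\eta^*(e) > \min_e \eta^*(e)$, which is absurd since the minimum is attained somewhere. Actually more carefully: $H$ is a \emph{connected component} of $H_{\rm min}$, and $H_{\rm min}$ is the edge set where $\eta^*$ attains its minimum together with the incident vertices; since $\eta^*$ is a density on $E$ and $G$ has at least one edge, the minimum value $m := \min_{e\in E}\eta^*(e)$ is attained by at least one edge, and the component of $H_{\rm min}$ containing that edge has an edge. One should check every connected component has an edge --- but a connected component of an edge-induced subgraph by definition contains at least one edge (isolated vertices aren't part of an edge-induced subgraph), so this is automatic once we know $H_{\rm min}\neq\emptyset$.

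For parts (2) and (3), the key tool is Lemma~\ref{lem:mu-star-stability}: if $\gamma^*$ is fair and $e'\notin\gamma^*$, then $\eta^*(e') = \max_{e\in C}\eta^*(e)$ where $C$ is the fundamental cycle. The argument should mirror the proof of Theorem~\ref{thm:eta-max-feasible-partition}(1)--(2) almost verbatim. For part (3): I want to show $H$ is vertex-induced, i.e., if $x,y$ are both vertices of $H$ and $e=xy\in E$, then $e\in H$ (equivalently $\eta^*(e)=m$). Suppose not, so $\eta^*(e) > m$. Since $H$ is connected there is a path in $H$ from $x$ to $y$, all of whose edges have $\eta^*$-value $m$. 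Pick a fair tree $\gamma^*$ containing $e$ (exists since $\eta^*(e)>0$); removing $e$ splits $\gamma^*$ into two pieces, and along the path in $H$ there is an edge $e'$ (with $\eta^*(e')=m$) joining the two pieces; the fundamental cycle of $e'$ with respect to $\gamma^*$ contains $e$, so by Lemma~\ref{lem:mu-star-stability}, $\eta^*(e') = \max_{\text{cycle}}\eta^*\ge\eta^*(e)>m$, contradicting $\eta^*(e')=m$. For part (2): let $\gamma^*$ be fair; I claim $\gamma^*\cap H$ is a spanning tree of $H$. It has no cycles (being a subset of the tree $\gamma^*$), so it suffices to show it connects all vertices of $H$. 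If not, pick $x,y$ in $H$ in different components of $\gamma^*\cap H$; since $H$ is connected there's a path in $H$ from $x$ to $y$, and somewhere along it an edge $e'\in H$ crossing between two components of the forest $\gamma^*\cap H$. The path in $\gamma^*$ from $x$ to $y$ must leave $H$ (otherwise it would stay within one component of $\gamma^*\cap H$), hence crosses an edge $e^*$ with $\eta^*(e^*)>m$; this $e^*$ lies on the fundamental cycle of $e'$, so Lemma~\ref{lem:mu-star-stability} gives $\eta^*(e') \ge \eta^*(e^*) > m$, contradicting $e'\in H$.

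For part (4) --- the main obstacle --- I want to show $H$ is homogeneous, i.e., by Corollary~\ref{cor:homog-density} it suffices to show $\eta^*|_H$ is constant (which it is, $\equiv m$) \emph{and} that $m = (|V_H|-1)/|E_H|$, or more robustly, that $\eta^*|_H$ is admissible and optimal for $\Mod_2(\widehat{\Gamma_H})$. Here I would use the serial rule with the partition $E = H \sqcup (E\setminus H)$. The subtlety is that this partition need not \emph{divide} $\Gamma_G$ in general --- but part (2) says every \emph{fair} tree restricts to a spanning tree of $H$, and one shows that for the purpose of the MEO problem we may work with the ``effective'' family of fair trees. Concretely: restrict $\mu^*$ to get marginal $\mu_1^*$ on the family $\Gamma_1 = \{\gamma\cap H : \gamma\in\Gamma_G^f\} \subseteq \Gamma_H$ (the inclusion by part (2)). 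I would argue that $\mu_1^*$, pushed through $\mathcal N$, yields $\eta^*|_H \equiv m$, a \emph{constant} density on $H$; and since constant densities are exactly the homogeneous ones by Corollary~\ref{cor:homog-pmf}, provided $\mu_1^*$ is actually supported on $\Gamma_H$ and is optimal for $\MEO(\Gamma_H)$, we are done. The optimality of $\mu_1^*$ for $\MEO(\Gamma_H)$ is where the serial-rule machinery (or a direct comparison/variational argument using Corollary~\ref{cor:homog-pmf}: a constant $\eta$ that is admissible for $\widehat{\Gamma_H}$ is automatically optimal) must be invoked --- admissibility of the constant density $\eta^*|_H$ for $\widehat{\Gamma_H}$ needs checking, which via Theorem~\ref{thm:homog-feasible-part-bound} reduces to showing every feasible partition $Q$ of $H$ satisfies $w(Q)\ge |E_H|/(|V_H|-1)$. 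The cleanest route: given a feasible partition $Q$ of $H$, extend it to a feasible partition of $G$ by keeping all vertices outside $H$ as singletons (or merging appropriately to preserve connectivity of parts), apply Lemma~\ref{lem:fp-eta} to find an edge $e\in E_Q\subseteq E_H$ with $\eta^*(e)\ge w(Q)^{-1}$; but $\eta^*(e) = m = \min\eta^*$, and since $m\le (|V_H|-1)/|E_H|$ would follow from... hmm, this needs the averaging identity from Lemma~\ref{lem:fp-avg-eta} applied within $H$. I expect the honest proof packages these observations: part (2) makes $H$ behave like a stand-alone graph whose fair trees are exactly the restrictions of $G$'s fair trees, $\eta^*|_H$ is the constant $m$, and constancy plus Corollary~\ref{cor:homog-density}(i)$\Leftrightarrow$(ii) closes it once admissibility of the constant density for $\widehat{\Gamma_H}$ is verified --- and that verification, reducing a feasible partition of $H$ to one of $G$ and invoking Lemma~\ref{lem:fp-eta}, is the step I'd budget the most care for.
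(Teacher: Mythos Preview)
Your approach for parts (1) and (2) matches the paper's: the paper also argues by an edge-swap, observing that a fair tree is $\eta^*$-minimal (via complementary slackness and~\eqref{eq:rho-vs-eta-kkt}) and then swapping $e''$ for $e'$ to contradict minimality---this is Lemma~\ref{lem:mu-star-stability} unpacked. One small point you gloss over (the paper does too, implicitly): when you say the path in $\gamma^*$ ``leaves $H$, hence crosses an edge $e^*$ with $\eta^*(e^*)>m$,'' you need $e^*$ to be \emph{incident} to $H$ for that inequality (an edge in a different component of $H_{\min}$ would also have $\eta^*=m$). Take $e^*$ to be the first edge along the path not in $E_H$; then one endpoint lies in $V_H$ and maximality of $H$ as a component of $H_{\min}$ gives $\eta^*(e^*)>m$.

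Your part (3) differs from the paper's. You argue directly by a cycle-swap. The paper instead uses part (2): if $e=xy$ with $x,y\in V_H$ and $e\notin H$, then every fair tree already connects $x$ to $y$ inside $H$, so $e$ lies in no fair tree, forcing $\eta^*(e)=0$; but $\eta^*(e)>\kappa\ge 0$, a contradiction. Both routes are valid; the paper's is shorter once (2) is in hand, while yours is independent of (2).

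For part (4) you have the right idea and then talk yourself out of it. The paper's proof is exactly your first suggestion: push $\mu^*$ forward to the marginal $\mu_H\in\mathcal P(\Gamma_H)$ (part (2) ensures the support lands in $\Gamma_H$), verify by a direct two-line computation that $\mathbb P_{\mu_H}(e\in\underline\gamma) = \mathbb P_{\mu^*}(e\in\underline\gamma) = \eta^*(e)=\kappa$ for every $e\in E_H$, and invoke Corollary~\ref{cor:homog-pmf}. That corollary does \emph{not} require you to know that $\mu_H$ is optimal for $\MEO(\Gamma_H)$---it says that \emph{any} pmf giving constant edge-usage probability already certifies homogeneity (and is then automatically optimal). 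So your worries about optimality, the serial rule, and the detour through feasible partitions of $H$ are all unnecessary; the marginal computation plus Corollary~\ref{cor:homog-pmf} closes it.
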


\begin{proof}
If $G$ is homogeneous, we are done.  Otherwise, define
\begin{equation}\label{eq:kappa}
\kappa:=\min_{e\in E} \eta^*(e).
\end{equation}
The subgraph
  $H_{\rm min}\subsetneq E$ where $\eta^*$ attains
  its minimum is nonempty.  Let $H$ be a maximal connected
  component of this graph, so that Property  1 holds by construction.  Then, $H$ is a connected, edge induced
  subgraph of $H_{\text{min}}$, with the property that every
  edge $e\in E\setminus H$ incident to $H$,  satisfies
\begin{equation}\label{eq:edge-incident-h}
\eta^*(e)>\kappa.
\end{equation}

Property 2 can be seen by contradiction.  Suppose $\gamma$ is a fair tree that does not restrict to a spanning tree on $H$.  By~\eqref{eq:comp-slack-kkt}, $\gamma$ must be a $\rho^*$-minimal
  spanning tree with $\ell_{\rho^*}(\gamma) = 1$. By (\ref{eq:eta-star-rho-star-p2}),
  $\eta^*$ differs from $\rho^*$ only by a positive
  multiplicative constant. Therefore,
  $\gamma$ is also $\eta^*$-minimal.

  Now, let $\gamma_H$ be the restriction of $\gamma$ to $H$.  By assumption,  $\gamma_H$ is a forest composed of more than one tree.  Let $x,y\in H$ be vertices belonging to distinct subtrees.  Since $H$ is connected, there is a path in $H$ connecting these two vertices and an edge $e'\in H\setminus\ga_H$ along this path that crosses between two distinct trees of $\ga_H$.  Without loss of generality, we may assume this edge is between $x$ and $y$.  Since $\ga$ is a tree, it contains a path connecting $x$ and $y$ and, since $x$ and $y$ are in distinct components of $\ga_H$, this path must include an edge $e''\in\ga\setminus H$.

  By assumption 
  \begin{equation}\label{eq:edge-to-swap}
  \eta^*(e'')>\eta^*(e').
  \end{equation}  Let $\ga'$ be the spanning tree of $G$ formed by replacing the edge $e''$ in $\ga$ by $e'$.  Then, by (\ref{eq:edge-to-swap}),
 \[\ell_{\eta^*}(\gamma') <
  \ell_{\eta^*}(\gamma),
\]
  contradicting the fact that $\gamma$ is a $\eta^*$-minimal spanning tree.  Thus, it must be the case that $\gamma_H$ is a tree.

  For Property 3, we want to show that $H$, which was originally defined as an
  edge-induced subgraph, is in fact vertex induced. In other words, we must verify that if $x,y\in V_H$ and if $e\in E$ connects $x$ to $y$ then $e\in H$.  Suppose, not.  Then, by (\ref{eq:edge-incident-h}),
  $\eta^*(e)>\kappa$.
Let $\mu^*$ be an optimal pmf and let $\gamma$ be any tree in its support.  By Property 2, the unique path in $\gamma$
  connecting $x$ to $y$ lies entirely in $H$, and therefore does not
  contain the edge $e$.  Thus, no tree $\ga$ in the support of $\mu^*$
  contains the edge $e$. Hence,
  \begin{equation*}
    0 \le \kappa < \eta^*(e)
    = \mathbb{P}_{\mu^*}(e\in\underline{\gamma}) = 0,
  \end{equation*}
which is a contradiction.

Finally, to prove Property 4, recall the notations of Section \ref{sec:serial-rule}. Here the partition of $E$ is given by $H$ and $E\setminus H$. Let $\mu^*$ be an optimal pmf for $G$, and let $\mu_H$ be the marginal of $\mu$ on $H$, as defined in (\ref{eq:restrict-mu}).  Fix  $e\in H$, then
\begin{align*}
\bP_{\mu_H}(e \in\underline{\gamma})
& = \sum_{\substack{\gamma \in \Gamma_{H}\\ e \in \gamma}} \mu_H(\ga)=
\sum_{\substack{\gamma \in \Gamma_{H}\\ e \in \gamma}} \sum_{\substack{T\in \Gamma_G\\
\gamma=T\cap H}}\mu^*(T) \\
& =\sum_{\gamma\in \Gamma_{H}}
\sum_{T\in \Gamma_G} \mu^*(T)\ones_{\{e \in \gamma, \gamma=T \cap H\}}\\
&=\sum_{T\in \Gamma_G} \mu^*(T) \sum_{\gamma \in \Gamma_H} \ones_{\{e\in T, \gamma=T \cap H\}}\\
&= \sum_{T \in \Gamma_G}\mu^*(T) \ones_{\{e\in T\}} = \bP_{\mu^*}(e\in \underline{T})\\
&= \eta^*(e)=\kappa.\\
\end{align*}
This means that $\bP_{\mu_H}(e\in\underline{\ga})$ does not depend on $e\in H$.
By Corollary \ref{cor:homog-pmf}, $H$ is homogeneous.
\end{proof}

\begin{definition}\label{def:homog-core}
  Let $H$ be a connected subgraph of $G$, satisfying properties 1--4 in
  Theorem~\ref{thm:component}.  Such a subgraph is called a
  \emph{homogeneous core} of $G$.
\end{definition}

\begin{example}
The house graph shows that homogeneous cores are not unique, because the house itself satisfies properties 1--4 in Theorem~\ref{thm:component}, and so does the three-edge subgraph forming the roof.
\end{example}

Theorem~\ref{thm:component} has an interesting implication when
combined with the serial rule.
 Let  $G=(V_G,E_G)$ be a connected multigraph and let $H=(V_H,E_H)$ be a  homogeneous core of $G$. Following the notations of Section \ref{sec:serial-rule}, set $E_1:=E_H$ and $E_2:=E_G\setminus E_H$.
Then, for $i=1,2$,
let $\psi_i$ be the restriction operators as defined in (\ref{eq:restriction-operator}). Also, let $\Ga_G$  be the family of all spanning
  trees of $G$ and  $\Gamma_G^f$ the family of all fair spanning trees of $G$. We will use similar notation for $H$.

\begin{theorem}\label{thm:comp-divide}
  Let $\Gamma_G$ be the family of spanning trees on
  $G=(V_G,E_G)$ and let $H=(V_H,E_H)$ be a
  vertex-induced subgraph of $G$ with at least one edge and having the restriction property. Consider the largest subset $\Ga^*$ of $\Ga_G$ for which  $\psi_1(\Ga^*)\subseteq\Ga_H$, namely,
  \begin{equation}\label{eq:gamma-star}
    \Gamma^* :=\psi_1^{-1}(\Ga_H)\cap\Ga_G= \{\gamma\in\Gamma_G : \gamma\cap H\in\Gamma_H\}.
  \end{equation} Then, the partition $E_H\cup(E_G\setminus E_H)$ divides $\Gamma^*$:
\begin{equation}\label{eq:comp-divide}
\Gamma^* =\psi_1(\Gamma^*)\oplus\psi_2(\Gamma^*)= \Gamma_H\oplus\psi_2(\Gamma^*).
\end{equation}
\end{theorem}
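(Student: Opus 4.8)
The plan is to identify $\Gamma^*$ with the product family of the spanning trees of $H$ and the spanning trees of the contracted graph. Let $G/H$ be the graph obtained from $G$ by merging all vertices of $V_H$ into a single node $v_H$ and discarding any self-loops; since $H$ is vertex-induced there are no self-loops, the edge set of $G/H$ is exactly $E_2 := E_G\setminus E_H$, and $G/H$ is connected because $G$ is. We may assume $\Gamma_H\neq\emptyset$, i.e.\ that $H$ is connected, since otherwise $\Gamma^*=\emptyset$ and every set in the statement is empty.

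First I would record two elementary facts about this contraction. (i) If $\gamma\in\Gamma_G$ satisfies $\gamma\cap E_H\in\Gamma_H$, then contracting the subtree $\gamma\cap E_H$ inside the tree $\gamma$ yields, being the contraction of a connected subtree of a tree, again a tree; it has $|V_G|-|V_H|$ edges on $|V_G|-|V_H|+1$ vertices, so it is a spanning tree of $G/H$, and its edge set is precisely $\gamma\cap E_2$. (ii) Conversely, if $\tau\in\Gamma_H$ and $\sigma$ is a spanning tree of $G/H$, then $\tau\cup\sigma$ is connected on $V_G$ (one routes through $v_H$ along $\sigma$ and within $V_H$ along $\tau$) and has $(|V_H|-1)+(|V_G|-|V_H|)=|V_G|-1$ edges, hence is a spanning tree of $G$; moreover $(\tau\cup\sigma)\cap E_H=\tau$ because $\sigma\subseteq E_2$ and $H$ is vertex-induced. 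Combining (i) and (ii), the map $\gamma\mapsto(\gamma\cap E_H,\gamma\cap E_2)$ is a bijection of $\Gamma^*$ onto $\Gamma_H\times\Gamma_{G/H}$; in particular $\psi_1(\Gamma^*)=\Gamma_H$ and $\psi_2(\Gamma^*)=\Gamma_{G/H}$.

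The divisibility claim now follows quickly. The inclusion $\Gamma^*\subseteq\psi_1(\Gamma^*)\oplus\psi_2(\Gamma^*)$ holds for any partition of $E$, since $\gamma=(\gamma\cap E_H)\cup(\gamma\cap E_2)$. For the reverse inclusion, take $\gamma_1\in\psi_1(\Gamma^*)=\Gamma_H$ and $\gamma_2\in\psi_2(\Gamma^*)=\Gamma_{G/H}$; fact (ii) gives $\gamma_1\cup\gamma_2\in\Gamma_G$ with $(\gamma_1\cup\gamma_2)\cap E_H=\gamma_1\in\Gamma_H$, so $\gamma_1\cup\gamma_2\in\Gamma^*$. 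Hence $\Gamma^*=\psi_1(\Gamma^*)\oplus\psi_2(\Gamma^*)$, and substituting $\psi_1(\Gamma^*)=\Gamma_H$ yields $\Gamma^*=\Gamma_H\oplus\psi_2(\Gamma^*)$.

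I expect the only real work to lie in the bookkeeping behind facts (i) and (ii): confirming that contracting a connected subtree of a tree produces a tree (so that the edge count is as claimed and no stray self-loops survive, which uses that $H$ is vertex-induced), and that the glued object $\tau\cup\sigma$ is simultaneously connected and acyclic. This is a modest obstacle. Note that the restriction-property hypothesis is not actually used in this argument---it is needed elsewhere to ensure $\Gamma_G^f\subseteq\Gamma^*$---so the proof depends only on $H$ being a connected vertex-induced subgraph of $G$ with at least one edge.
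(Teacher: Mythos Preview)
Your argument is correct and rests on the same verification the paper carries out (edge count $|V_G|-1$, spanning, connectedness via routing through $V_H$), so the core is identical. The organizational difference is that you introduce the contraction $G/H$ up front and identify $\psi_2(\Gamma^*)=\Gamma_{G/H}$ as part of the proof, whereas the paper proves the theorem working abstractly with $\psi_2(\Gamma^*)$---for $\gamma'\in\psi_2(\Gamma^*)$ it pulls a witness $\tilde\gamma_H\cup\gamma'\in\Gamma^*$ from the definition and swaps $\tilde\gamma_H$ for an arbitrary $\gamma_H\in\Gamma_H$---and only afterward establishes the bijection with $\Gamma_{G/H}$ in a separate lemma. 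Your packaging is arguably cleaner since it names the second factor concretely, at the cost of front-loading the contraction machinery; the paper's version is more self-contained for this statement alone. Your observation that the restriction property is not actually invoked is correct and matches the paper: that hypothesis is used only in the subsequent remark to place $\Gamma_G^f$ inside $\Gamma^*$. One small wording wrinkle: ``since $H$ is vertex-induced there are no self-loops'' should be read as ``no edge of $E_2$ becomes a self-loop,'' since edges of $E_H$ certainly do (and are discarded); the mathematics is fine, but the sentence could be clearer.
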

\begin{remark}\label{rem:divide-fair-trees}
In particular,~\eqref{eq:comp-divide} implies that  $\psi_1(\Ga^*)=\Ga_H$. When $H$ is a homogeneous core, then $H$ is vertex induced and has the properties assumed in the theorem (Properties~1--3 of Theorem~\ref{thm:component}), which means that the set of fair trees $\Ga_G^f$ satisfies $\psi_1(\Ga_G^f)\subset\Ga_H$.
In other words, $\Ga_G^f\subset\Ga^*\subset\Gamma_G$.
\end{remark}

\begin{proof}[Proof of Theorem \ref{thm:comp-divide}]
Let $\gamma\in\Gamma^*$.  By definition,
  $\gamma_H:=\gamma\cap H\in\Gamma_H$ and
  \[
\gamma' := \gamma\setminus\gamma_H = \gamma\cap E_2\in\psi_2(\Gamma^*).
\]
  Thus $\Gamma^* \subset \Gamma_H\oplus\psi_2(\Gamma^*)$.

  To see the other inclusion, suppose $\gamma_H\in\Gamma_H$ and
  $\gamma'\in\psi_2(\Gamma^*)$ and let $\gamma:=\gamma_H\cup\gamma'$.  By
  definition of $\psi_2(\Ga^*)$, there exists $\ga^*\in\Ga^*$ such that $\ga^*\cap E_2=\ga'$. Thus letting $\tilde{\ga}_H:=\psi_1(\ga^*)$, we have
$\tilde{\gamma}_H\in\Gamma_H$, and
  $\tilde{\gamma}_H\cup\gamma'=\ga^*\in\Gamma^*$.  Using properties of spanning
  trees, we see that
\[|\gamma_H|=|\tilde{\gamma}_H| = |V_H|-1\qquad\text{and}\qquad
  |\tilde{\gamma}_H\cup\gamma'|=|\ga^*|=|V_G|-1.
\]
So $|\gamma'|=|V_G|-|V_H|$.
  This implies that $|\gamma|=|V_G|-1$.  Also, since $\gamma_H$ spans
  $H$, every $v\in V_H$ is incident to an edge in $\gamma$.  Now consider a vertex $v\in V_G\setminus V_H$.  Since
  $\ga^*=\tilde{\gamma}_H\cup\gamma'$ is a spanning tree of $G$, it must
  contain an edge $e$ that meets $v$.  By assumption,  $H$ is vertex-induced, so this edge $e$ cannot belong to $E_H$ and therefore must belong to $\gamma'$.
  Thus, $\gamma$ covers $V_G$ and has exactly $|V_G|-1$ edges.  To see
  that it is a spanning tree, then, it remains to show that $\gamma$
  is connected.

  First, let $x,y\in V_H$.  Since $\gamma_H$ forms a spanning tree of
  $H$, it must contain a path connecting $x$ to $y$.  Now, suppose
  $x\in V_G\setminus V_H$ and $y\in V_H$.  Since
  $\tilde{\gamma}_H\cup\gamma'$ is a spanning tree of $G$, it must
  contain a path connecting $x$ to $y$.  Traversing this path starting
  from $x$, consider the first time the path crosses from
  $V_G\setminus V_H$ to $V_H$.  Call this edge $e$ and its two endpoints $x'\in V_G\setminus V_H$ and $y'\in V_H$.  Since $H$ is vertex
  induced, $e$ must belong to $\gamma'$.  Thus, $\gamma'$ contains a
  path from $x$ to $y'$.  Since $\gamma_H$ is connected in $H$, it
  contains a path from $y'$ to $y$ and the concatenation of these two
  paths provides a path from $x$ to $y$ in $\gamma$.  Since every
  vertex in $V_G$ is connected to every vertex in $V_H$ by a path in
  $\gamma$, it follows that $\gamma$ is connected and therefore a
  spanning tree.  Moreover, its restriction to $E_H$ is
  $\gamma_H\in\Gamma_H$, so $\gamma\in\Gamma^*$. This concludes the proof of (\ref{eq:comp-divide}).
\end{proof}

\subsection{The deflation process}\label{ssec:deflation}

Theorem~\ref{thm:comp-divide} provides a first look at the deflation
process.  Together with Theorem~\ref{thm:serial}, it shows that finding the modulus of $\Gamma_G$ is equivalent to finding
the modulus of $\Gamma_H$ and $\psi_2(\Gamma^*)$ separately.  The remainder of the deflation process involves recognizing that the latter of these two is actually a spanning tree modulus problem as well.

Once this connection is established, it is possible to repeatedly
``deflate'' homogeneous cores of a graph until we arrive at a trivial,
single-vertex graph.  Through the deflation process, we are able to
better understand the set of optimal pmfs $\mu^*$ on the family of
spanning trees $\Gamma_G$.

To see how the deflation process works, let $G$ be a nonhomogeneous
graph, and let $H$ be a homogeneous core.  The \emph{shrunk
  graph} $G/H$ is defined as follows.  (It may be helpful at this
point to refer back to Figure~\ref{fig:deflate}.)  The vertices
$V_{G/H}$ are obtained by identifying all vertices of $V_H$ in $V_G$
as a single vertex:
\begin{equation*}
  V_{G/H} = (V_G\setminus V_H) \cup \{v_H\},
\end{equation*}
where all nodes of $V_H$ have been shrunk to a single node $v_H$.  The
shrunk graph $G/H$ can then be defined through the graph homomorphism
\begin{equation}
  \phi(x) :=
  \begin{cases}
    x & \text{if }x\in V_G\setminus V_H,\\
    v_H &\text{if }x\in V_H.
  \end{cases}\label{eq:homomorphism-phi}
\end{equation}
Edges in $G$ that lie outside of $H$ are retained in
$G/H$, edges inside of $H$ are dropped, and every edge that connected a vertex in $x\in V_G\setminus V_H$ to a vertex in $V_H$ gives rise to an edge between $x$ and $v_H$.

It is also convenient to view the homomorphism as a bijection on the edges,
\begin{equation}
  \label{eq:phi-star}
  \varphi:(E_G\setminus E_H)\to E_{G/H}.
\end{equation}

\begin{lemma}\label{lem:phi-bijection}
Using the notation of Theorem~\ref{thm:comp-divide}, let $\Ga_{G/H}$ be the family of spanning trees on $G/H$.  Define $\varphi$ as in~\eqref{eq:phi-star}.  Then
\begin{equation*}
\varphi(\psi_2(\Ga^*)) = \Ga_{G/H}.
\end{equation*}
That is, $\varphi$ is a bijection between $\psi_2(\Ga^*)$ and $\Ga_{G/H}$.
\end{lemma}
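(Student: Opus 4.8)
The plan is to establish the two set inclusions $\varphi(\psi_2(\Gamma^*)) \subseteq \Gamma_{G/H}$ and $\Gamma_{G/H} \subseteq \varphi(\psi_2(\Gamma^*))$; since $\varphi$ is a bijection on $E_G\setminus E_H$ and hence injective on its subsets, these two inclusions together yield the claimed bijection. The two workhorses are the homomorphism $\phi$ of~\eqref{eq:homomorphism-phi} together with its edge version $\varphi$ of~\eqref{eq:phi-star}, and the elementary fact that a connected subgraph covering all $n$ vertices and having exactly $n-1$ edges is a spanning tree, applied on $G$ and on $G/H$. Throughout I will use $|V_{G/H}|-1 = |V_G|-|V_H|$.

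For the forward inclusion, take $\gamma'\in\psi_2(\Gamma^*)$ and a witness $\gamma\in\Gamma^*$ with $\gamma\cap E_2 = \gamma'$; writing $\gamma_H := \gamma\cap E_H$, the definition of $\Gamma^*$ in~\eqref{eq:gamma-star} gives $\gamma_H\in\Gamma_H$, so $\gamma = \gamma_H\cup\gamma'$ is a spanning tree of $G$. Apply $\phi$: since $\gamma$ is connected and covers $V_G$ and $\phi$ is onto $V_{G/H}$, the image $\phi(\gamma)$ is connected and covers $V_{G/H}$. Because $H$ is vertex-induced, every edge of $\gamma'$ has at most one endpoint in $V_H$, so it maps under $\phi$ to the edge $\varphi(e)$ of $G/H$ rather than to a self-loop, whereas every edge of $\gamma_H$ becomes a self-loop at $v_H$ and is pruned; hence $\phi(\gamma)$ has edge set exactly $\varphi(\gamma')$. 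Counting, $|\varphi(\gamma')| = |\gamma'| = (|V_G|-1)-(|V_H|-1) = |V_{G/H}|-1$, so $\varphi(\gamma')$ is a connected spanning subgraph of $G/H$ with the right number of edges, that is, $\varphi(\gamma')\in\Gamma_{G/H}$.

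For the reverse inclusion, take $T\in\Gamma_{G/H}$, put $\gamma' := \varphi^{-1}(T)\subseteq E_G\setminus E_H$, fix any $\gamma_H\in\Gamma_H$ (nonempty since $H$ is connected; see Remark~\ref{rem:divide-fair-trees}), and set $\gamma := \gamma_H\cup\gamma'$; it suffices to show $\gamma\in\Gamma^*$, since then $\psi_2(\gamma)=\gamma'$ witnesses $\gamma'\in\psi_2(\Gamma^*)$ and $T=\varphi(\gamma')$. The count $|\gamma| = (|V_H|-1)+(|V_{G/H}|-1) = |V_G|-1$ is immediate; $\gamma$ covers $V_G$ because $\gamma_H$ covers $V_H$ and any $v\in V_G\setminus V_H$ is met in $G$ by the edge $\varphi^{-1}(e)$ for an edge $e\in T$ incident to $v$ in $G/H$. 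The delicate point---the step I expect to cost the most care---is connectedness of $\gamma$: for $x,y\in V_G$, the unique path in the tree $T$ from $\phi(x)$ to $\phi(y)$ passes through $v_H$ at most once, so lifting its edges through $\varphi^{-1}$ produces at most two arcs lying in $\gamma'$ whose free ends are vertices of $V_H$, and these arcs (together with $x$ and $y$, if those lie in $V_H$) can be joined inside the spanning tree $\gamma_H$ of $H$; the concatenation is a walk from $x$ to $y$ in $\gamma=\gamma_H\cup\gamma'$. A connected subgraph covering $V_G$ with $|V_G|-1$ edges is a spanning tree, so $\gamma\in\Gamma_G$, and $\gamma\cap E_H=\gamma_H\in\Gamma_H$ gives $\gamma\in\Gamma^*$, which finishes the argument.
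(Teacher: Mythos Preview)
Your proof is correct and follows essentially the same two-inclusion strategy as the paper: in both directions you check the edge count, spanning, and connectedness of the candidate tree, invoking that $H$ is vertex-induced to ensure edges of $E_G\setminus E_H$ survive under $\phi$. The only cosmetic differences are that in the forward direction you transport connectedness via the homomorphism $\phi$ rather than checking vertices individually, and in the reverse direction you lift an arbitrary $\phi(x)$--$\phi(y)$ path and patch it through $\gamma_H$, whereas the paper more simply connects each $x\in V_G\setminus V_H$ to some vertex of $V_H$ and then appeals to $\gamma_H$; both arguments are sound.
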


\begin{proof}
Let $S\in\psi_2(\Ga^*)$.  By definition, there exists $\ga\in\Ga_G$ such that $S=\ga\cap E_2$, so $|\ga|=|V_G|-1$.  Since $\ga_H:=\ga\cap H\in\Ga_H$, $|\ga_H|=|V_H|-1$.  Thus
\begin{equation}
|S| = |\ga| - |\ga_H| = |V_G|-|V_H| = |V_{G/H}|-1.
\end{equation}
Since $\varphi$ is a bijection, $\varphi(S)$ has the correct number of edges to be a spanning tree.

To see that $\varphi(S)$ spans $V_{G/H}$, first let $x\in V_{G/H}\setminus\{v_H\}$. Then $x=\phi^{-1}(x)\in V_G\setminus V_H$.  Since $\ga$ spans $G$, $x$ must be incident to an edge $e\in\ga$.  Since $x\notin V_H$ it follows that $e\in S$.  So the edge $\varphi(e)\in\varphi(S)$ is incident to $x$ in $G/H$.  The one remaining vertex to check is $v_H$.  Since $\ga$ is connected and spans $G$, there must be at least one edge $e\in\ga$ connecting a vertex $x\in V_G\setminus V_H$ to a vertex $y\in V_H$.  Again,$e$ cannot belong to $\ga_H$ and therefore must belong to $S$.  Thus, the edge $\varphi(e)\in\varphi(S)$ is incident to the vertex $\phi(y)=v_H$ in $G/H$.

Finally, to see that $\varphi(S)$ is connected, consider any $x\in V_{G/H}\setminus\{v_H\}$, so that $\phi^{-1}(x)=x$ and let $y\in V_H$ be arbitrary.  Since $\ga$ is connected, it contains a path connecting $x$ to $y$ and, without loss of generality, we may assume that $y$ is the only vertex in this path that is contained in $V_H$. Thus, the path lies entirely in $S$.  Under the bijection $\varphi$, this path connects $\phi(x)=x$ to $\phi(y)=v_H$.  Since $\phi(S)$ contains a path between $v_H$ and every other node of $V_{G/H}$, it is connected and therefore $\phi(S)\in\Ga_{G/H}$, which shows that $\varphi(\psi_2(\Ga^*))\subseteq\Ga_{G/H}$.

To see the other inclusion, let $\ga'\in\Ga_{G/H}$ and let $S=\varphi^{-1}(\ga')$.  Then $S\in E_G\setminus E_H$ and $|S|=|\ga'|=|V_{G/H}| = |V_G|-|V_H|$.  Let $\ga_H$ be an arbitrary spanning tree of $\ga_H$ and define $\ga=S\cup\ga_H$.  Since $\ga'=\psi_2(\ga)$, we wish to show that $\ga\in\Ga^*$.  And, since $\psi_1(\ga)=\ga_H\in\Ga_H$, it is enough to show that $\ga\in\Ga_G$.

Since $|\ga|=|S|+|\ga_H|=|V_G|-1$, $\ga$ has the correct number of edges to be a spanning tree.  Since $\ga'$ spans $V_{G/H}$, $S$ spans $V_G\setminus V_H$.  Also, as a spanning tree, $\ga_H$ spans $V_H$.  Thus the union $\ga=S\cup\ga_H$ spans $V_G$.  To see that $\ga$ is connected, first note that, since $\ga_H$ is connected in $H$, $\ga$ connects all vertices in $V_H$.  Next, let $x\in V_G\setminus V_H$.  As a spanning tree, $\ga'$ contains a path connecting $\phi(x)$ to $v_H$ in $G/H$.  Under the inverse bijection $\varphi^{-1}$, this path connects $x$ to some $y\in V_H$ in $G$.  Since $\ga$ connects every vertex in $V_G\setminus V_H$ to a vertex in $V_H$ and since $\ga$ connects all vertices of $V_H$ to each other, $\ga$ is connected in $G$.
\end{proof}

The bijection established in Lemma~\ref{lem:phi-bijection} shows that $\psi_2(\Ga^*)$ can be identified with $\Ga_{G/H}$. Since the role of $\Ga$ in a family of objects is essentially as an index set for the usage matrix, every statement about the family $\psi_2(\Ga^*)$ has an equivalent for $\Ga_{G/H}$ and vice versa.

For example, a pmf $\nu\in\cP(\psi_2(\Ga^*))$ can be pushed forward to a pmf $\varphi_*\nu\in\cP(\Ga_{G/H})$ and, similarly, a pmf $\nu'\in\cP(\Ga_{G/H})$ can be pulled back to a pmf $\phi^{-1}_*\nu'\in\cP(\psi_2(\Ga^*))$.  Using this bijection, each pmf $\mu\in\cP(\Ga^*)$ can be associated with two marginal pmfs $\mu_H\in\cP(\Ga_H)$ and $\mu_{G/H}\in\cP(\Ga_{G/H})$.  Namely,
\begin{equation}\label{eq:marginals-spt}
\begin{split}
\mu_H(\ga_H) &= \mu\left(\left\{\ga\in\Ga^*:\ga\cap H=\ga_H\right\}\right),\\
\mu_{G/H}(\ga_{G/H})
&= \mu\left(\left\{\ga\in\Ga^*:\ga\cap (E_G\setminus E_H)=\varphi^{-1}(\ga_{G/H})\right\}\right)
\end{split}
\end{equation}

With this identification, the serial rule in Theorem~\ref{thm:serial} has an interesting interpretation.

\begin{theorem}\label{thm:serial-shrunk}
Let $\Ga_G$ be the family of spanning trees on a nonhomogeneous graph $G=(V,E)$ and let $H$ be a homogeneous core.  Let $G/H$ be the corresponding shrunk graph, and let $\Ga_H$ and $\Ga_{G/H}$ be the families of spanning trees on $H$ and $G/H$ respectively.  Finally, let $\varphi$ be the bijection between $\psi_2(\Ga^*)$ and $\Ga_{G/H}$ as in Lemma~\ref{lem:phi-bijection}. Then:
\begin{enumerate}
\item the following serial rule holds
\begin{equation}\label{eq:meo-serial-spt}
\MEO(\Ga_G)=\MEO(\Ga_H)+\MEO(\Ga_{G/H}),
\end{equation}
 \item  a pmf $\mu\in\P(\Gamma_G)$ is optimal for $\MEO(\Gamma_G)$
  if and only if $\mu\in\P(\Ga^*)$ and its marginal pmfs $\mu_H$ and $\mu_{G/H}$, defined in (\ref{eq:marginals-spt}),  are optimal for $\MEO(\Ga_H)$ and $\MEO(\Ga_{G/H})$ respectively;
\item conversely, given $\mu_H\in\P(\Ga_H)$ and $\mu_{G/H}\in\P(\Ga_{G/H})$ that are optimal for their respective MEO problems,  $\mu_H\oplus(\varphi^{-1}_*\mu_{G/H})$ is an optimal pmf in $\cP(\Ga_G)$ for $\MEO(\Ga_G)$;
\item finally, for any pmf $\mu\in\P(\Ga^*)$ with marginals $\mu_H$ and $\mu_{G/H}$,
\begin{equation*}
\bP_\mu(e\in\underline{\ga})=
\begin{cases}
\bP_{\mu_H}(e\in\underline{\ga}_H)&\text{if }e\in H,\\
\bP_{\mu_{G/H}}(e\in\underline{\ga}_{G/H})&\text{if }e\in E_G\setminus E_H.\\
\end{cases}
\end{equation*}
\end{enumerate}
\end{theorem}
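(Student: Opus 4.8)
The plan is to reduce everything to the abstract serial rule of Theorem~\ref{thm:serial}, applied not to $\Ga_G$ directly but to the family $\Ga^*$ of~\eqref{eq:gamma-star}, and then to transport the $\psi_2(\Ga^*)$-half of each conclusion across the bijection $\varphi$ of Lemma~\ref{lem:phi-bijection}. The decomposition $\Ga_G \rightsquigarrow \Ga_H \oplus \psi_2(\Ga^*)$ is exactly what Theorem~\ref{thm:comp-divide} provides, and Lemma~\ref{lem:phi-bijection} identifies the second factor with $\Ga_{G/H}$; the content of this theorem is just the careful assembly of these facts.

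First I would record the key reduction $\MEO(\Ga_G)=\MEO(\Ga^*)$. Since $H$ is a homogeneous core, Remark~\ref{rem:divide-fair-trees} gives $\Ga_G^f\subset\Ga^*\subset\Ga_G$. By the definition of forbidden trees, every optimal pmf for $\MEO(\Ga_G)$ is supported on $\Ga_G^f$ and hence lies in $\cP(\Ga^*)$; since also $\cP(\Ga^*)\subset\cP(\Ga_G)$, the two minimization problems have the same value, and moreover a pmf $\mu$ is optimal for $\MEO(\Ga_G)$ if and only if $\mu\in\cP(\Ga^*)$ and $\mu$ is optimal for $\MEO(\Ga^*)$. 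This is the only place the homogeneous-core hypothesis enters, and I expect getting this equivalence stated cleanly (rather than any genuine difficulty) to be the main obstacle.

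Next I would invoke Theorem~\ref{thm:comp-divide}: the partition $\{E_H,\,E_G\setminus E_H\}$ divides $\Ga^*$, with induced families $\psi_1(\Ga^*)=\Ga_H$ and $\psi_2(\Ga^*)$. Both blocks of the partition are nonempty --- $E_H\neq\emptyset$ by Property~1 of Theorem~\ref{thm:component}, and $E_G\setminus E_H\neq\emptyset$ because $G$ is nonhomogeneous. Applying Theorem~\ref{thm:serial} to $\Ga^*$ with this partition then yields, verbatim, the four conclusions with $\Ga^*$, $\Ga_H$, $\psi_2(\Ga^*)$ in the roles of $\Gamma$, $\Gamma_1$, $\Gamma_2$: the serial identity $\MEO(\Ga^*)=\MEO(\Ga_H)+\MEO(\psi_2(\Ga^*))$, the equivalence between optimality of $\mu\in\cP(\Ga^*)$ and optimality of both marginals, the fact that a trivial coupling of optimal marginals is optimal, and the inheritance of edge-usage probabilities from the marginals.

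Finally I would translate the $\psi_2(\Ga^*)$ half into $\Ga_{G/H}$. The point is that $\varphi$ of~\eqref{eq:phi-star} is merely a relabeling of $E_G\setminus E_H$ onto $E_{G/H}$, so it preserves the indicator usage matrix: $\cN(\varphi(S),\varphi(e))=\cN(S,e)$. Hence the weighted overlap functional, and therefore $\MEO$, is preserved, so $\MEO(\psi_2(\Ga^*))=\MEO(\Ga_{G/H})$; a pmf $\nu$ on $\psi_2(\Ga^*)$ is optimal if and only if $\varphi_*\nu$ is optimal on $\Ga_{G/H}$; and edge-usage probabilities correspond under $\varphi$. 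Combining this with the previous two steps, and observing that the marginals $\mu_H$ and $\mu_{G/H}$ of~\eqref{eq:marginals-spt} are precisely the first marginal $\mu_1$ of~\eqref{eq:restrict-mu} and the pushforward $\varphi_*\mu_2$ of the second marginal, the four assertions of the theorem follow. The remaining work is entirely routine bookkeeping: checking the identifications $\mu_H=\mu_1$ and $\mu_{G/H}=\varphi_*\mu_2$, and confirming that the equivalence $\MEO(\Ga_G)=\MEO(\Ga^*)$ from the first step carries the optimality statements through without loss.
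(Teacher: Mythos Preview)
Your proposal is correct and follows exactly the paper's approach: the paper's own proof is a single sentence stating that the theorem ``is simply a restatement of Theorem~\ref{thm:serial}, making use of the bijection $\varphi$ from Lemma~\ref{lem:phi-bijection}.'' Your write-up carefully unpacks precisely what that sentence means---the reduction $\MEO(\Ga_G)=\MEO(\Ga^*)$ via Remark~\ref{rem:divide-fair-trees}, the application of Theorem~\ref{thm:serial} to $\Ga^*$ via Theorem~\ref{thm:comp-divide}, and the transport along $\varphi$---so there is nothing to add.
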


\begin{proof}
This is simply a restatement of Theorem~\ref{thm:serial}, making use of the bijection $\varphi$ from Lemma~\ref{lem:phi-bijection}.
\end{proof}

As described earlier, Theorem~\ref{thm:serial-shrunk} can be iterated in a process we call deflation.  Starting from a graph $G$, we know there exists a homogeneous core $H\subset G$.  This divides the MEO problem on $G$ into two subproblems: an MEO problem on $H$ and one of $G/H$.  We know that any optimal pmf $\mu\in\cP(\Ga_G)$ for $\MEO(\Ga_G)$ must necessarily lie in the more restrictive set $\cP(\Ga^*)$ as defined in~\eqref{eq:gamma-star}
Moreover, the fact that $H$ is a homogeneous core implies that
\begin{equation*}
	\MEO(\Ga_H) = \frac{(|V_H|-1)^2}{|E_H|},
\end{equation*}  
so
\begin{equation*}
	\MEO(\Ga_G) = \MEO(\Ga_{G/H}) + \frac{(|V_H|-1)^2}{|E_H|}.
\end{equation*}
Iterating this procedure on $\Ga_{G/H}$ gives a natural decomposition of any graph into a sequence of shrunk graphs, eventually terminating when a final homogeneous graph is shrunk to a single vertex.  By tracing back through the deflation process, we can completely characterize the optimal pmfs for $\MEO(\Ga_G)$ in terms of their restrictions to the various cores shrunk during the deflation process.

\subsection{Deflation and denseness}\label{sec:denseness}

Another way of looking at the deflation process is through the concept of denseness.  For a given graph $G=(V,E)$, define the {\it denseness} of $G$ as
\begin{equation*}
\theta(G) := \frac{|E|}{|V|-1}.
\end{equation*}
In other words, $\theta(G)$ is the weight of the trivial partition that separates each node into its own part.  The function $\theta$ gives a sense of how dense or sparse $G$ is.  For connected $G$, $\theta$ is minimized at $\theta(G)=1$ if and only if $G$ is a tree.  A straightforward calculation shows that, on large graphs, $\theta(G)$ is closely related to average degree, since
\begin{equation}\label{eq:theta-vs-avg-deg}
2\left(1-\frac{1}{|V|}\right)\theta(G) = \frac{1}{|V|}\sum_{x\in V}\deg(x).
\end{equation}
The next theorem shows that $\theta$ is a decreasing function on the sequence of shrunk graphs produced by deflation.

\begin{theorem}\label{thm:denseness-comparison}
Let $G=(V,E)$ be a nonhomogeneous graph and let $H$ be a homogeneous core as constructed in Theorem~\ref{thm:component}.  Then
\begin{equation*}
\theta(G/H) < \theta(G) < \theta(H).
\end{equation*}
\end{theorem}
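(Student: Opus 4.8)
The plan is to view each of the three numbers $\theta(G)^{-1}$, $\theta(H)^{-1}$, $\theta(G/H)^{-1}$ as an \emph{average} of the optimal density $\eta^*$ for $\Mod_2(\widehat{\Ga_G})$ over a suitable set of edges, and then read off the chain of inequalities from a convexity (strict betweenness) argument. First I would record the elementary counting identities for the shrunk graph: identifying $V_H$ to one vertex and deleting $E_H$ while keeping all other edges with multiplicity gives $|V_{G/H}| = |V_G|-|V_H|+1$ and $|E_{G/H}| = |E_G|-|E_H|$, so that $\theta(G/H)^{-1} = \frac{|V_G|-|V_H|}{|E_G|-|E_H|}$, while $\theta(G)^{-1} = \frac{|V_G|-1}{|E_G|}$ and $\theta(H)^{-1} = \frac{|V_H|-1}{|E_H|}$.

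Next I would exploit the structure of the constructed core. Since $H$ is a connected component of the set $H_{\min}$ on which $\eta^*$ attains its minimum $\kappa = \min_{e\in E}\eta^*(e)$, we have $\eta^* \equiv \kappa$ on $E_H$; and since (Property~4 of Theorem~\ref{thm:component}) the marginal $\mu_H$ of an optimal pmf for $\MEO(\Ga_G)$ is a pmf on $\Ga_H$ whose edge usage equals $\eta^*$ on $E_H$ and is therefore the constant $\kappa$, Corollary~\ref{cor:homog-pmf} forces $\kappa = \frac{|V_H|-1}{|E_H|} = \theta(H)^{-1}$. Then Lemma~\ref{lem:avg-eta} gives $\sum_{e\in E}\eta^*(e) = |V_G|-1$, while $\sum_{e\in E_H}\eta^*(e) = |E_H|\kappa = |V_H|-1$; subtracting, $\sum_{e\in E_G\setminus E_H}\eta^*(e) = |V_G|-|V_H|$, and dividing by $|E_G\setminus E_H| = |E_{G/H}|$ shows that $\theta(G/H)^{-1}$ is precisely the average of $\eta^*$ over $E_G\setminus E_H$. (This route avoids the serial rule; alternatively one could combine Theorem~\ref{thm:serial-shrunk}(4) with Lemma~\ref{lem:avg-eta} applied to $G/H$.)

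Finally I would compare the three averages. Every $e\in E_G\setminus E_H$ satisfies $\eta^*(e)\ge\kappa$ because $\kappa$ is the global minimum of $\eta^*$; and since $G$ is nonhomogeneous, $\eta^*$ is non-constant, so some edge $e_0$ has $\eta^*(e_0)>\kappa$, and $\eta^*(e_0)>\kappa$ forces $e_0\notin H_{\min}\supseteq E_H$, hence $e_0\in E_G\setminus E_H$ (so in particular $E_G\setminus E_H\neq\emptyset$, as it must be since $H=G$ would make $G$ homogeneous). Thus the average of $\eta^*$ over $E_G\setminus E_H$ is \emph{strictly} larger than $\kappa$, i.e.\ $\theta(H)^{-1} = \kappa < \theta(G/H)^{-1}$. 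Writing $\theta(G)^{-1} = \frac{|E_H|}{|E_G|}\,\theta(H)^{-1} + \frac{|E_{G/H}|}{|E_G|}\,\theta(G/H)^{-1}$ displays $\theta(G)^{-1}$ as a convex combination of the two distinct positive numbers $\theta(H)^{-1}$ and $\theta(G/H)^{-1}$ with strictly positive weights (both $|E_H|\ge1$, by Property~1 of Theorem~\ref{thm:component}, and $|E_{G/H}|\ge1$). Hence $\theta(H)^{-1} < \theta(G)^{-1} < \theta(G/H)^{-1}$, and taking reciprocals of these positive quantities yields $\theta(G/H) < \theta(G) < \theta(H)$.

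I expect the one genuinely delicate point to be the identity $\kappa = \theta(H)^{-1}$: this is what links the \emph{value} of the ambient optimal density on the core to the core's intrinsic denseness, and it rests on $H$ being homogeneous (Property~4 of Theorem~\ref{thm:component}) together with Corollary~\ref{cor:homog-pmf}, not on any purely combinatorial count. Everything afterward is bookkeeping plus the elementary fact that a convex combination, with positive weights, of two distinct positive numbers lies strictly between them.
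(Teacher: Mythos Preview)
Your proof is correct and follows essentially the same approach as the paper: identify $\theta(H)^{-1}=\kappa$, show that the average of $\eta^*$ over $E_G\setminus E_H$ equals $\theta(G/H)^{-1}$, use nonhomogeneity for strictness, and conclude via a convex-combination identity. The only cosmetic differences are that the paper obtains the average over $E_G\setminus E_H$ by invoking Lemma~\ref{lem:fp-avg-eta} for the feasible partition that isolates $V_H$ (whereas you subtract the $E_H$ sum from the global sum directly), and the paper writes $\theta(G)$ itself as a convex combination of $\theta(H)$ and $\theta(G/H)$ with weights proportional to $|V_H|-1$ and $|V_{G/H}|-1$, rather than your equivalent convex combination for the reciprocals with edge-count weights.
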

\begin{proof}
By Theorem~\ref{thm:component} and Corollary~\ref{cor:homog-density},
\begin{equation*}
\theta(H) = \frac{|E_H|}{|V_H|-1} = \kappa^{-1} = \left(\min_{e\in E}\eta^*(e)\right)^{-1},
\end{equation*}
where $\eta^*$ is optimal for the FEU problem.

Now, let $P$ be the feasible partition of $G$ defined by grouping all vertices in $H$ into a single partition part and separating all other vertices into individual parts.  Then $k_P = |V_G|-|V_H|+1 = |V_{G/H}|$ and $|E_P| = |E_{G/H}|$ so $w(P) = \theta(G/H)$.  Furthermore, since $H$ has the restriction property, every fair tree $\gamma\in\Ga_G^f$ satisfies
\begin{equation*}
|\ga\cap E_P| = |\ga|-|\ga\cap E_H| = |V_G|-|V_H| = k_P-1.
\end{equation*}
By Lemma~\ref{lem:fp-avg-eta}, then, the average of $\eta^*$ over $E_G\setminus E_H$ is $\theta(G/H)^{-1}$.  Since, $G$ is not homogeneous, $\eta^*$ is not constant.  Therefore, there must be an edge in $E_G\setminus E_H$ on which $\eta^*(e)>\kappa = \theta(H)^{-1}$ and, thus, $\theta(G/H)<\theta(H)$.

Finally,
\begin{equation*}
\begin{split}
\theta(G) &= \frac{|E_G|}{|V_G|-1} 
= \frac{|E_{G/H}|+|E_H|}{(|V_{G/H}|-1)+(|V_H|-1)}\\
&= \frac{|V_{G/H}|-1}{(|V_{G/H}|-1)+(|V_H|-1)}\theta(G/H)
+ \frac{|V_{H}|-1}{(|V_{G/H}|-1)+(|V_H|-1)}\theta(H).
\end{split}
\end{equation*}
Since $\theta(G)$ is a nontrivial convex combination of $\theta(G/H)$ and $\theta(H)$, the theorem follows.
\end{proof}

In fact, the denseness measure $\theta$ gives a way of identifying homogeneous cores.

\begin{theorem}\label{thm:densest-subgraph-is-core}
Let $G=(V,E)$ be a graph with at least one edge.  Let $\mathcal{H}$ be the set of all vertex-induced, connected subgraphs of $G$ that contain at least one edge, and suppose $H\in\mathcal{H}$ satisfies
\begin{equation*}
\theta(H) = \max_{H'\in\mathcal{H}}\theta(H').
\end{equation*}
Then $H$ is a homogeneous core.
\end{theorem}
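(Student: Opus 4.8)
The plan is to show that the densest subgraph $H$ satisfies each of the four defining properties of a homogeneous core (Definition~\ref{def:homog-core}, i.e. Properties~1--4 of Theorem~\ref{thm:component}). Properties~1 and~3 (at least one edge, vertex-induced) hold simply because $H\in\mathcal{H}$, so the content is in Properties~2 and~4, and both rest on one elementary observation. Fix an optimal pmf $\mu^*$ for $\MEO(\Ga_G)$ and let $\eta^*=\N^T\mu^*$. For any vertex subset $W$, the restriction of a spanning tree of $G$ to the induced subgraph $G(W)$ is a forest, so $|\ga\cap E_{G(W)}|\le |W|-1$ for every $\ga\in\Ga_G$; taking $\mu^*$-expectations gives $\sum_{e\in E_{G(W)}}\eta^*(e)\le |W|-1$. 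Writing $\kappa:=\min_{e\in E}\eta^*(e)$ and using $\eta^*\ge\kappa$ yields $\kappa|E_{G(W)}|\le |W|-1$, i.e. $\theta(G(W))\le \kappa^{-1}$ whenever $G(W)\in\mathcal{H}$. Since Theorem~\ref{thm:component} produces a homogeneous core $H_0$ of $G$ with $\theta(H_0)=\kappa^{-1}$ (this value is computed in the proof of Theorem~\ref{thm:denseness-comparison}, via Corollary~\ref{cor:homog-pmf}), we conclude $\max_{H'\in\mathcal{H}}\theta(H')=\kappa^{-1}$, and hence the maximizer $H$ satisfies $\theta(H)=\kappa^{-1}$, equivalently $\kappa|E_H|=|V_H|-1$.

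For Property~2 (the restriction property $\psi_H(\Ga_G^f)\subset\Ga_H$), I would feed $\theta(H)=\kappa^{-1}$ back into the two bounds above with $W=V_H$: the upper bound gives $\sum_{e\in E_H}\eta^*(e)\le |V_H|-1$, while $\sum_{e\in E_H}\eta^*(e)\ge\kappa|E_H|=|V_H|-1$, so in fact $\sum_{e\in E_H}\eta^*(e)=|V_H|-1$. Consequently, for any optimal pmf $\mu^*$, $\bE_{\mu^*}|\underline{\ga}\cap E_H|=|V_H|-1$, while pointwise $|\ga\cap E_H|\le |V_H|-1$; hence $|\ga\cap E_H|=|V_H|-1$ for every $\ga\in\supp\mu^*$, so $\ga\cap E_H$ is a forest with $|V_H|-1$ edges on the $|V_H|$ vertices of $V_H$, i.e. a spanning tree of $H$. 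Since every fair tree lies in the support of some optimal $\mu^*$, this gives Property~2.

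For Property~4 (homogeneity of $H$), I would argue by contradiction using the feasible-partition criterion of Theorem~\ref{thm:homog-feasible-part-bound}. If $H$ were not homogeneous, there would be a feasible partition $P=\{W_1,\dots,W_{k_P}\}$ of $H$ with $w(P)=|E_P|/(k_P-1)<\theta(H)$, where $E_P\subset E_H$ denotes the crossing edges. Each part with $|W_i|\ge 2$ induces a subgraph $G(W_i)=H(W_i)$ which is connected (since $P$ is feasible for $H$), vertex-induced, and has an edge, hence lies in $\mathcal{H}$; by the maximality of $H$, $|E_{G(W_i)}|\le\theta(H)(|W_i|-1)$, an inequality that is trivially true when $|W_i|=1$. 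Summing over $i$, using the disjoint decomposition $E_H=E_P\,\sqcup\,\bigsqcup_i E_{G(W_i)}$ and $\sum_i(|W_i|-1)=|V_H|-k_P$, gives $|E_H|\le\theta(H)(|V_H|-k_P)+|E_P|=|E_H|-\theta(H)(k_P-1)+|E_P|$, whence $w(P)\ge\theta(H)$, contradicting the choice of $P$. Thus $H$ is homogeneous, and being also connected, vertex-induced and nonempty, it is a homogeneous core by Definition~\ref{def:homog-core}.

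I expect Property~4 to be the delicate step: it is the ``convexity of denseness under a vertex partition'' phenomenon already visible in Theorem~\ref{thm:denseness-comparison}, but run in the opposite direction (maximality of $\theta$ forcing homogeneity, rather than homogeneity yielding a denseness comparison), so care is needed to handle singleton parts and, above all, to confirm that each induced subgraph $G(W_i)$ genuinely belongs to $\mathcal{H}$ so that the maximality hypothesis applies. The forest bound underlying everything is routine, but squeezing it into the exact equality $\sum_{e\in E_H}\eta^*(e)=|V_H|-1$ needed for Property~2 relies on the precise identity $\theta(H)=\kappa^{-1}$, so the bookkeeping linking Theorem~\ref{thm:component} to the maximality of $H$ should be laid out carefully.
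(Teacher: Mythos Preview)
Your argument is correct. For Properties~1--3 you proceed essentially as the paper does: the forest bound $|\gamma\cap E_H|\le|V_H|-1$, the identification $\theta(H_0)=\kappa^{-1}$ for the core $H_0$ of Theorem~\ref{thm:component}, and the resulting squeeze $\sum_{e\in E_H}\eta^*(e)=|V_H|-1$ are exactly the ingredients the paper uses, only the paper phrases the restriction property as a contradiction (a strict inequality in the average forcing $\theta(H')>\theta(H)$) rather than as a direct squeeze.

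Your treatment of Property~4, however, is genuinely different. The paper argues by contradiction \emph{inside} $H$: if $H$ were not homogeneous, apply Theorem~\ref{thm:component} to $H$ to extract a homogeneous core $H'\subsetneq H$, and then Theorem~\ref{thm:denseness-comparison} gives $\theta(H)<\theta(H')$, contradicting maximality. You instead invoke the feasible-partition characterization (Theorem~\ref{thm:homog-feasible-part-bound}) and run a purely combinatorial convexity argument: a violating partition $P$ of $H$ would force some part $H(W_i)=G(W_i)\in\mathcal{H}$ to be strictly denser than $H$. Your route is more elementary---it uses no modulus machinery at all for this step, only the maximality hypothesis and the partition identity $|E_H|=|E_P|+\sum_i|E_{H(W_i)}|$---while the paper's route is shorter but leans on the already-established Theorems~\ref{thm:component} and~\ref{thm:denseness-comparison}. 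Both are clean; yours has the virtue of making explicit that ``densest implies homogeneous'' is a statement independent of the MEO/modulus theory.
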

\begin{proof}
We need to check the four properties of Definition~\ref{def:homog-core}.  $H$ contains at least one edge and is vertex induced by definition.  To see that $H$ is homogeneous, suppose not.  Then Theorem~\ref{thm:component} implies that $H$ contains a homogeneous core $H'\subsetneq H$.  By definition $H'\in\mathcal{H}$ and Theorem~\ref{thm:denseness-comparison} implies that $\theta(H)<\theta(H')$, contradicting the maximality assumption on $\theta(H)$.  All that remains, then, is to check the restriction property.

Assume $H$ does not have the restriction property.  Then there exists an optimal pmf $\mu^*\in\mathcal{P}(\Gamma_G)$ and a tree $\ga^*$ in the support of $\mu^*$ such that $\ga^*\cap H$ is a forest.  Estimating as in~\eqref{eq:fp-eta-avg}, but with the inequality in the opposite direction, we find that
\begin{equation*}
\frac{1}{|E_H|}\sum_{e\in E_H}\eta^*(e) 
= \frac{1}{|E_H|}\sum_{\gamma\in\Gamma}\mu^*(\gamma)
      |\ga\cap E_H|
< \frac{|V_H|-1}{|E_H|}.
\end{equation*}
The strict inequality holds because $|\ga^*\cap H|<|V_H|-1$.  But this implies that $\kappa$ defined in~\eqref{eq:kappa} satisfies
\begin{equation}\label{eq:kappa-upper-bound}
\kappa < \frac{|V_H|-1}{|E_H|}.
\end{equation}

Let $H'$ be a homogeneous core of $G$ constructed as in Theorem~\ref{thm:component}. By Corollary~\ref{cor:homog-density}, 
\begin{equation}\label{eq:kappa-equality}
\kappa = \frac{|V_{H'}|-1}{|E_{H'}|}.
\end{equation}
But~\eqref{eq:kappa-upper-bound} and~\eqref{eq:kappa-equality} combined imply that
\begin{equation*}
\theta(H') > \theta(H).
\end{equation*}
Since $H'\in\mathcal{H}$, this contradicts the maximality assumption.  Thus, $H$ must have the restriction property and must, therefore, be a homogeneous core of $G$.
\end{proof}

\begin{corollary}
If $H$ is the homogeneous core constructed in Theorem~\ref{thm:component}, then $H$ is a solution to the densest subgraph problem.
\end{corollary}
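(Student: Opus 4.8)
The plan is to show that the core $H$ produced by Theorem~\ref{thm:component} is in fact a maximizer of the denseness $\theta$ over the family $\mathcal{H}$ of nontrivial, connected, vertex-induced subgraphs of $G$; this is exactly the statement that $H$ solves the densest subgraph problem. First I would record the two facts that make the argument go. On the one hand, $H\in\mathcal{H}$: it has at least one edge (Property~1 of Theorem~\ref{thm:component}), it is connected (by construction), and it is vertex-induced (Property~3). On the other hand, since $H$ is homogeneous (Property~4), Corollary~\ref{cor:homog-density} applied to $H$, together with the identification $\kappa=\min_{e\in E}\eta^*(e)$ from~\eqref{eq:kappa}, gives
\[
\theta(H)=\frac{|E_H|}{|V_H|-1}=\kappa^{-1},
\]
exactly as in the first display of the proof of Theorem~\ref{thm:denseness-comparison}. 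So it remains only to prove that $\theta(H')\le\kappa^{-1}$ for every $H'\in\mathcal{H}$.

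For the key estimate, fix $H'\in\mathcal{H}$ and let $\mu^*$ be an optimal pmf for $\MEO(\Ga_G)$ with $\eta^*=\N^T\mu^*$. Since $H'$ is vertex-induced, every edge of $E_{H'}$ joins two vertices of $V_{H'}$; hence for any spanning tree $\ga\in\Ga_G$ the edge set $\ga\cap E_{H'}$ is an acyclic subgraph supported on $V_{H'}$, so $|\ga\cap E_{H'}|\le|V_{H'}|-1$. Averaging over $\mu^*$ and using that $\eta^*(e)=\bP_{\mu^*}(e\in\underline{\ga})$,
\[
\sum_{e\in E_{H'}}\eta^*(e)=\bE_{\mu^*}|\underline{\ga}\cap E_{H'}|\le|V_{H'}|-1.
\]
Since $\kappa\le\eta^*(e)$ for every $e\in E$, this yields $\kappa\,|E_{H'}|\le|V_{H'}|-1$, i.e.\ $\theta(H')=|E_{H'}|/(|V_{H'}|-1)\le\kappa^{-1}=\theta(H)$. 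Combined with the previous paragraph, $\theta(H)=\max_{H'\in\mathcal{H}}\theta(H')$, which is the claim. One could equivalently obtain the estimate by applying Lemma~\ref{lem:fp-eta} (or its averaged form, Lemma~\ref{lem:fp-avg-eta}) to the feasible partition of $G$ that collapses $V_{H'}$ to a single part and isolates every other vertex, but the direct forest bound seems cleanest.

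I do not expect a serious obstacle: the real content is the elementary observation that a spanning tree of $G$ meets any vertex-induced subgraph in a forest, which caps $\sum_{e\in E_{H'}}\eta^*(e)$ by $|V_{H'}|-1$. The two points needing a little care are (i) justifying $\theta(H)=\kappa^{-1}$, which is a verbatim consequence of Corollary~\ref{cor:homog-density} applied to the homogeneous graph $H$, and (ii) the degenerate case in which $G$ itself is homogeneous, where the construction of Theorem~\ref{thm:component} returns $H=G$ with $\kappa\equiv(|V_G|-1)/|E_G|$; the very same inequality then shows $\theta(G)\ge\theta(H')$ for all $H'\in\mathcal{H}$, so the argument is uniform.
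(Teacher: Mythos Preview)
Your argument is correct and is in fact more elementary than the paper's. The paper proceeds indirectly: it takes an arbitrary maximizer $H'$ of $\theta$ over $\mathcal{H}$, invokes Theorem~\ref{thm:densest-subgraph-is-core} to conclude that $H'$ is itself a homogeneous core, and then appeals to Theorem~\ref{thm:serial-shrunk} together with Corollary~\ref{cor:homog-pmf} to identify $\eta^*(e')=\theta(H')^{-1}$ for $e'\in E_{H'}$, whence $\kappa\le\theta(H')^{-1}$ gives $\theta(H)\ge\theta(H')$. Your route bypasses both of those results: the forest bound $|\ga\cap E_{H'}|\le|V_{H'}|-1$ for a vertex-induced $H'$ immediately yields $\kappa\,|E_{H'}|\le\sum_{e\in E_{H'}}\eta^*(e)\le|V_{H'}|-1$, which is exactly $\theta(H')\le\kappa^{-1}=\theta(H)$. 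This is shorter and avoids the nontrivial restriction-property argument inside Theorem~\ref{thm:densest-subgraph-is-core}; what the paper's approach buys is a cleaner logical dependency on already-proved structural theorems, while yours shows the corollary is essentially a one-line consequence of the acyclicity of spanning trees once $\theta(H)=\kappa^{-1}$ is in hand.

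One small clarification worth making explicit in your writeup: ``Corollary~\ref{cor:homog-density} applied to $H$'' by itself only identifies the optimal density for the modulus problem \emph{on $H$} with $(|V_H|-1)/|E_H|$; the equality with $\kappa$ also needs the fact, established at the end of the proof of Theorem~\ref{thm:component} (Property~4), that the optimal $\eta^*$ for $G$ restricted to $E_H$ coincides with the optimal density on $H$. You do cite the first display of the proof of Theorem~\ref{thm:denseness-comparison}, which encapsulates this, so the argument is complete; just be aware that the two ingredients are Theorem~\ref{thm:component} and Corollary~\ref{cor:homog-pmf} together, not Corollary~\ref{cor:homog-density} alone.
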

\begin{proof}
Let $H$ be as assumed in the theorem and let $H'$ be any solution to the densest subgraph problem.  By Theorem~\ref{thm:densest-subgraph-is-core}, $H'$ is also a homogeneous core.  Let $\mu^*\in\cP(\Ga_G)$ be optimal for the MEO problem, let $\eta^*=\cN^T\mu^*$, and let $e\in E_H$ and $e'\in E_{H'}$.  Theorem~\ref{thm:serial-shrunk} together with Corollary~\ref{cor:homog-pmf} shows that
\begin{equation*}
\theta(H)^{-1} = \frac{|V_H|-1}{|E_H|} = \eta^*(e) = \kappa \le \eta^*(e') = \frac{|V_{H'}|-1}{|E_{H'}|} = \theta(H')^{-1},
\end{equation*}
so $\theta(H)\ge\theta(H')$.  Since $H'$ was assumed to solve the densest subgraph problem, $H$ must as well.  (That is, $\theta(H)=\theta(H')$.)
\end{proof}

Essentially, deflation can be thought of as finding $H\in\mathcal{H}$ with maximum $\theta$, shrinking it away, and then repeating the process on the resulting shrunk graph $G/H$.

\subsection{Deflation and feasible partitions}
Deflation is very closely related to the results of Section~\ref{sec:homog-and-fp}.  Essentially, feasible partitions give us a reversed view of the deflation process, which proceeds as follows.  Given a graph, $G$, we identify the minimum feasible partition $P^*$ as in Theorem~\ref{thm:eta-max-feasible-partition}.  Using arguments similar to those in the rest of that section, it is possible to show that each of the $k_{P^*}$ parts of the partition have the restriction property.

Furthermore, if each of these partition parts is shrunk to a single vertex (and self-loops are discarded), the remaining graph is the homogeneous graph produced at the end of the deflation process.  An optimal pmf $\mu$ must produce optimal marginals for the MEO problems on each partition part and for the MEO problem on the shrunk graph (through the associated homomorphism).  By this restriction property, this procedure can be repeated individually on each part of the partition, thus giving an outside-in view of the deflation process.

\section{Some applications of  the deflation process}\label{sec:applications}

To give a sense of how deflation can be used in practice, we now demonstrate its use in two applications.

\subsection{Homogeneity and regular graphs}

We now consider the notion of homogeneity for spanning tree modulus on $d$-regular graphs.
Recall that a graph is called {\it $d$-regular} if the degree of every
vertex is constant equal to some integer $d\ge 2$.  A simple, connected $d$-regular graph $G$ has
exactly $d|V|/2$ edges, implying that $d|V|$ is even.  We shall
refer to the set of simple $d$-regular, connected graphs on $N$ vertices
as $\G_{d,N}$.

To see why regular graphs are of interest, observe that the minimum node degree of a homogeneous graph cannot be too small. Indeed,
in light of Theorem~\ref{thm:serial-spt}, we may restrict our attention to biconnected graphs. Thus, removing an arbitrary vertex $x$ does not disconnect $G$, meaning that $P=\{\{x\},V\setminus\{x\}\}$ is a feasible partition. Hence,  by (\ref{eq:chopra-adm}) and~\eqref{eq:theta-vs-avg-deg},
\[
 \deg(x)=\frac{|E_P|}{k_P-1} \ge \frac{|E|}{|V|-1} = \theta(G) > \frac{1}{2|V|}\sum_{y\in V}\deg(y),
\]
so the minimum node degree in a biconnected homogeneous graph must be larger than half the average degree.
This suggests
that $d$-regular graphs might be good candidates for homogeneous graphs. Indeed, we
shall see that almost every $d$-regular graph is homogeneous.

In order to make this statement precise, we recall that the statement
``Almost every $d$-regular graph has property $X$'' means that, as
$N$ tends to infinity and $d$ remains fixed, the fraction of graphs in $\G_{d,N}$ with
property $X$ tends to $1$.  Probabilistically, as $N$ tends to
infinity, the probability that a $\G_{d,N}$ graph selected with
uniform probability has property $X$ tends to $1$.  As an example of
an ``almost every'' statement, we recall the following theorem due to
Bollob\'as~\cite[Theorem 7.32]{bollobas1985}.
\begin{theorem}[Bollob\'as]\label{thm:bolobas}
  Almost every $d$-regular graph is $d$-connected (in the vertex sense).
\end{theorem}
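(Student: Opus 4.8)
The natural route is through Bollob\'as's \emph{configuration} (or \emph{pairing}) model together with a union bound over small vertex separators; the modulus machinery of this paper plays no role here. First I would replace $\G_{d,N}$ by the pairing model $\widehat G$: equip $N$ vertices with $d$ half-edges (``stubs'') each and pick a uniformly random perfect matching of the $dN$ stubs (assume $dN$ even), contracting matched pairs into edges. Two classical facts, both in \cite{bollobas1985}, make this a faithful substitute: conditioned on $\widehat G$ being simple, $\widehat G$ is distributed uniformly on $\G_{d,N}$, and $\bP(\widehat G\text{ simple})\to\exp(-(d^2-1)/4)>0$ as $N\to\infty$ with $d$ fixed. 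Hence for any graph property $X$ one has $\bP_{\G_{d,N}}(X)=\bP(\widehat G\in X\mid\widehat G\text{ simple})\le\bP(\widehat G\in X,\ \widehat G\text{ simple})\big/\bP(\widehat G\text{ simple})$, so it suffices to show $\bP(\widehat G\text{ is not }d\text{-connected and is simple})\to 0$.

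For the union bound I would use that, for $N\ge d+1$, a $d$-regular graph is $d$-connected exactly when it has no vertex separator of size at most $d-1$ (the minimum degree $d$ already forces vertex connectivity at most $d$ by Whitney). Failure therefore means there is a partition $V=A\sqcup S\sqcup B$ with $A,B\neq\emptyset$, $|S|=s\le d-1$, and no edge of $\widehat G$ between $A$ and $B$. Taking $A$ to be a smallest connected component of $\widehat G-S$, we may assume $2\le|A|=a\le(N-s)/2$; here $a\ge 2$ because a vertex has $d$ neighbours, which cannot all lie in a set of size $\le d-1$. This gives
\[
\bP(\widehat G\text{ not }d\text{-connected, simple})\ \le\ \sum_{s=0}^{d-1}\ \sum_{a=2}^{\lfloor(N-s)/2\rfloor}\ \binom{N}{s}\binom{N-s}{a}\,p(a,s),
\]
where $p(a,s)$ is the probability that, in the random matching, no stub incident to a fixed $a$-set $A$ is matched outside the fixed $(a+s)$-set $A\cup S$ \emph{and} the multigraph induced on $A\cup S$ is simple.

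The core of the argument is a sharp enough bound on $p(a,s)$, obtained by exposing the partners of the $da$ stubs of $A$ one at a time. At each step all $d(N-s-a)$ stubs of $B$ are still free, so the conditional probability of landing inside $A\cup S$ is at most $d(a+s)/(dN-O(da))=(1+o(1))(a+s)/N$; moreover at least $\lceil d(a-s)/2\rceil$ of these matches are forced to be $A$--$A$ edges, and the simplicity requirement on $A\cup S$ caps how many of the remaining stubs any one vertex of $S$ can absorb. Carried out carefully this yields $p(a,s)\le\bigl(C(a+s)/N\bigr)^{\alpha(a,d)}$ with an exponent $\alpha(a,d)$ of order $da$. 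Combining with $\binom{N}{a}\le(eN/a)^a$ and $\binom{N}{s}\le N^{d-1}$, the bounded-$a$ terms form a convergent series of total size $O(1/N)$, while the growing-$a$ terms are killed by the exponential decay.

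I expect the genuine obstacle to be exactly this last calibration: making the decay of $p(a,s)$ outrun the entropy factor $\binom{N}{a}$, which is delicate in two regimes. For $a$ proportional to $N$ one needs the $d$-dependence of the exponent to beat $\binom{N}{a}\le 2^N$, and it is here that $d\ge 3$ is essential --- the statement is simply false for $d=2$, since a uniformly random $2$-regular graph contains a triangle component (hence a cut vertex) with probability bounded away from $0$. For small $a$ one must keep the simplicity conditioning in force, because the raw pairing-model probability of, say, a two-vertex separator is inflated by multi-edge configurations inside $A$; only after discarding those does the corresponding term become $o(1)$. Once the double sum is shown to tend to $0$, the reduction in the first paragraph finishes the proof.
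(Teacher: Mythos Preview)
The paper does not prove this theorem at all; it is stated with attribution to Bollob\'as and cited as \cite[Theorem~7.32]{bollobas1985}, then used as a black box to deduce Theorem~\ref{thm:regular-homog} from Theorem~\ref{thm:chipmunks}. So there is no ``paper's proof'' to compare against --- any argument you give is already more than what the paper supplies.

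Your sketch follows the standard route (configuration model, reduction to simple graphs via the $\exp(-(d^2-1)/4)$ limiting probability, union bound over small separators), which is essentially Bollob\'as's own method. The outline is sound, though the quantitative step --- showing that $p(a,s)\le(C(a+s)/N)^{\alpha(a,d)}$ with an exponent large enough to beat $\binom{N}{a}$ uniformly across the whole range of $a$ --- is exactly where the real work lies, and you have correctly flagged it rather than carried it out.

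One small correction: your remark that the statement is ``simply false for $d=2$'' is off in the paper's setting. The family $\G_{d,N}$ is defined as the set of \emph{connected} simple $d$-regular graphs, and a connected $2$-regular simple graph is a cycle, which is $2$-connected; so the $d=2$ case is trivially true here. (A triangle component has no cut vertex in any case.) Your intuition that $d=2$ behaves differently is right for the unconditioned pairing model, where the probability of connectivity itself tends to zero, but that is not the model in play.
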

Similarly, we can formulate the following statement.
\begin{theorem}\label{thm:regular-homog}
  Almost every $d$-regular graph is homogeneous.
\end{theorem}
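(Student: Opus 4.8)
The plan is to verify the feasible-partition characterization of homogeneity from Theorem~\ref{thm:homog-feasible-part-bound}: a graph $G=(V,E)$ is homogeneous if and only if $w(P)=|E_P|/(k_P-1)\ge |E|/(|V|-1)=\theta(G)$ for every feasible partition $P$. Writing $N=|V|$, a simple connected $d$-regular graph has $|E|=dN/2$, so the statement to prove reduces to showing that, for almost every $G\in\G_{d,N}$ and every feasible partition $P$ of $G$ with $k:=k_P\ge 2$ parts,
\begin{equation*}
|E_P| \ge (k-1)\,\theta(G) = (k-1)\,\frac{dN}{2(N-1)}.
\end{equation*}
The only probabilistic ingredient will be Bollob\'as' Theorem~\ref{thm:bolobas}, that almost every $d$-regular graph is $d$-connected in the vertex sense. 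Since $\kappa(G)\le\lambda(G)\le\delta(G)$ for any graph (vertex connectivity $\le$ edge connectivity $\le$ minimum degree) and $\delta(G)=d$ for a $d$-regular graph, a $d$-vertex-connected $d$-regular graph automatically has edge connectivity $\lambda(G)=d$ and is in particular connected. So it suffices to establish the displayed bound for every $d$-regular graph with $\lambda(G)=d$.

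Fix such a $G$ and a feasible partition $P=\{V_1,\dots,V_k\}$ with $k\ge 2$. I would pass to the quotient multigraph $G/P$, obtained by contracting each $V_i$ to a single vertex and keeping every crossing edge; it has $k$ vertices and exactly $|E_P|$ edges. For each $i$, the edges between $V_i$ and $V\setminus V_i$ form an edge cut of $G$ with both sides nonempty (here is where $k\ge 2$ is used), so there are at least $\lambda(G)=d$ of them; that is, every vertex of $G/P$ has degree at least $d$. Summing the degrees over the $k$ vertices yields $2|E_P|\ge dk$, i.e. $|E_P|\ge dk/2$. It then remains only to check the elementary inequality $dk/2\ge (k-1)\,dN/(2(N-1))$, which after clearing denominators becomes $k(N-1)\ge (k-1)N$, i.e. $N\ge k$ --- and $k\le N$ always holds. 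Hence $w(P)\ge\theta(G)$ for every feasible partition of $G$, and Theorem~\ref{thm:homog-feasible-part-bound} gives that $G$ is homogeneous.

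In this approach nearly everything is routine once Theorem~\ref{thm:bolobas} is in hand, so I expect the main (and essentially only) substantive step to be that appeal to Bollob\'as' theorem; the rest is elementary graph theory. The one point worth emphasizing is that the final reduction $dk/2\ge (k-1)\theta(G)$ is tight exactly at $k=N$ --- the partition into singletons, where $E_P=E$ --- which is why one needs the full strength $\lambda(G)=d$; weaker connectivity, such as mere biconnectivity (which gives only $|E_P|\ge k$), fails already for $d\ge 3$. An alternative route would instead bound $\sum_i e(G(V_i))$, the number of edges internal to the parts, using the expander mixing lemma together with the near-optimal spectral gap of random $d$-regular graphs; this also works, but it is longer and imports eigenvalue estimates, whereas the edge-connectivity argument is self-contained given the cited result.
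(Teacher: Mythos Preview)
Your argument is correct, and it takes a genuinely different route from the paper's. The paper also reduces to Bollob\'as' theorem, but it proves the deterministic implication ``$d$-regular and $d$-connected $\Rightarrow$ homogeneous'' (Theorem~\ref{thm:chipmunks}) via the deflation machinery: assuming $G$ is not homogeneous, it invokes Theorem~\ref{thm:component} to produce a proper homogeneous core $H$, uses $d$-vertex-connectivity to show the boundary $B\subset V_H$ has $|B|\ge d$, and then counts edges inside $H$ to force $\theta(H)\le d/2<\theta(G)$, contradicting $\eta^*|_H=\kappa\le\theta(G)^{-1}$. You instead go straight through the feasible-partition characterization (Theorem~\ref{thm:homog-feasible-part-bound}) and the Whitney inequality $\kappa(G)\le\lambda(G)\le\delta(G)$: edge connectivity $d$ gives $\deg_{G/P}(V_i)\ge d$ for every part, hence $|E_P|\ge dk/2\ge(k-1)\theta(G)$. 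Your approach is shorter and more elementary---it never touches $\rho^*$, $\eta^*$, or homogeneous cores---while the paper's proof illustrates the deflation theory that is the paper's main subject. Your remark that equality $dk/2=(k-1)\theta(G)$ occurs only at $k=N$ is a nice sanity check explaining why the full strength $\lambda(G)=d$ is needed.
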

Theorem \ref{thm:regular-homog} follows
from Theorem \ref{thm:bolobas} and the following theorem.
\begin{theorem}\label{thm:chipmunks}
  If $G$ is a simple graph that is $d$-regular and $d$-connected, then $G$ is homogeneous.
\end{theorem}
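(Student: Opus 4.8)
The plan is to verify the feasible-partition characterization of homogeneity in Theorem~\ref{thm:homog-feasible-part-bound}. Write $N=|V|$; since $G$ is $d$-regular, $|E|=dN/2$, so that theorem reduces the problem to showing that every feasible partition $P=\{V_1,\dots,V_k\}$ of $G$, with $k=k_P\ge 2$, satisfies
\[
\frac{|E_P|}{k-1}\ \ge\ \frac{|E|}{|V|-1}\ =\ \frac{dN}{2(N-1)}.
\]
The mechanism I would exploit is that $d$-connectivity forces every part of the partition to have a large edge boundary, and summing these boundaries bounds $|E_P|$ from below.

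First I would pass from vertex- to edge-connectivity: a $d$-vertex-connected graph is $d$-edge-connected (Whitney's inequality $\kappa(G)\le\lambda(G)$; or directly, Menger's theorem supplies $d$ edge-disjoint paths between any two vertices, so every edge cut has at least $d$ edges). For each part $V_i$ let $b_i$ denote the number of edges of $G$ with exactly one endpoint in $V_i$. Since $k\ge 2$, each $V_i$ is a proper nonempty subset of $V$, so edge-connectivity yields $b_i\ge d$. Every edge of $E_P$ joins two distinct parts and is therefore counted in exactly two of the quantities $b_i$, so
\[
2|E_P|\ =\ \sum_{i=1}^{k} b_i\ \ge\ kd,
\]
and hence $|E_P|\ge kd/2$.

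It then remains to close with an elementary estimate: from the last display,
\[
\frac{|E_P|}{k-1}\ \ge\ \frac{kd}{2(k-1)}\ =\ \frac{d}{2}\cdot\frac{k}{k-1},
\]
and since a partition of $V$ into nonempty parts has at most $N$ parts we have $k\le N$, so $k/(k-1)\ge N/(N-1)$ because $x\mapsto x/(x-1)$ is decreasing. Combining the two inequalities gives $\frac{|E_P|}{k-1}\ge\frac{dN}{2(N-1)}$, which is exactly~\eqref{eq:chopra-adm}, and Theorem~\ref{thm:homog-feasible-part-bound} concludes that $G$ is homogeneous. I do not expect a serious obstacle here; the one place where care is needed is the passage from vertex- to edge-connectivity, which is what makes the bound $b_i\ge d$ legitimate even though $V_i$ need not form one side of a minimum cut, together with tracking the direction of the inequality $k\le N$. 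It is worth observing that this argument never uses the connectedness of the induced subgraphs $G(V_i)$, so in fact~\eqref{eq:chopra-adm} holds for every partition of $V$ into two or more parts, not only the feasible ones.
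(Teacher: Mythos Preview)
Your argument is correct and takes a genuinely different route from the paper's. The paper proceeds by contradiction through the deflation machinery: assuming $G$ is not homogeneous, it invokes Theorem~\ref{thm:component} to produce a proper homogeneous core $H$, uses $d$-vertex-connectivity to show that the boundary $B\subset V_H$ has at least $d$ vertices, and then counts degrees inside $H$ to obtain $2|E_H|\le d(|V_H|-1)$, which forces $\kappa=(|V_H|-1)/|E_H|\ge 2/d$, contradicting the earlier bound $\kappa<2/d$. Your approach instead verifies the feasible-partition criterion of Theorem~\ref{thm:homog-feasible-part-bound} directly, passing from vertex- to edge-connectivity and bounding each part's edge boundary by~$d$. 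This is shorter, avoids contradiction, and does not need to treat $d=2$ separately; it also never uses that $G$ is simple (whereas the paper's argument does, in the step bounding $|E_H|$ by $\binom{|V_H|}{2}$), so in fact you have shown that any $d$-regular, $d$-edge-connected multigraph is homogeneous. The paper's proof, on the other hand, illustrates the utility of the homogeneous-core decomposition that is the central theme of Section~\ref{sec:deflation}.
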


\begin{proof}
A $2$-regular, $2$-connected graph is a cycle. See Example~\ref{ex:homog-unif} for a proof that all cycles are homogeneous.
Therefore, we can assume that $d\geq3$. We proceed by contradiction. Suppose $G=(V_G, E_G)$ is $d$-regular and $d$-connected, but not homogeneous. By Theorem~\ref{thm:component} there exists a homogeneous core $H\subsetneq G$.  Moreover, if $\eta^*$ is the optimal density
for $\Mod_{2}(\widehat{\Ga_G})$, then 
\begin{equation*}
\eta^*(e)=\kappa=\min_{e'\in E}\eta^*(e')\qquad\forall e\in E_H.
\end{equation*}
Since $H$ is itself a homogeneous graph, Corollary~\ref{cor:homog-pmf} implies that the extremal density $\eta_H^*$ for $\Mod_2(\widehat{\Ga_H})$ is constant and equal to $\frac{|V_H|-1}{|E_H|}$.
Finally, by part 2 of Theorem~\ref{thm:serial-shrunk},
\begin{equation}\label{eq:kappa-etah}
\eta_H^*(e)=\eta^*(e)=\kappa=\frac{|V_H|-1}{|E_H|}\qquad \forall e\in E_H.
\end{equation}

Next, observe that Lemma~\ref{lem:avg-eta} together with~\eqref{eq:theta-vs-avg-deg} implies that
\begin{equation}\label{eq:kappa-bound}
\kappa=\min_{e\in E}\eta^*(e)\le\mathbb{E}(\eta^*)=\frac{|V_G|-1}{|E_G|} = \theta(G)^{-1} < \frac{2}{d}.
\end{equation}

Now define $B\subset V_H$ as the set of nodes in $H$ that are adjacent to some node in $V_G\setminus V_H$.  That is, $B$ is the set of ``boundary'' nodes for $H$ in $G$.  Note that $B\neq\emptyset$, since $G$ is connected and $H\subsetneq G$.
We now show that $B$ cannot be too small. Namely, we claim that
\begin{equation}\label{eq:boundvertices}
|B|\geq d.
\end{equation}

There are two cases to consider. First, assume that $|V_{H}\setminus B|>0$.
In order for $H$ (and therefore $G$) to be $d$-connected, there have to be at least $d$ nodes in $B$. Otherwise, $G$ could be disconnected by removing $B$, contradicting $d$-connectedness.

Now assume that $|V_{H}\setminus B|=0$. By Theorem~\ref{thm:component}, $|E_H|>0$, and $H$ is connected. Suppose $|B|=|V_H|<d$. Then, by (\ref{eq:kappa-etah}) and (\ref{eq:kappa-bound}), and the fact that $H$ is a simple graph,
\begin{equation*}
\frac{2}{d}>\kappa  =\frac{|V_H|-1}{|E_H|}
 \ge \frac{|V_H|-1}{\left(\genfrac{}{}{0pt}{}{|V_H|}{2}\right)}=\frac{2}{|V_H|}>\frac{2}{d},
\end{equation*}
which is a contradiction. Therefore, $|B|\ge d$ in this case as well and (\ref{eq:boundvertices}) is proved.

Now we estimate the number of edges in $H$ using the fact that the degree (in $H$) of nodes in $V_H\setminus B$ is equal to $d$, while for nodes in $B$ it is at most equal to $d-1$, since the latter are adjacent to at least one node in $V_G\setminus V_H$.  So,
\begin{align*}
    2|E_H| & \leq  (d-1)|B|+d|V_{H}\setminus B|\\
     & =  (d-1)|B|+d(|V_H|-|B|) \\
     & =  d|V_H|-|B| \\
     & \le d|V_H|-d &\text{(by (\ref{eq:boundvertices}))}\\
     & = d(|V_H|-1) \\
\end{align*}
Thus, by (\ref{eq:kappa-etah}) again
\[
\kappa=\frac{|V_H|-1}{|E_H|}\geq\frac{2}{d},
\]
which contradicts (\ref{eq:kappa-bound}). So $G$ must be homogeneous.
\end{proof}
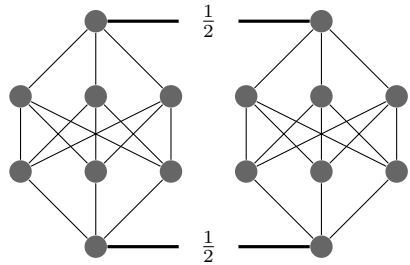
\begin{figure}[H]
\centering
\begin{tikzpicture}[main_node/.style={circle,fill=black!60,minimum size=0.5em,inner sep=3pt]}]

    \node[main_node] (1) at (0, 0) {};
    \node[main_node] (2) at (-1,1) {};
    \node[main_node] (3) at (0,1) {};
    \node[main_node] (4) at (1,1) {};
    \node[main_node] (5) at (-1,2) {};
    \node[main_node] (6) at (0,2) {};
    \node[main_node] (7) at (1,2) {};
    \node[main_node] (8) at (0,3) {};

    \node[main_node] (9) at (3,0) {};
    \node[main_node] (10) at (2,1) {};
    \node[main_node] (11) at (3,1) {};
    \node[main_node] (12) at (4,1) {};
    \node[main_node] (13) at (2,2) {};
    \node[main_node] (14) at (3,2) {};
    \node[main_node] (15) at (4,2) {};
    \node[main_node] (16) at (3,3) {};

    \draw (8) -- (6) -- (3) -- (7) -- (4) -- (6) -- (2) -- (5) -- (3) -- (1);
    \draw (1) -- (2) -- (7) -- (8) -- (5) -- (4) -- (1);

    \draw (16) -- (14) -- (11) -- (15) -- (12) -- (14) -- (10) -- (13) -- (11) -- (9);
    \draw (9) -- (10) -- (15) -- (16) -- (13) -- (12) -- (9);

\begin{scope}[>={[black]},
              every node/.style={fill=white,circle},
              every edge/.style={draw=black,very thick}]
    \path [-] (1) edge node {$\frac{1}{2}$} (9);
    \path [-] (8) edge node {$\frac{1}{2}$} (16);
\end{scope}

\end{tikzpicture}
\caption{A bi-connected, 4-regular graph. $\eta^*=0.5$ on the two labeled edges.  The unlabeled edges have $\eta^*\approx 0.467$.}\label{fig:reg-nonhomog}
\end{figure}

On the other hand, there are arbitrarily large $d$-regular graphs that
are not homogeneous.  Figure~\ref{fig:reg-nonhomog} shows a biconnected
example that can be generalized to arbitrarily large $d$ and $N$.
The graph contains two copies of the complete bipartite graph $K_{3,3}$, and can be generalized by replacing the two copies of $K_{3,3}$ with two copies of $K_{n,n}$ with $n\geq 3$. The result is an $(n+1)$-regular and biconnected graph, which is not homogeneous, because the two edges connecting the larger pieces will be used more frequently than other edges.

Moreover, it does not seem to be the case that a $d$-regular graph is
homogeneous only if it is $d$-connected.  For instance, if the $K_{3,3}$ subgraphs in Figure~\ref{fig:reg-nonhomog} are replaced by $K_{2,2}$, then the resulting graph is $3$-regular and homogeneous, but only $2$-connected.

\subsection{Uniform graphs revisited}

Here we use Theorem~\ref{thm:component}, to prove our earlier assertion that biconnected uniform graphs are homogeneous.

\begin{theorem}
  \label{thm:bicon-uniform}
Let $G=(V,E)$ be a graph.  If $G$ is a biconnected and uniform, then $G$ is homogeneous.
\end{theorem}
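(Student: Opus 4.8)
The plan is to work directly with the optimal density $\rho^*$ for $\Mod_2(\Gamma_G)$ and to show that biconnectedness, combined with uniformity, forces $\rho^*$ to be constant; by Definition~\ref{def:homog-unif} (and~\eqref{eq:rho-vs-eta-kkt}) this is exactly the statement that $G$ is homogeneous. The starting observation is that, since $G$ is uniform, the pmf $\mu_0$ from~\eqref{eq:mu-0} is optimal for $\MEO(\Gamma_G)$, and crucially $\mu_0(\gamma) = 1/|\Gamma_G| > 0$ for \emph{every} spanning tree $\gamma$. Applying Theorem~\ref{thm:rho-eta-mu-kkt} to the triple $(\rho^*,\eta^*,\mu_0)$, the complementary slackness condition~\eqref{eq:comp-slack-kkt} then upgrades to the strong conclusion $\ell_{\rho^*}(\gamma) = 1$ for \emph{every} $\gamma\in\Gamma_G$, not merely for the $\rho^*$-minimal trees.

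Next I would argue by contradiction. Suppose $G$ is not homogeneous, so $\rho^*$ is not constant and there exist edges $e_0,f_0\in E$ with $\rho^*(e_0)<\rho^*(f_0)$. Since $G$ is biconnected, $e_0$ and $f_0$ lie on a common cycle $C$ (standard for a biconnected graph on at least three vertices; the remaining cases — a single edge, or two vertices joined by parallel edges — are immediate, with the single-edge case never arising since it is homogeneous). Because $f_0\in C$ it is not a bridge, so the acyclic edge set $C\setminus\{f_0\}$ extends to a spanning tree $\gamma$ of $G$ with $f_0\notin\gamma$; note $e_0\in\gamma$. Adding $f_0$ back to $\gamma$ creates exactly one cycle, which contains $C$ and hence equals $C$, so in particular $e_0$ lies on it and $\gamma' := \gamma - e_0 + f_0$ is again a spanning tree. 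Using $\ell_{\rho^*}(\cdot)=1$ for both $\gamma$ and $\gamma'$ gives
\[
1 = \ell_{\rho^*}(\gamma') = \ell_{\rho^*}(\gamma) - \rho^*(e_0) + \rho^*(f_0) = 1 - \rho^*(e_0) + \rho^*(f_0),
\]
hence $\rho^*(e_0)=\rho^*(f_0)$, a contradiction. Therefore $\rho^*$ is constant and $G$ is homogeneous.

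The only delicate point is the cycle-and-extension construction: one must be sure the spanning tree $\gamma$ can be chosen to contain $C\setminus\{f_0\}$ while avoiding $f_0$, and that the exchange of $e_0$ for $f_0$ preserves the spanning-tree property — both are elementary matroid-exchange facts once a common cycle through $e_0$ and $f_0$ is available, which is precisely where biconnectedness enters. I note that this route does not literally invoke Theorem~\ref{thm:component}; alternatively one can first apply Theorem~\ref{thm:component} to extract a homogeneous core $H\subsetneq G$, on whose edges $\eta^*$ (hence $\rho^*$) attains its minimum while being strictly larger on some incident edge, and then run the identical exchange argument on an edge of $H$ and an incident edge of $E_G\setminus E_H$ lying on a common cycle of the biconnected graph $G$; this is presumably the intended use of Theorem~\ref{thm:component}.
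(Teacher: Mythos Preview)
Your proof is correct and takes a genuinely different route from the paper's. The paper argues by invoking Theorem~\ref{thm:component} to produce a homogeneous core $H\subsetneq G$, then uses biconnectedness to build a spanning tree whose restriction to $H$ is disconnected (a path that leaves $H$ and re-enters it, grown to a tree); since every tree lies in the support of $\mu_0$, this forbidden tree contradicts optimality of $\mu_0$. You instead bypass the core machinery entirely: from complementary slackness you extract $\ell_{\rho^*}(\gamma)=1$ for \emph{all} trees, and then a single matroid exchange along a common cycle (which is where biconnectedness enters) forces $\rho^*$ to be constant. Your argument is shorter and self-contained, isolating precisely the consequence of uniformity that matters---every tree is $\rho^*$-minimal, so the swap inequality in Lemma~\ref{lem:mu-star-stability} becomes an equality in both directions. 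The paper's route, by contrast, fits the narrative of Section~\ref{sec:applications} as an application of deflation and illustrates the restriction property of cores in action. One small phrasing point: when you say the fundamental cycle of $f_0$ ``contains $C$ and hence equals $C$,'' the cleaner statement is that $\gamma+f_0$ contains $C$ as a subgraph and has a unique cycle, so that cycle must be $C$; the logic is sound either way.
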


\begin{proof}
  We proceed by contradiction.  Suppose that $G$ is a biconnected, nonhomogeneous, uniform graph.  Theorem~\ref{thm:component} then guarantees the
  existence of a connected homogeneous component $H$.  Let $e\in E_G$ be an edge that connects a node $x\in V_H$ to a node $y\in V_G\setminus V_H$. Choose $z\in V_H\setminus\{x\}$. (Such a node exists by Properties~1  and~3 of
  Theorem~\ref{thm:component}.)  Since $G$ is biconnected, there must
  exist a simple path in $G$ from $y$ to $z$ that does not visit $x$.
  Prepending $e$ to this path yields a simple path $\omega$ between $x$ and $z$ that
  exits $H$. By adding edges one at a time (e.g., as in Kruskal's algorithm) we can iteratively grow this path $\omega$ into a spanning tree $\ga$ of $G$. Thus, there exists at least one spanning tree
  of $G$ that includes $\omega$. By Theorem~\ref{thm:component}, every fair tree must restrict to a tree on $H$. Therefore, $\ga$ is a forbidden tree (see Definition \ref{def:fair-forbidden}). However, the uniform pmf $\mu_0$ defined in~\eqref{eq:mu-0} has every spanning tree of $\Ga_G$ in its support. Since the support of $\mu_0$ contains a forbidden tree, it cannot be  optimal, contradicting the assumption that $G$ is uniform.
\end{proof}

\section*{Acknowledgments}

The authors are grateful to Professors Marianne Korten and David Yetter at
Kansas State University for organizing an exceptional summer undergraduate
research program that led to several of the results presented in this paper.

\bibliographystyle{acm}
\bibliography{spt_modulus,pmodulus,myrefs}
\def\cprime{$'$}

\end{document}